\title{Sur l'hyperbolicit\'e de graphes associ\'es au groupe de Cremona}
\author{Anne Lonjou}
\institution{Universit\"at Basel, Departement Mathematik und
Informatik, Spiegelgasse 1,
4051 Basel, Switzerland}\\
\email{anne.lonjou@unibas.ch}} }
\date{\vspace{-5ex}} 
\journal{\'Epijournal de G\'eom\'etrie Alg\'ebrique} 
\newdimen\origiwspc
\font
\numberwithin{equation}{section}
\renewcommand{\p@equation}{\arabic{section}.\arabic{equation}\expandafter\@gobble}
\setlist[enumerate,1]{label={\rm \arabic*)}}
\tikzset{
    typeone/.style={circle,draw=black,very thick,inner sep=0pt,minimum size=7mm},
    typetwo/.style={circle,draw=black,fill=black,thin,inner sep=0pt,minimum size=3mm},
    typethree/.style={rectangle,draw=black,fill=black,thin,inner sep=0pt,minimum size=6mm}
} \tikzstyle directed=[postaction={decorate,decoration={markings,
mark=at position .55 with {\arrow{stealth}}}}]
\newtheorem{thm}[equation]{Th\'eor\`eme}
\newtheorem{prop}[equation]{Proposition}
\newtheorem{lemme}[equation]{Lemme}
\newtheorem{cor}[equation]{Corollaire}
\newtheorem{maintheorem}{Th\'eor\`eme}
\newtheorem{defi}[equation]{D\'efinition}
\newtheorem{rmq}[equation]{Remarque}
\newtheorem{fait}[equation]{Fait}
\newcommand{\B}{\mathcal{B}}
\renewcommand{\D}{\mathcal{A}}
\newcommand{\F}{\mathbb{F}}
\newcommand{\Fl}{F}
\newcommand{\G}{\mathcal{G}}
\newcommand{\GC}{\mathcal{GC}}
\renewcommand{\j}{\mathfrak{j}}
\newcommand{\N}{\mathbb{N}}
\newcommand{\Ne}{\mathcal{N}}
\renewcommand{\P}{\mathbb{P}}
\newcommand{\q}{\mathfrak{q}}
\newcommand{\R}{\mathbb{R}}
\newcommand{\V}{\mathcal{V}}
\newcommand{\Z}{\mathbb{Z}}
\newcommand{\W}{\mathcal{W}}
\newcommand{\isome}[1]{#1_{{\scriptscriptstyle{\#}}}}
\newcommand{\orbl}{\mathcal{E}}
\newcommand{\Som}{\mathcal{S}}
\renewcommand{\H}{\mathbb{H}^{\infty}}
\renewcommand{\epsilon}{\varepsilon}
\renewcommand{\l}{\ell}
 \DeclareMathOperator{\Bir}{Bir}
\DeclareMathOperator{\PGL}{PGL} \DeclareMathOperator{\PSL}{PSL}
 \DeclareMathOperator{\CAT}{CAT}
\DeclareMathOperator{\PM}{\mathcal{Z}} \DeclareMathOperator{\Bs}{Bs}
 \DeclareMathOperator{\Mod}{Mod}
\DeclareMathOperator{\NS}{N^1} \DeclareMathOperator{\kk}{k}
\DeclareMathOperator{\can}{K} 
\DeclareMathOperator{\card}{Card}
\DeclareMathOperator{\diam}{Diam} 
\DeclareMathOperator{\dist}{d} 
 \DeclareMathOperator{\id}{id}
\DeclareMathOperator{\jonq}{j_{onq}} \DeclareMathOperator{\Out}{Out}
\DeclareMathOperator{\lgueur}{\l g} 
 \DeclareMathOperator{\md}{\#_{md}}
\DeclareMathOperator{\argcosh}{argcosh}
\DeclareMathOperator{\supp}{supp} 
 \DeclareMathOperator{\Pic}{Pic}
\begin{document}


\maketitle



\begin{prelims}

\vspace{-0.55cm}

\def\abstractname{R\'esum\'e}
\abstract{Poursuivant l'analogie d\'ej\`a existante entre le groupe modulaire et le groupe de Cremona de rang $2$ sur un corps
alg\'ebriquement clos, nous cherchons un graphe sur lequel le groupe
de Cremona agit de fa\c{c}on non triviale et qui est un analogue du
graphe des courbes. Un candidat naturel est un graphe introduit par
D. Wright. Cependant, r\'epondant du m\^eme coup \`a une question de
A. Minasyan et D. Osin, nous montrons que ce graphe n'est pas
Gromov-hyperbolique. Nous construisons ensuite deux graphes
associ\'es au pavage de Vorono\"{\i} du groupe de Cremona, introduit
dans un pr\'ec\'edent travail de l'auteur. Nous montrons que l'un
est quasi-isom\'etrique au graphe de Wright. Nous prouvons que le
second, quant \`a lui, est hyperbolique.}

\motscles{Groupe de Cremona, espace hyperbolique, graphe
Gromov-hyperbolique, graphe de Cayley, pavage de Vorono\"{\i}}

\MSCclassfr{14E07; 20F65}


\languagesection{English}{%

\vspace{-0.05cm} \textbf{Title. On the hyperbolicity of graphs
associated to the Cremona group} \commentskip \textbf{Abstract.} To
reinforce the analogy between the mapping class group and the
Cremona group of rank $2$ over an algebraic closed field, we look
for a graph analogous to the curve graph and such that the Cremona
group acts on it non-trivially. A candidate is a graph introduced by
D. Wright. However, we demonstrate that it is not Gromov-hyperbolic.
This answers a question of A. Minasyan and D. Osin. Then, we
construct two graphs associated to a Vorono\"{\i} tessellation of the
Cremona group introduced in a previous work of the author. We show
that one is quasi-isometric to the Wright graph. We prove that the
second one is Gromov-hyperbolic.}

\end{prelims}


\newpage

\setcounter{tocdepth}{1} \tableofcontents

\bigskip

\section*{Introduction}
\addcontentsline{toc}{section}{Introduction} Dans cet article, le
corps de base, not\'e $\kk$, est alg\'ebriquement clos (sauf mention
du contraire). Les surfaces consid\'er\'ees sont projectives et
lisses. Les graphes \'etudi\'es sont munis de la m\'etrique standard
qui rend chaque ar\^ete isom\'etrique \`a l'intervalle ferm\'e
r\'eel $[0,1]$.

Nous nous int\'eressons au groupe de Cremona de rang $2$ sur un
corps $\kk$, not\'e $\Bir(\P^2)$, qui est le groupe des
transformations birationnelles du plan projectif (pour all\'eger
l'\'ecriture nous notons le plan projectif $\P^2$ au lieu de
$\P^2_{\kk}$). Il existe des analogies entre le groupe de Cremona,
le groupe $\PSL(2,\Z)$ et le groupe modulaire $\Mod(\Sigma_g)$ d'une
surface compacte de genre $g\geq 2$ (le groupe des hom\'eomorphismes
de cette surface pr\'eservant l'orientation \`a homotopie pr\`es).
Les groupes $\PSL(2,\Z)$ et $\Mod(\Sigma_g)$ agissent sur des
espaces combinatoires qui sont Gromov-hyperboliques, c'est-\`a-dire
des espaces combinatoires dont les triangles sont uniform\'ement
fins. Le groupe $\PSL(2,\Z)$ agit sur l'arbre de Bass-Serre
associ\'e au scindement $\PSL(2,\Z)=\Z/2\Z \ast \Z/3\Z$. Le groupe
$\Mod(\Sigma_g)$ agit sur le complexe des courbes dont les sommets
sont les classes d'isotopies de lacets simples et essentiels sur
$\Sigma_g$ et dont les simplexes de dimension $n$ sont donn\'es par
$n+1$ classes d'isotopies de lacets simples et essentiels de
$\Sigma_g$ admettant des repr\'esentants deux \`a deux disjoints.
Ces deux groupes ont \'egalement \'et\'e une source d'inspiration
pour l'\'etude du groupe $\Out(\Fl_n)$, le groupe des automorphismes
ext\'erieurs du groupe libre \`a $n$ g\'en\'erateurs (le quotient du
groupe des automorphismes de $\Fl_n$ par le sous-groupe distingu\'e
des automorphismes int\'erieurs de $\Fl_n$). Ainsi, des complexes
simpliciaux sur lesquels le groupe $\Out(\Fl_n)$ agit de fa\c{c}on
non triviale et \'etant candidats \`a \^etre analogue au complexe
des courbes, ont \'et\'e construits (voir
\cite{Best_Feighn_free_factors} et \cite{HanMosh}).

Nous nous proposons de faire de m\^eme pour le groupe de Cremona. Le
but de cet article est de construire un graphe Gromov-hyperbolique
sur lequel le groupe de Cremona agit de fa\c{c}on non triviale.
\subsubsection*{Graphe de Wright}

Le complexe de Wright \cite{W} est un complexe simplicial, de
dimension $2$ et simplement connexe sur lequel le groupe de Cremona
agit. Il est obtenu en utilisant la structure de produit amalgam\'e
sur trois facteurs du groupe de Cremona sur un corps
alg\'ebriquement clos. Comme nous nous int\'eressons \`a la
propri\'et\'e de Gromov-hyperbolicit\'e, seul le $1$-squelette nous
int\'eresse. Nous montrons que ce graphe non localement fini est de
diam\`etre infini (Corollaire \ref{cor_Wright_infini}), ce qui
n'\'etait pas \'evident a priori. Nous nous demandons ensuite si
ce graphe est Gromov-hyperbolique. Cette question a \'et\'e pos\'ee
par A. Minasyan et D. Osin dans \cite[Problem 8.5]{MO}. Leur
motivation \'etait, dans le cas o\`u la r\'eponse est positive, de
trouver une nouvelle mani\`ere de montrer que le groupe de Cremona
n'est pas simple en utilisant les r\'esultats de \cite{DGO}. En
fait, la r\'eponse \`a cette question est n\'egative.
\begin{maintheorem}\label{main_thm_Wright_nonhyperbolique}
Le graphe de Wright n'est pas hyperbolique au sens de Gromov.
\end{maintheorem}

Le premier point dans la preuve est de remarquer que le graphe de
Wright est quasi-isom\'etrique \`a un graphe li\'e au syst\`eme de
g\'en\'erateurs du groupe de Cremona donn\'e par $\PGL(3,\kk)$ et
les applications de Jonqui\`eres. C'est un analogue du graphe de
Cayley dans le cas d'un groupe de type fini. Nous appelons ce graphe
\og le graphe de Wright modifi\'e \fg{}. Les sommets de ce graphe
sont les \'el\'ements du groupe de Cremona modulo pr\'e-composition
par un \'el\'ement de $\PGL(3,\kk)$. Une ar\^ete relie deux sommets
s'il existe une application de Jonqui\`eres envoyant un sommet sur
l'autre. Ici, une application de Jonqui\`eres est une application
pr\'eservant un pinceau de droites. La distance entre deux sommets
$f,g\in\Bir(\P^2)$ correspond au nombre minimal d'applications de
Jonqui\`eres qu'il faut pour d\'ecomposer l'application $g^{-1}\circ
f$. Dans \cite{BlF}, J. Blanc et J-P. Furter appellent cet entier
\og longueur de translation \fg{} de l'application $g^{-1}\circ f$.
Ils d\'eterminent un algorithme pour d\'eterminer cette longueur.
\`A noter qu'ils obtiennent eux aussi que le graphe de Wright est de
diam\`etre infini. Cependant la preuve pr\'esent\'ee ici est plus
\'el\'ementaire car elle ne requiert pas la notion de longueur d'une
application. Une cons\'equence de leur th\'eor\`eme principal
(\cite[Corollaire 3.29]{BlF}) est que l'algorithme propos\'e dans
\cite{AC} donne aussi le nombre minimal de Jonqui\`eres.

Le second point est de montrer que ce graphe contient un sous-graphe
quasi-isom\'etrique \`a $\Z^2$ (Th\'eor\`eme \ref{thm_qi_gc_z2}).
Nous utilisons pour cela deux \'el\'ements du groupe de Cremona,
appel\'es twists de Halphen, qui sont d'ordre infini et qui
commutent. Ils engendrent un sous-groupe isomorphe \`a $\Z^2$. Il
reste encore \`a montrer que l'action de ce sous-groupe sur un des
sommets du graphe induit le sous-graphe recherch\'e. Nous utilisons
pour cela des r\'esultats de J. Blanc et J-P. Furter. \`A noter que
le graphe de Cayley du groupe $\Mod(\Sigma_g)$ n'est pas
Gromov-hyperbolique car ce groupe poss\`ede \'egalement des
sous-groupes isomorphes \`a $\Z^2$, engendr\'es par exemple par deux
twists de Dehn le long de deux lacets disjoints.

\subsubsection*{Graphes associ\'es au pavage de Vorono\"{\i}}
Un espace tr\`es utile dans l'\'etude du groupe de Cremona est
l'espace hyperbolique de dimension infinie, not\'e $\H$, qui est un
analogue du mod\`ele de l'hyperbolo\"{\i}de de l'espace hyperbolique
de dimension $n$. Il vit dans l'espace de Picard-Manin qui est le
compl\'et\'e $\l^2$ de la limite inductive des groupes de
N\'eron-Severi, \`a coefficients r\'eels, des surfaces dominant
$\P^2$. L'espace de Picard-Manin est muni d'une forme bilin\'eaire
de signature $(1,\infty)$ qui provient de la forme d'intersection.
L'espace $\H$ est la nappe sup\'erieure de l'hyperbolo\"{\i}de
d\'efini comme le niveau $1$ de la forme bilin\'eaire.

Les graphes \'etudi\'es dans cet article sont naturellement
associ\'es au pavage de Vorono\"{\i} construit dans un pr\'ec\'edent
article \cite{Lovoronoi1}. Nous rappelons dans un premier temps la
construction de ce pavage ainsi que les r\'esultats que nous
utilisons par la suite. Dans \cite{Lovoronoi1}, nous nous sommes
int\'eress\'es \`a un sous-domaine convexe de $\H$, not\'e $\orbl$,
et contenant l'enveloppe convexe de l'orbite de la classe de la
droite $\l\in\H$ sous l'action du groupe de Cremona. Nous avons
construit un domaine fondamental pour l'action du groupe de Cremona
modulo $\PGL(3,\kk)$ sur $\orbl$. L'outil cl\'e est la notion de
cellules de Vorono\"{\i} associ\'ees \`a cette orbite discr\`ete de
points qui d\'ecoupent l'espace $\orbl$ en zones d'influence
d\'etermin\'ees par les points de l'orbite. \`A chaque point de
l'orbite $\isome{f}(\l)$, nous associons la cellule donn\'ee par :
\[\V(f)=\{c\in\orbl\mid \text{ pour tout } g\in\Bir(\P^2),\
\dist(c,\isome{f}(\l))\leq \dist(c,\isome{g}(\l))   \}.\] Remarquons
que pour tout $a\in\PGL(3, \kk)$, $f$ et $f\circ a$ agissent
similairement sur $\l$ et par cons\'equent d\'eterminent la m\^eme
cellule de Vorono\"{\i}. Ces applications sont appel\'ees les germes
de la cellules $\V(f)$.

Nous avons \'egalement \'etudi\'e la g\'eom\'etrie de ce pavage en
d\'eterminant notamment les cellules non disjointes de la cellule
$\V(\id)$ appel\'ees \og cellules adjacentes \fg{}. Les germes de
telles cellules sont de deux types. Ceux qui sont de
\emph{caract\'eristique Jonqui\`eres}, c'est-\`a-dire les
applications du groupe de Cremona telles qu'il existe deux points
$p$ et $q$ dans $\P^2$ et qui envoient le pinceau de droites passant
par le point $p$ sur le pinceau de droites passant par le point $q$.
L'autre type de germe des cellules adjacentes \`a la cellule
$\V(\id)$ est formé des applications du groupe de Cremona qui poss\`edent
au plus huit points-base en position presque g\'en\'erale. Un
ensemble de points $\{p_0,p_1,\dots, p_r\}$ est dit \emph{en
position presque g\'en\'erale} si d'une part pour chaque $0\leq
i\leq r$ le point $p_i$ vit soit dans $\P^2$ soit dans une surface
dominant $\P^2$ qui est obtenue en \'eclatant seulement un
sous-ensemble de points de $\{p_0,\dots,p_r\}$ et si d'autre part
aucune des trois conditions suivantes n'est satisfaite : quatre des
points de cet ensemble sont align\'es, sept des points de cet
ensemble sont sur une conique, deux des points de cet ensemble sont
adh\'erents \`a un troisi\`eme point de cet ensemble (c-\`a-d ils
appartiennent \`a la transform\'ee stricte du diviseur obtenu en
\'eclatant un point de cet ensemble).

\begin{thm}[{\cite[Corollaire 4.7]{Lovoronoi1}}]\label{cor_cellule_adjacente}
L'ensemble des germes des cellules adjacentes \`a la cellule
$\V(\id)$ est constitu\'e de toutes :
\begin{enumerate}
\item[$\bullet$] les applications de caract\'eristique Jonqui\`eres,
\item[$\bullet$] les applications qui poss\`edent au plus $8$ points-base en position presque g\'en\'erale.
\end{enumerate}
\end{thm}

Dans \cite{Lovoronoi1}, les cellules partageant une classe dans le
bord \`a l'infini avec une cellule donn\'ee sont appel\'ees \og
cellules quasi-adjacentes \fg{}. Dans ce cas, le r\'esultat est :
\begin{thm}[{\cite[Corollaire 5.20]{Lovoronoi1}}]\label{cor_cellules_quasi_adjacentes}
L'ensemble des germes des cellules quasi-adjacentes \`a la cellule
$\V(\id)$ est constitu\'e de toutes :
\begin{enumerate}
\item[$\bullet$] les applications de caract\'eristique Jonqui\`eres,
\item[$\bullet$] les applications qui poss\`edent au plus $9$ points-base en position presque g\'en\'erale.
\end{enumerate}
\end{thm}

\`A partir de cette \'etude du pavage de Vorono\"{\i}, nous
construisons dans cet article un premier graphe appel\'e \og graphe
d'adjacence \fg{}. Les sommets de ce graphe sont les centres des
cellules et il y a une ar\^ete entre deux sommets lorsque les
cellules correspondantes sont adjacentes. Nous obtenons une
mani\`ere de retrouver le graphe de Wright dans $\H$ puisque nous
montrons que le graphe d'adjacence et le graphe de Wright sont
quasi-isom\'etriques (Proposition \ref{prop_qi_graphes}). La
Gromov-hyperbolicit\'e \'etant une propri\'et\'e stable par
quasi-isom\'etrie, cela implique que le graphe d'adjacence n'est pas
Gromov-hyperbolique.

Un autre graphe naturel \`a consid\'erer est le \og graphe de
quasi-adjacence\fg. Il est construit en consid\'erant la
combinatoire des cellules de Vorono\"{\i} \`a l'infini. Le graphe de
quasi-adjacence poss\`ede les m\^emes sommets que le graphe
d'adjacence. Nous mettons une ar\^ete entre deux sommets lorsque les
cellules associ\'ees sont quasi-adjacentes.

\begin{rmq}\label{rmq_adjacent_sousgraphe_quasi}
{\rm Comme deux cellules qui ont une intersection non vide
poss\`edent \'egalement une classe commune \`a l'infini
(\cite[Corollaire 5.6]{Lovoronoi1}), le graphe de quasi-adjacence
est en fait obtenu en ajoutant des ar\^etes au graphe d'adjacence.}
\end{rmq}

Nous montrons que ce graphe est toujours de diam\`etre infini et
qu'il est Gromov-hyperbolique en utilisant un crit\`ere d\^u \`a B.
Bowditch \cite{Bow} et en utilisant la Gromov-hyperbolicit\'e de
l'espace ambiant $\orbl$. \`A noter que l'espace $\H$ et le graphe
de quasi-adjacence ne sont pas quasi-isom\'etriques, tout comme le
demi-plan de Poincar\'e et l'arbre de Bass-Serre associ\'e \`a
$\PSL(2,\Z)$ ne sont pas quasi-isom\'etriques.
\begin{maintheorem}
Le graphe de quasi-adjacence est hyperbolique au sens de Gromov.
\end{maintheorem}

De ce point de vue, le graphe de quasi-adjacence est analogue \`a
l'arbre de Bass-Serre, au graphe des courbes ou encore \`a certains
complexes associ\'es au groupe $\Out(F_n)$ comme le complexe des
facteurs libres (\cite{Best_Feighn_free_factors}) ou le complexe des
scindements libres (\cite{HanMosh}). Ajouter des ar\^etes au graphe
d'adjacence a rendu Gromov-hyperbolique un graphe non
Gromov-hyperbolique. Cette construction est similaire \`a celle du
complexe des scindements libres qui peut \^etre obtenu en ajoutant
des simplexes \`a l'outre-espace, dans le cadre du groupe
$\Out(\Fl_n)$. Le complexe des scindements libres est
Gromov-hyperbolique (\cite{HanMosh}) contrairement \`a
l'outre-espace (voir par exemple \cite{Vogt}).

Un groupe $G$ est acylindriquement hyperbolique s'il existe un
espace m\'etrique Gromov-hyperbolique $(X,\dist)$ sur lequel il agit
par isom\'etries, de fa\c{c}on non \'el\'ementaire et
acylindriquement (voir par exemple \cite{Os}). L'action est \og
acylindrique \fg\  si pour tout $\epsilon>0$ et pour tous deux
points suffisamment \'eloign\'es, il existe un nombre uniform\'ement
born\'e d'\'el\'ements du groupe d\'epla\c{c}ant ces deux points
d'au plus $\epsilon$: pour tout $\epsilon>0$ il existe $R,N>0$ tel
que pour tous deux points $x,y\in X$ tels que $\dist(x,y)\geq R$ il
y a au plus $N$ \'el\'ements $g\in G$ satisfaisant $\dist(x,gx)\leq
\epsilon$ et $\dist(y,gy)\leq \epsilon$. Un crit\`ere (\cite{Os})
pour montrer qu'un groupe non virtuellement cyclique est
acylindriquement hyperbolique est de trouver un espace hyperbolique
sur lequel il agit de fa\c{c}on non-\'el\'ementaire tel qu'il existe
un \'el\'ement loxodromique satisfaisant la propri\'et\'e WPD (voir
\cite{DGO} pour plus de d\'etails). Cependant ce crit\`ere
n'implique pas que l'action en question est acylindrique mais qu'il
en existe une.

B. Bowditch \cite{Bow_tight} prouve que le graphe des courbes est
Gromov-hyperbolique et que l'action du groupe modulaire d'une
surface compacte hyperbolique sur le graphe des courbes est
acylindrique (voir \'egalement \cite[Lemma 6.49]{DGO}).

Dans \cite{Lo}, en faisant agir le groupe de Cremona sur l'espace
hyperbolique $\H$, l'auteur trouve des \'el\'ements loxodromiques
satisfaisant la propri\'et\'e WPD sur tout corps. De m\^eme, M.
Bestvina et M. Feighn \cite{Best_Feighn_free_factors} prouvent que
le complexe des facteurs libres est Gromov-hyperbolique et ils
trouvent des \'el\'ements satisfaisant la propri\'et\'e WPD. Ou
encore, M. Handel et L. Mosher \cite{HanMosh} obtiennent le m\^eme
r\'esultat en faisant agir le groupe $\Out(F_n)$ sur le complexe des
scindements libres (en prouvant que ce dernier est \'egalement
Gromov-hyperbolique). Ainsi, ces trois groupes font partie de la
famille des groupes \og acylindriquement hyperboliques \fg.

Cependant, contrairement au cas du groupe modulaire agissant sur le
complexe des courbes, l'action du groupe de Cremona (sur un corps
quelconque) sur $\H$ et l'action de $\Out(F_n)$ sur le complexe des
scindements libres ne sont pas acylindriques car tous les
\'el\'ements loxodromiques ne satisfont pas la propri\'et\'e WPD
(\cite[Remark after Definition 2.5]{Os}). Dans le cas de l'action du
groupe $\Out(F_n)$ sur le complexe des facteurs libres, tous les
\'el\'ements loxodromiques satisfont la propri\'et\'e WPD mais il
n'est pas connu si l'action est acylindrique ou pas.

M\^eme si de par sa construction le graphe de quasi-adjacence semble
plut\^ot \^etre un analogue du complexe des scindements libres que
du complexe des facteurs libres, il serait int\'eressant de voir si
l'action du groupe de Cremona (lorsque $\kk$ est alg\'ebriquement
clos) sur le graphe de quasi-adjacence est acylindrique. Un argument
en faveur de cela et que d'apr\`es \cite[Theorem 1.4]{Os} nous
savons qu'il existe un syst\`eme de g\'en\'erateurs du groupe de
Cremona tel que le graphe de Cayley soit Gromov-hyperbolique et que
l'action naturelle soit acylindrique. Or le graphe de
quasi-adjacence peut \^etre vu comme le graphe de Cayley associ\'e
\`a la famille g\'en\'eratrice constitu\'ee de $\PGL(3,\kk)$, des
applications de Jonqui\`eres et des applications ayant au plus $9$
points-base en position presque g\'en\'erale.

\`A noter que classiquement un syst\`eme de g\'en\'erateurs est
donn\'e par $\PGL(3,\kk)$ et les applications de Jonqui\`eres. Or
nous prouvons dans cet article (Corollaire
\ref{cor_wright_modif_pas_hyperbolique}) que le graphe de Cayley
associ\'e \`a cette famille, appel\'e ici \og graphe de Wright
modifi\'e \fg\ n'est pas Gromov-hyperbolique.

Cependant, prouver que l'action est acylindrique sur le graphe de
quasi-adjacence ne semble pas facile puisqu'actuellement il n'existe
pas d'algorithme calculant la distance entre deux sommets.

Apr\`es avoir introduit les notations utiles et rappel\'e des
r\'esultats que nous utiliserons dans cet article, nous
d\'efinissons dans la section \ref{section_wright} le graphe de
Wright et le graphe de Wright modifi\'e. Nous montrons qu'ils sont
quasi-isom\'etriques et de diam\`etre infini mais qu'ils ne sont pas
Gromov-hyperboliques. Dans la section
\ref{sec_graphes_associes_voronoi}, nous construisons le graphe
d'adjacence et le graphe de quasi-adjacence. Puis, nous montrons que
le premier n'est pas Gromov-hyperbolique mais que le second l'est
tout en restant de diam\`etre infini.

\section*{Remerciements}
Je remercie vivement St\'ephane Lamy, pour sa grande disponibilit\'e
et ses relectures minutieuses. Je remercie \'egalement les
rapporteurs de ma th\`ese, Charles Favre et Yves de Cornulier, pour
leurs remarques qui ont permis de rendre certains passages plus
clairs. Je remercie J. Blanc et J-P. Furter pour des discussions
concernant la longueur des applications du groupe de Cremona. Enfin,
je remercie le Fonds National Suisse de la Recherche Scientifique
qui m'a support\'e financi\`erement \`a travers le projet \og
Algebraic subgroups of the Cremona groups \fg{} 200021\_159921.

\section{Pr\'eliminaires}
Nous rappelons rapidement ici les notions dont nous avons besoin.
Pour plus de d\'etails voir \cite{Lothese}.

\subsection{Action du groupe de Cremona sur $\H$}
Un espace essentiel dans l'\'etude du groupe des transformations
birationnelles du plan projectif, $\Bir(\P^2)$, est l'espace
hyperbolique de dimension infinie vivant dans l'espace de
Picard-Manin. Plus de pr\'ecisions se trouvent dans \cite[Section
4]{BC}, \cite[Part II.4]{CL}, \cite[Section 3]{C} et \cite[Section
1.2.3]{Lothese}.

Soit $S$ une surface. Une surface $S'$ domine $S$ s'il existe un
morphisme birationnel allant de $S'$ vers $S$. Consid\'erons $S_1$
et $S_2$ deux surfaces dominant $S$ et pour $i\in\{1,2\}$,
$\pi_i:S_i\rightarrow S$ leurs morphismes respectifs vers $S$. Nous
disons que deux points $p_1\in S_1$ et $p_2\in S_2$ sont
\'equivalents si $\pi_1^{-1}\circ \pi_2$ est un isomorphisme local
sur un voisinage de $p_2$ et envoie $p_2$ sur $p_1$. L'espace des
bulles (\og Bubble space\fg\ en anglais), not\'e $\B(S)$, est
l'union de tous les points de toutes les surfaces dominant $S$
modulo cette relation d'\'equivalence.

\subsubsection{Espace de Picard-Manin}
 Nous consid\'erons le groupe de N\'eron-Severi associ\'e \`a $S$ et tensoris\'e par $\R$. Nous le notons encore $\NS(S)$. C'est donc le groupe des diviseurs \`a coefficients r\'eels sur $S$ \`a \'equivalence num\'erique pr\`es. Il est muni d'une forme bilin\'eaire sym\'etrique, la forme d'intersection. Pour tout diviseur $D$ sur $S$ nous notons $\{D\}_S$ sa classe de N\'eron-Severi ou $\{D\}$ s'il n'y a pas d'ambigu\"{\i}t\'e sur la surface. Si $\pi : S'\longrightarrow S$ est un morphisme birationnel entre deux surfaces, alors le tir\'e en arri\`ere \[\pi^* : \NS(S)\hookrightarrow \NS(S')\] qui \`a la classe d'un diviseur associe la classe de sa transform\'ee totale, est un morphisme injectif qui pr\'eserve la forme d'intersection.
        Consid\'erons la limite inductive des groupes de N\'eron-Severi des surfaces $S'$ dominant $S$ : \[\PM_C(S)=\lim\limits_{\underset{S'\rightarrow S}{\longrightarrow}}\NS(S'),\]
        o\`u l'indice $C$ fait r\'ef\'erence aux b-diviseurs de Cartier (pour plus de pr\'ecisions voir \cite{Fa}).

        Soit $E_p$ le diviseur exceptionnel obtenu lors de l'\'eclatement de la surface $S$ au point $p$. Notons $S_p$ la surface obtenue. Nous notons $e_p$ la classe du diviseur $E_p$ dans $\PM_C(S)$, c'est-\`a-dire sur toute surface dominant $S_p$, $e_p$ correspond \`a la transform\'ee totale de $E_p$ sur cette surface.
        Par la suite, nous nous int\'eressons au compl\'et\'e $L^2$ de $\PM_C(S)$, appelé l'\emph{espace de Picard-Manin} (voir \cite{CL} et \cite{C} ou encore \cite{BFJ}) :  \[\PM(S)=\{\{D_0\}_{S}+\sum\limits_{p\in\B(S)}\lambda_pe_p\mid \lambda_p\in\R,\ \sum\limits_{p\in\B(S)}\lambda_p^2<\infty \text{ et }\{D_0\}_{S} \in\NS(S)\}.\] Ses \'el\'ements sont appel\'es \og classes de Picard-Manin\fg \ ou plus simplement \og classes\fg. Les classes $e_p$ o\`u $p$ est un point de $S$ ou d'une surface dominant $S$, sont d'auto-intersection $-1$, orthogonales deux \`a deux et orthogonales \`a $\NS(S)$.
            La forme d'intersection est bien d\'efinie sur $\PM(S)$ et elle est de signature $(1,\infty)$.

            Dans le cas o\`u la surface consid\'er\'ee est $\P^2$, nous notons simplement $\PM$ l'espace de Picard-Manin associ\'e :
                    \[\PM=\{n\l+\sum\limits_{p\in\B(\P^2)}\lambda_pe_p\mid n,\lambda_p\in\R,\ \sum\limits_{p\in\B(\P^2)}\lambda_p^2<\infty \},\] o\`u $\l$ est la classe de la droite dans $\P^2$.

\subsubsection{Espace hyperbolique}

Consid\'erons l'espace \[\H(S)=\{c\in \PM(S)\mid c\cdot c=1 \text{
et } c\cdot d_0>0\},\] o\`u $d_0\in\NS(S)$ est une classe ample.
C'est un espace hyperbolique de dimension
infinie une fois muni de la distance d\'efinie par $\dist(c,c')=\argcosh(c\cdot c')$
pour tous $c,c'\in \H(S)$. Nous nous int\'eressons plus particuli\`erement \`a
$\H(\P^2)$ que nous notons $\H$. Tout \'el\'ement de $\H$ est de la
forme \[n\ell +\sum_{p\in\B(\P^2)}\lambda_pe_p \text{ o\`u } n>0
\text{ et } n^2-\sum_{p\in\B(\P^2)}\lambda_p^2=1.\]

\subsubsection{Action du groupe de Cremona sur l'espace de Picard-Manin}
Consid\'erons une r\'esolution de $f\in\Bir(\P^2)$ :
 \begin{center}
   \begin{tikzcd}[column sep=small]
       & S\arrow{dl}[above left]{\pi} \arrow{dr}{\sigma} & \\
       \P^2 \arrow[dashrightarrow]{rr}{f} & & \P^2.
   \end{tikzcd}
 \end{center}
 Le groupe $\Bir(\P^2)$ agit sur $\PM$ (et en particulier sur $\H$) via l'application $(f,c)\mapsto \isome{f}(c)$ o\`u $\isome{f}$ est d\'efinie par \[\isome{f}=\isome{\sigma}\circ (\isome{\pi})^{-1}.\]

 Plus pr\'ecis\'ement, soit $d$ le degr\'e de $f$ et notons $p_0,p_1,\dots, p_{r-1}$ ses points-base de multiplicit\'es respectives $\{m_i\}_{0\leq i\leq r-1}$, $q_0,q_1,\dots q_{r-1}$ ceux de $f^{-1}$ de multiplicit\'es $\{m_i'\}_{0\leq i\leq r-1}$ et $a_{i,j}$ le nombre d'intersection des transform\'ees totales des diviseurs exceptionnels obtenus en \'eclatant respectivement les points $p_j$ et $q_i$, dans la r\'esolution de $f$. Notons \'egalement $\Bs(f)$ l'ensemble des points base de $f$.
Consid\'erons une classe $c$ de l'espace de Picard-Manin $\PM$ :
\[c=n\l-\sum\limits_{i=0}^{r-1}\lambda_ie_{p_i}-\sum\limits_{\substack{p\in\B(\P^2)\\p\notin\Bs(f)}}\lambda_pe_{p}.\]
L'action de $f$ sur $c$ est donn\'ee par la formule :
\begin{equation}
\isome{f}(c)=
\left(nd-\sum\limits_{j=0}^{r-1}\lambda_jm_j\right)\l-\sum\limits_{i=0}^{r-1}\left(nm_i'-\sum\limits_{j=0}^{r-1}\lambda_ja_{i,j}\right)e_{q_i}-\sum\limits_{\substack{p\in\B(\P^2)\\p\notin\Bs(f)}}\lambda_p\isome{f}(e_{p}).
\label{eq_action_f_sur_c}
\end{equation}

\subsubsection{Bord \`a l'infini de l'espace hyperbolique}
L'espace $\H$ \'etant un espace m\'etrique complet $\CAT(0)$, il
existe une notion de \emph{bord \`a l'infini} qui g\'en\'eralise
celle de bord des vari\'et\'es riemanniennes de dimension finie qui
sont compl\`etes, simplement connexes et \`a courbure n\'egative ou
nulle. Plus de d\'etails se trouvent dans \cite[Chapter
II.8]{BH}, par exemple. Le bord de $\H$ peut \^etre d\'efini comme suit.
\[\partial_{\infty} \H=\{c\in \PM\mid c\cdot c=0 \text{ et } c\cdot \l>0\}.\]Un point appartenant au bord de $\H$ est parfois appel\'e \emph{un point \`a l'infini}.
Les isom\'etries de $\H$ s'\'etendent de fa\c{c}on unique en des
hom\'eomorphismes de $\H\cup\partial_{\infty}\H$.

\subsection{Pavage de Vorono\"{\i}}
Nous rappelons bri\`evement ici la construction du pavage de
Vorono\"{\i} que nous avons faite dans \cite{Lovoronoi1}. Nous en
aurons besoin dans la section \ref{sec_graphes_associes_voronoi}
o\`u nous construisons des graphes associ\'es \`a ce pavage.

Consid\'erons deux surfaces $S$ et $S'$. Soient $p\in S$ et $q\in
S'$. Nous disons que $q$ est \emph{adh\'erent} \`a $p$, not\'e
$q\rightarrow p$, s'il existe un morphisme birationnel $\pi:
S'\rightarrow S$ tel que $p$ est un point \'eclat\'e par $\pi^{-1}$
et $q$ appartient \`a la transform\'ee stricte du diviseur
exceptionnel $E_p$ obtenu en \'eclatant $p$.

Pour construire le pavage de Vorono\"{\i}, nous nous sommes dans un
premier temps restreint \`a un sous-espace, not\'e $\orbl$, de $\H$
contenant l'enveloppe convexe de l'orbite de $\l$ sous l'action du
groupe de Cremona.
    \begin{defi}[{\cite[D\'efinition 2.1]{Lovoronoi1}}]
\label{proprietes_c} {\rm        L'ensemble $\orbl$ est le
sous-espace de $\H$ constitu\'e des classes
\[c=n\ell-\sum\limits_{p\in\B(\P^2)}\lambda_pe_{p} \ \ \text{
(}n\text{ r\'eel}\geq 1\text{)}\] satisfaisant :
        \begin{enumerate}
            \item[\rm 1)] \phantomsection \label{propriete_coeff_classe} $\lambda_p\geq 0$ pour tout $p\in \B(\P^2)$,
            \item[\rm 2)] \phantomsection \label{propriete_classe_canonique} la \emph{positivit\'e contre la classe anti-canonique} : \[ 3n-\sum\limits_{p\in \B(\P^2)}\lambda_p\geq 0,\]
            \item[\rm 3)] \phantomsection \label{propriete_classe_exces} la \emph{positivit\'e des exc\`es} de tout point $p\in\B(\P^2)$ : \[\lambda_{p} - \sum\limits_{\substack{q\in \B(\P^2)\\ q \to p }}\lambda_{q}\geq0,\]
            \item[\rm 4)] \phantomsection \label{propriete_classe_bezout} la \emph{condition de B\'ezout} : pour toute courbe de $\P^2$ de degr\'e $d$ passant avec multiplicit\'e $\mu_p$ en chaque point $p\in\B(\P^2)$ : \[nd-\sum\limits_{p\in \B(\P^2)}\lambda_p\mu_p\geq0.\]
        \end{enumerate}}
    \end{defi}

Le groupe de Cremona agit sur $\orbl$. Nous avons montr\'e que les
cellules de Vorono\"{\i} recouvrent l'espace $\orbl$
(\cite[Corollaire 2.11]{Lovoronoi1}) et forment ainsi un pavage.
Nous avons ensuite \'etudi\'e la g\'eom\'etrie de ce pavage en
caract\'erisant les germes des cellules adjacentes et
quasi-adjacentes (Th\'eor\`emes \ref{cor_cellule_adjacente} et
\ref{cor_cellules_quasi_adjacentes}).

 \subsection{Longueur d'une application du groupe de Cremona}
 J. Blanc et J-P. Furter construisent dans \cite{BlF} un algorithme pour d\'ecomposer une application $f$ du groupe de Cremona en un nombre minimal d'applications de caract\'eristique Jonqui\`eres (ou de \og Jonqui\`eres g\'en\'eralis\'ees \fg{} dans leur terminologie). Ce nombre est appel\'e \og la longueur \fg{} de l'application $f$ et est not\'e $\lgueur(f)$. Ils appellent \og pr\'ed\'ecesseur \fg{} de $f$ une application de caract\'eristique Jonqui\`eres $\j_1$ telle que $f\circ \j_1$ est de degr\'e minimal : pour toute application de caract\'eristique Jonqui\`eres $\j$,
 \[\deg(f\circ\j_1)\leq \deg(f\circ\j ).\]
 Ils montrent que composer une application avec un pr\'ed\'ecesseur fait strictement diminuer le degr\'e et que la longueur d'une application peut \^etre r\'ealis\'ee \`a l'aide de pr\'ed\'ecesseurs.
 \begin{thm}[{\cite[Theorem 1 and Lemma 1.3]{BlF}}]\label{thm_longueur_application}
    Pour tout $f\in\Bir(\P^2)$, il existe une d\'ecomposition minimale en applications de caract\'eristique Jonqui\`eres:
    \[f=\j_n\circ\cdot\circ\j_1\] telle que pour tout $1\leq i\leq n$ les points-base de $\j_i$ sont inclus dans les points-base de $f\circ\j_{1}^{-1} \circ\cdots\circ\j_{i-1}^{-1}$.
    De plus, pour tout $1\leq i\leq n$, \[\deg(f\circ \j_1^{-1}\circ \dots\circ \j_i^{-1})<\deg(f\circ \j_1^{-1}\circ \dots \circ \j_{i-1}^{-1}),\] o\`u $\j_0=\id$.
 \end{thm}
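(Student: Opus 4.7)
La démarche est de procéder par récurrence sur $\deg(f)$, toute l'étape inductive étant concentrée dans le lemme-clé suivant : pour tout $f\in\Bir(\P^2)$ avec $\deg(f)\geq 2$, il existe une application $\j$ de caractéristique Jonquières telle que $\Bs(\j)\subseteq \Bs(f)$ et $\deg(f\circ \j^{-1})<\deg(f)$. Ce lemme affirme essentiellement que $f$ possède un prédécesseur dont les points-base sont inclus dans ceux de $f$.

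Pour le démontrer, je m'appuierais sur l'inégalité de Noether : en ordonnant les multiplicités $m_0\geq m_1\geq\cdots\geq m_{r-1}$ de $f$ en ses points-base, on a $m_0+m_1+m_2>\deg(f)$ dès que $\deg(f)\geq 2$. On construit alors une application $\j$ de caractéristique Jonquières, de degré $\delta$ convenable de l'ordre de $m_0$, avec point-base principal $p_0$ de multiplicité $\delta-1$ et $2\delta-2$ points-base simples choisis parmi les autres points-base de $f$. La formule \eqref{eq_action_f_sur_c} conduit à une expression du type
\[
\deg(f\circ \j^{-1}) = \delta\deg(f) - \sum_{p\in\Bs(\j)} m^{\j}_p\, m^{f}_p,
\]
et l'inégalité de Noether, combinée aux relations de Cremona $\sum_i m_i = 3(\deg(f)-1)$ et $\sum_i m_i^2 = \deg(f)^2-1$, permet de vérifier que le terme d'intersection l'emporte strictement sur $(\delta-1)\deg(f)$, donnant la stricte décroissance.

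Une fois ce lemme en main, la récurrence se déroule sans difficulté : pour $\deg(f)=1$, $f\in\PGL(3,\kk)$ et la décomposition vide convient ; pour $\deg(f)\geq 2$, on applique le lemme à $f$ pour obtenir $\j_1$, puis on applique l'hypothèse de récurrence à $f\circ \j_1^{-1}$ pour produire une décomposition $f\circ\j_1^{-1}=\j_n\circ\cdots\circ\j_2$ satisfaisant les conditions voulues relativement aux points-base de chaque composition partielle $f\circ\j_1^{-1}\circ\cdots\circ\j_{i-1}^{-1}$. La concaténation donne $f=\j_n\circ\cdots\circ\j_1$ avec les inclusions de points-base demandées, et la suite strictement décroissante des degrés résulte tautologiquement de l'application du lemme à chaque étape.

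L'obstacle principal est de démontrer la \emph{minimalité} de cette décomposition, c'est-à-dire qu'elle réalise $\lgueur(f)$. La stratégie naturelle est un argument d'échange : étant donnée une décomposition quelconque $f=\j'_m\circ\cdots\circ\j'_1$ en applications de caractéristique Jonquières, on montre que l'on peut remplacer $\j'_1$ par un prédécesseur de $f$ sans augmenter la longueur $m$ ; en itérant, toute décomposition minimale peut être produite gloutonnement à l'aide de prédécesseurs, et la stricte décroissance du degré à chaque étape de n'importe quelle décomposition minimale en résulte. Cet argument d'échange requiert une analyse délicate de l'interaction des points-base sous composition de deux applications de caractéristique Jonquières et constitue le véritable cœur technique du résultat, correspondant à \cite[Lemma 1.3]{BlF}.
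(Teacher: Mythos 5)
Un point de cadrage d'abord : l'énoncé que vous démontrez est le théorème \ref{thm_longueur_application}, que l'article ne démontre pas — il est importé tel quel de \cite{BlF} (Theorem 1 et Lemma 1.3) et sert uniquement de boîte noire dans la suite (preuve de la proposition \ref{prop_qi_graphes}, lemme \ref{lemme_deg_fonction_longueur}). Il n'y a donc pas de preuve interne à l'article avec laquelle comparer votre tentative ; je ne peux évaluer que votre proposition en elle-même. Votre stratégie globale est bien celle de Blanc et Furter telle que l'article la résume (prédécesseurs, décroissance stricte du degré, réalisation de la longueur par des prédécesseurs), mais en l'état elle comporte deux lacunes réelles, que vous signalez d'ailleurs vous-même sans les combler.

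Première lacune : l'existence du lemme-clé. Vous affirmez pouvoir construire une application $\j$ de caractéristique Jonquières de degré $\delta$ \og convenable \fg{}, de point-base principal $p_0$ de multiplicité $\delta-1$ et de $2\delta-2$ points-base simples pris parmi $\Bs(f)$. Or pour qu'un tel système linéaire soit homaloïdal, il faut vérifier des conditions de position (pas de composante fixe, par exemple pas trop de points simples sur une droite passant par $p_0$ ; gestion des points infiniment proches, qui peuvent être adhérents les uns aux autres), et le choix de $\delta$ n'est pas dicté par la seule inégalité de Noether : c'est précisément l'obstruction qui rend la preuve de Noether--Castelnuovo délicate, et le cas $\delta=2$ \og naïf \fg{} échoue dans les configurations dégénérées. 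L'inégalité $(\delta-1)\deg(f)<(\delta-1)m_{p_0}+\sum m_{q_i}$ que vous invoquez n'est établie nulle part pour le $\delta$ que vous choisissez. Seconde lacune, plus grave : la minimalité. Votre argument d'échange (\og remplacer $\j'_1$ par un prédécesseur sans augmenter la longueur \fg{}) est exactement l'énoncé non trivial à démontrer — a priori une décomposition de longueur minimale pourrait devoir faire remonter le degré à une étape intermédiaire, et rien dans votre texte n'exclut ce scénario. Vous le qualifiez à juste titre de \og véritable cœur technique \fg{}, mais le qualifier ainsi ne le démontre pas. En l'état, votre texte est une feuille de route correcte vers le résultat de \cite{BlF}, pas une preuve.
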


 \subsection{Crit\`ere de Gromov-hyperbolicit\'e}

Le th\'eor\`eme suivant est un crit\`ere pratique pour montrer qu'un
graphe est Gromov-hyperbolique, notamment lorsque nous ne
connaissons pas les g\'eod\'esiques du graphe \'etudi\'e (comme
c'est le cas dans la sous-section \ref{subsection_hyp_graphequasi}
).
\begin{thm}[Crit\`ere de Bowditch \cite{Bow}]\label{thm_bow}
    Soient $h\geq 0$ et $\Gamma$ un graphe connexe muni de la distance standard. Supposons que pour tous $x$ et $y$ appartenant \`a l'ensemble des sommets $S(\Gamma)$, il existe un sous-graphe $\Gamma(x,y)$ de $\Gamma$ connexe par arcs v\'erifiant :
    \begin{enumerate}
        \item[1)] \phantomsection \label{critere_bow1}  Pour tout $x$, $y\in S(\Gamma)$, $x$, $y\in\Gamma(x,y)$.
        \item[2)] \phantomsection \label{critere_bow2} Pour tout $x$, $y$, $z\in S(\Gamma)$, $\Gamma(x,y)\subseteq \Ne_h(\Gamma(x,z)\cup\Gamma(y,z))$ o\`u $\Ne_h$ signifie le $h$-voisinage tubulaire.
        \item[3)] \phantomsection \label{critere_bow3} Pour tout $x$, $y\in S(\Gamma)$ tels que $\dist(x,y)\leq 1$, le diam\`etre de $\Gamma(x,y)$ dans $\Gamma$ est au plus $h$.
    \end{enumerate}
    Alors $\Gamma$ est $\delta$-hyperbolique avec $\delta$ d\'ependant de $h$.
\end{thm}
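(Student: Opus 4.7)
L'idée est de traiter les sous-graphes $\Gamma(x,y)$ comme des « géodésiques approchées » entre $x$ et $y$, puis de transférer la finesse ternaire de la famille $\{\Gamma(x,y)\}$ aux véritables géodésiques du graphe.

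\emph{Étape 1 : borne linéaire sur le diamètre de $\Gamma(x,y)$.} Je démontrerais d'abord l'existence d'une constante $K=K(h)$ telle que $\diam \Gamma(x,y) \leq K\dist(x,y) + K$. Posons $D(n) = \sup\{\diam \Gamma(x,y) \mid \dist(x,y)\leq n\}$. Pour $x,y\in S(\Gamma)$ à distance $\leq n$ et $m$ un sommet d'une géodésique $[x,y]$ situé à distance au plus $\lceil n/2\rceil$ de $x$ et de $y$, la condition (2) donne $\Gamma(x,y)\subseteq \Ne_h(\Gamma(x,m)\cup\Gamma(m,y))$. Comme $m$ appartient à $\Gamma(x,m)\cap\Gamma(m,y)$ par (1), la réunion $\Gamma(x,m)\cup\Gamma(m,y)$ a diamètre au plus $2D(\lceil n/2\rceil)$, d'où $D(n)\leq 2D(\lceil n/2\rceil) + 2h$. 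Combiné au cas de base $D(1)\leq h$ fourni par la condition (3), ceci entraîne $D(n) = O(n)$.

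\emph{Étape 2 : proximité entre $\Gamma(x,y)$ et les géodésiques véritables.} Je chercherais ensuite une constante $M=M(h)$ telle que $\Gamma(x,y)$ et toute géodésique $[x,y]$ de $\Gamma$ soient à distance de Hausdorff au plus $M$. L'idée est la suivante : pour $v\in[x,y]$, on dichotomise la géodésique et on applique récursivement la condition (2) aux triplets ainsi formés pour contraindre $v$ à rester proche des $\Gamma$ correspondants ; réciproquement, la borne linéaire de l'étape 1 interdit à $\Gamma(x,y)$ de produire de longs détours loin de $[x,y]$, car un tel détour forcerait $\diam \Gamma(x,y)$ à dépasser la borne $K\dist(x,y)+K$.

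\emph{Étape 3 : finesse des triangles géodésiques.} Une fois la proximité Hausdorff bornée par $M$ acquise, la condition (2) transportée aux géodésiques véritables entraîne que, pour tout triangle géodésique $([x,y],[y,z],[x,z])$, chaque côté est dans le $(2M+h)$-voisinage de la réunion des deux autres : c'est la $\delta$-finesse de Rips avec $\delta=2M+h$, donc la Gromov-hyperbolicité de $\Gamma$.

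\emph{Obstacle principal.} L'étape 2 constitue le cœur technique. N'ayant pas encore établi l'hyperbolicité, on ne peut invoquer le lemme de Morse pour conclure que les chemins approchés fournis par $\Gamma(x,y)$ sont proches des géodésiques ; la démonstration originale de Bowditch contourne cet obstacle par un argument d'amorçage qui itère la condition (2) sur des échelles décroissantes en s'appuyant sur la borne linéaire de l'étape 1. Une alternative consisterait à vérifier directement la condition de Gromov aux quatre points en étudiant la configuration des connecteurs $\Gamma(\cdot,\cdot)$ autour d'un quatrième point, sans jamais expliciter la proximité aux véritables géodésiques.
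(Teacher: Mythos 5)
Première remarque : l'article ne démontre pas cet énoncé. C'est un résultat importé, cité d'après Bowditch, et utilisé comme boîte noire dans la sous-section \ref{subsection_hyp_graphequasi} ; il n'y a donc pas de preuve interne à laquelle comparer votre texte, et je l'évalue pour lui-même.

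Votre étape 1 est correcte et essentiellement complète : la récurrence $D(n)\leq 2D(\lceil n/2\rceil)+2h$, amorcée par $D(1)\leq h$ grâce à la condition \ref{critere_bow3}, donne bien $\diam\Gamma(x,y)\leq K\dist(x,y)+K$ avec $K=K(h)$. L'étape 3 est également correcte \emph{une fois l'étape 2 acquise}. Le problème est que l'étape 2, que vous identifiez vous-même comme le c\oe ur technique, n'est pas démontrée : la dichotomie de la géodésique $[x,y]$ que vous décrivez, itérée jusqu'à des segments de longueur au plus $1$, ne fournit que l'inclusion $\Gamma(x,y)\subseteq \Ne_{h\lceil\log_2 \dist(x,y)\rceil+h}([x,y])$, c'est-à-dire une borne \emph{logarithmique} en $\dist(x,y)$ et non une constante uniforme $M(h)$. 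Le passage du logarithme à une constante est précisément le contenu non trivial du théorème : il faut un argument supplémentaire de localisation (prendre un point de $\Gamma(x,y)$ à distance $R$ de la géodésique, se restreindre à un sous-segment de longueur comparable à $R$ et réappliquer l'estimation logarithmique à cette échelle pour obtenir une inégalité du type $R\leq Ch\log R+C$, donc $R$ borné), et cet argument est absent de votre texte, qui renvoie explicitement à Bowditch pour le fournir. De même, l'inclusion réciproque $[x,y]\subseteq\Ne_M(\Gamma(x,y))$ n'est qu'affirmée (l'argument du détour long invoqué via la borne linéaire de l'étape 1 n'exclut pas qu'un point isolé de la géodésique soit loin de $\Gamma(x,y)$ sans que le diamètre de celui-ci n'explose) ; c'est ici que la connexité par arcs de $\Gamma(x,y)$, que vous n'utilisez nulle part, doit intervenir. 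En l'état, votre proposition est un plan honnête et bien orienté, mais dont l'étape décisive reste une citation plutôt qu'une démonstration.
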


Cette propri\'et\'e est stable par quasi-isom\'etrie.

\paragraph{D\'efinition (\cite[Definition 8.14]{BH}).}
    Soient deux espaces m\'etriques $(X,d_X)$ et $(X',d_{X'})$. Une application $g : X\rightarrow X'$ est un \emph{plongement quasi-isom\'etrique} s'il existe deux constantes $K\geq 1$ et $L\geq 0$ telles que pour tout $x,y\in X$ :
    \[\frac{1}{K}\dist_X(x,y)-L\leq\dist_{X'}(g(x),g(y))\leq K\dist_X(x,y)+L.\]
    Si de plus il existe une constante $C$ telle que tout \'el\'ement de $X'$ appartient au $C$-voisinage de l'image de $g$, $g$ est
    une \emph{quasi-isom\'etrie}. Dans ce cas, les espaces m\'etriques $X$ et $X'$ sont dits \emph{quasi-isom\'etriques}.

\begin{thm}[{voir par exemple \cite[Th\'eor\`eme p.88]{GdH}}]\label{thm_quasi_isom_hyper}
Soient $(X,\dist_X)$ et $(X',\dist_{X'})$ deux espaces m\'etriques
g\'eod\'esiques quasi-isom\'etriques. L'espace $X$ est
Gromov-hyperbolique si et seulement si l'espace $X'$ est
Gromov-hyperbolique.
\end{thm}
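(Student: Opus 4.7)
Il s'agit d'un théorème classique dont la démonstration standard repose sur le lemme de stabilité des quasi-géodésiques dans les espaces Gromov-hyperboliques (lemme de Morse). La conclusion étant symétrique, il suffit de prouver une seule implication : je suppose $(X,\dist_X)$ $\delta$-hyperbolique et je cherche $\delta'\geq 0$ tel que $(X',\dist_{X'})$ soit $\delta'$-hyperbolique.

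La première étape consiste à construire une quasi-isométrie \emph{réciproque} $g' : X'\to X$ de la quasi-isométrie $g : X\to X'$ donnée, de constantes $(K,L,C)$. Comme tout point de $X'$ est à distance au plus $C$ de l'image de $g$, pour chaque $x'\in X'$ je choisis un point $g'(x')\in X$ tel que $\dist_{X'}(g(g'(x')),x')\leq C$. Un calcul direct à partir des inégalités de quasi-isométrie montre alors que $g'$ est une $(K',L')$-quasi-isométrie et que $g\circ g'$, $g'\circ g$ sont à distance uniformément bornée par une constante $M$ de l'identité sur chacun des deux espaces.

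Le point central est le lemme de Morse, que j'établirais ensuite : dans un espace géodésique $\delta$-hyperbolique, toute $(K,L)$-quasi-géodésique joignant deux points $p$ et $q$ reste à distance au plus $R$ d'une géodésique $[p,q]$, où $R$ ne dépend que de $K$, $L$ et $\delta$. C'est l'obstacle technique principal. La démonstration usuelle consiste à majorer la longueur d'un sous-arc maximal de la quasi-géodésique s'éloignant de plus de $R$ de $[p,q]$, en projetant ses extrémités sur la géodésique puis en combinant finesse des triangles géodésiques et l'encadrement de la longueur d'une quasi-géodésique par la distance entre ses extrémités.

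Une fois le lemme de Morse acquis, je montre la finesse des triangles géodésiques de $X'$. Soit $T'=[a,b]\cup[b,c]\cup[a,c]$ un tel triangle. Je considère dans $X$ un triangle géodésique $T$ de sommets $g'(a),g'(b),g'(c)$, qui est $\delta$-fin par hypothèse. Les images par $g$ des trois côtés de $T$ sont des $(K'',L'')$-quasi-géodésiques de $X'$ joignant des points à distance au plus $KM+L$ des sommets $a,b,c$. Le lemme de Morse, appliqué cette fois dans $X'$, assure que chacune de ces quasi-géodésiques reste à distance uniformément bornée du côté géodésique correspondant de $T'$. En combinant cette proximité et la finesse de $T$, tout point du côté $[a,b]$ se trouve à distance majorée par une constante $\delta'$ ne dépendant que de $K,L,C,\delta$ d'un point de $[b,c]\cup[a,c]$ : ainsi $T'$ est $\delta'$-fin, ce qui conclut.
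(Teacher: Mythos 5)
Le papier ne démontre pas cet énoncé : c'est un théorème classique cité tel quel (Ghys--de la Harpe), et la démonstration standard est bien celle que vous esquissez, à savoir la construction d'une quasi-isométrie réciproque $g'$ puis le lemme de stabilité des quasi-géodésiques (lemme de Morse). Vos deux premières étapes sont correctes et conformes à cette démonstration de référence.

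En revanche, votre dernière étape contient une circularité réelle. Vous appliquez le lemme de Morse \emph{dans $X'$} aux quasi-géodésiques $g([g'(a),g'(b)])$, etc., pour les comparer aux côtés géodésiques de $T'$. Or le lemme de Morse n'est valable que dans un espace déjà Gromov-hyperbolique, et l'hyperbolicité de $X'$ est précisément ce que vous cherchez à établir ; dans un espace géodésique quelconque (par exemple $\R^2$ euclidien, où une spirale logarithmique paramétrée convenablement est une quasi-géodésique), une quasi-géodésique peut s'éloigner arbitrairement de la géodésique de mêmes extrémités, donc cette étape ne peut pas être justifiée telle quelle. L'argument correct renverse le sens du transfert : on envoie les côtés géodésiques de $T'$ dans $X$ par $g'$, on obtient des quasi-géodésiques de $X$ auxquelles on applique le lemme de Morse \emph{dans $X$} (qui est $\delta$-hyperbolique par hypothèse) pour les comparer aux côtés du triangle géodésique $T$ de sommets $g'(a)$, $g'(b)$, $g'(c)$ ; la finesse de $T$ et le contrôle en distance de Hausdorff donnent alors que le triangle quasi-géodésique $g'(T')$ est uniformément fin dans $X$, et l'on revient dans $X'$ en utilisant le fait que $g'$ est un plongement quasi-isométrique : la minoration $\frac{1}{K'}\dist_{X'}(x',y')-L'\leq \dist_X(g'(x'),g'(y'))$ suffit pour conclure que $T'$ est $\delta'$-fin. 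Le schéma global est donc le bon et la correction est de routine, mais l'étape finale, telle que vous l'écrivez, ne tient pas.
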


\section{Graphe de Wright}\label{section_wright}
    En 1992, D. Wright a introduit dans son article \cite{W} un complexe simplicial simplement connexe de dimension $2$ sur lequel le groupe de Cremona de rang $2$ sur un corps alg\'ebriquement clos agit. Il permet de voir ce groupe comme un produit amalgam\'e de trois de ses sous-groupes le long de leurs intersections. Dans ce chapitre, nous d\'efinissons ce complexe et montrons que le graphe sous-jacent n'est pas Gromov-hyperbolique. Ceci r\'epond par la n\'egative \`a une question pos\'ee par A. Minasyan et D. Osin dans \cite{MO}. Par cons\'equent, de ce point de vue-l\`a ce n'est pas un analogue du complexe des courbes ou de l'arbre de Bass-Serre.

\subsection{D\'efinition}\label{section_def_Wright}
    Une \emph{surface rationnelle marqu\'ee} est un couple $(S,\varphi)$ o\`u $S$ est une surface rationnelle et $\varphi : S \dashrightarrow \P^2$ est une application birationnelle. L'application $\varphi$ est appel\'ee \emph{marquage}. Soit $n\geq 0$, une surface de Hirzebruch $\F_n$ est une fibration au-dessus de $\P^1$ dont les fibres sont isomorphes \`a $\P^1$ et sont d'auto-intersection $0$, telle qu'il existe une section appel\'ee \emph{section exceptionnelle} d'auto-intersection $-n$. Lorsque $n>0$, cette derni\`ere est unique.
    Consid\'erons les surfaces rationnelles marqu\'ees $(S,\phi)$ o\`u $S$ est isomorphe \`a $\P^2$ ou \`a une surface de Hirzebruch $\F_n$, pour $n\geq 0$.
    Nous disons que deux telles surfaces rationnelles marqu\'ees $(S_1,\phi_1)$ et $(S_2,\phi_2)$ sont \'equivalentes si elles satisfont l'une des deux conditions suivantes (voir Figure \ref{figure_paires_equiv_def_Wright}):
    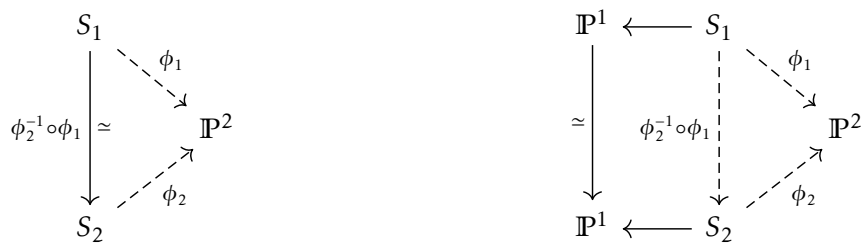
\begin{figure}[h]
        \centering
       \begin{tikzcd}
       \displaystyle S_1 \arrow[dd, rightarrow, "\simeq", "\phi_2^{-1}\circ\phi_1"'] \arrow[dr, dashrightarrow, "\phi_1"] & &\hspace{2cm}& \P^1 \arrow[dd, rightarrow, "\simeq"'] \arrow[r, leftarrow] &   S_1 \arrow[dd, dashrightarrow, "\phi_2^{-1}\circ\phi_1"'] \arrow[dr, dashrightarrow, "\phi_1"]  & \\
       &\P^2 &&    & &\P^2 \\
       S_2 \arrow[ur, dashrightarrow, "\phi_2"']  & && \P^1 \arrow[r, leftarrow] & S_2 \arrow[ur, dashrightarrow, "\phi_2"']  &
       \end{tikzcd}
        \caption{Couples \'equivalents \label{figure_paires_equiv_def_Wright}}
    \end{figure}
    \begin{enumerate}
        \item[1)] $\phi_2^{-1}\circ\phi_1$ est un isomorphisme,
        \item[2)] $S_1$ et $S_2$ sont des surfaces de Hirzebruch d'indices $n_1,n_2>0$ et $\phi_2^{-1}\circ\phi_1$ pr\'eserve la fibration.
    \end{enumerate}

    Nous notons $\overline{(S,\phi)}$ une telle classe d'\'equivalence. Ce sont les sommets du complexe de Wright. Il y a trois types de sommets d\'ependant de la surface $S$ :
    \begin{enumerate}
        \item[$\bullet$] $S$ est isomorphe \`a $\P^2$,
        \item[$\bullet$] $S$ est isomorphe \`a $\F_n$, pour $n\geq1$,
        \item[$\bullet$] $S$ est isomorphe \`a $\F_0$.
    \end{enumerate}
    Tout sommet dont la surface $S$ est isomorphe \`a $\F_n$ pour $n\geq 1$ poss\`ede un repr\'esentant ayant $\F_1$ comme surface.
    Remarquons que si deux couples $(\F_1,\phi_1)$ et $(\F_1,\phi_2)$ sont \'equivalents alors $\phi_2^{-1}\circ \phi_1$ est soit un automorphisme, soit v\'erifie : \[\phi_2^{-1}\circ \phi_1=\pi^{-1}\circ\jonq\circ \pi \] o\`u $\pi^{-1}$ est l'\'eclatement d'un point $p\in \P^2$,  $\jonq$ est une application de Jonqui\`eres pr\'eservant le pinceau de droites passant par le point $p$.

    Trois sommets de type respectivement $\P^2$, $\F_n$ avec $n>0$ et $\F_0$ forment un triangle s'il existe des repr\'esentants respectifs $(\P^2,\phi_1)$, $(\F_1,\phi_2)$ et $(\F_0,\phi_3)$ qui v\'erifient les conditions (voir le diagramme commutatif~\ref{figure_triangle_def_Wright}) :
    \begin{enumerate}
        \item[a)] $\phi_1^{-1}\circ\phi_2$ est une contraction,
        \item[b)] $\phi_3^{-1}\circ\phi_2$ est la compos\'ee de l'\'eclatement d'un point hors de la section exceptionnelle par la contraction de la fibre passant par ce point,
        \item[c)] $\phi_3^{-1}\circ\phi_1$ est la compos\'ee de deux \'eclatements de points de $\P^2$ par la contraction de la droite passant par ces deux points.
    \end{enumerate}
    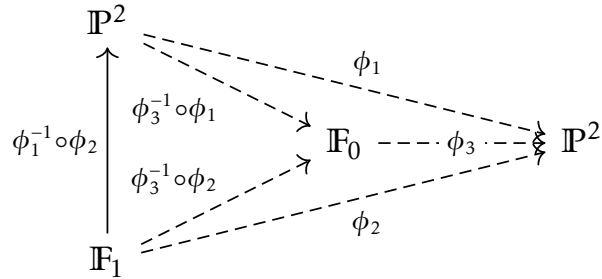
\begin{figure}[h]
        \centering
       \begin{tikzpicture}[baseline= (a).base]
       \node[scale=1.2] (a) at (0,0){
       \begin{tikzcd}
       \P^2  \arrow[drr, dashrightarrow, "\phi_3^{-1}\circ\phi_1"'] \arrow[drrrr, dashrightarrow, "\phi_1"]& & & &\\
       & &\F_0  \arrow[rr, dashrightarrow, "\phi_3" description]& & \P^2 \\
       \F_1 \arrow[urr, dashrightarrow, "\phi_3^{-1}\circ\phi_2"] \arrow[urrrr, dashrightarrow, "\phi_2"'] \arrow[uu, rightarrow,"\phi_1^{-1}\circ\phi_2"]& & & &
       \end{tikzcd} };
       \end{tikzpicture}
        \caption{Sommets r\'ealisant un triangle. \label{figure_triangle_def_Wright}}
    \end{figure}

    Ce complexe de dimension $2$ est simplement connexe (\cite[Theorem 5.5]{W}) mais il n'est pas contractile puisqu'il contient des sph\`eres de dimension $2$ comme par exemple celle de la figure \ref{figure_sphere} o\`u $\sigma$ est l'involution standard de Cremona, $\pi_p$ est l'\'eclatement du point $p\in\P^2$ et $\tau_{p,q}^{-1}$ \'eclate les points $p\in\P^2$ et $q\in\P^2$ puis contracte la droite passant par ces deux points.

    \begin{figure}
        \centering
        \scalebox{0.5}{
        \LARGE
       \begin{tikzpicture}[node distance=3cm, very thick]
       \node (F0 12)   [typetwo,
       label=above left:{$\overline{(\F_0,\tau_{[1:0:0],[0:1:0]})}\quad $}] {};
       \node (F1 2)   [typetwo,
       below left=of F0 12,
       label=left:{$\quad \overline{(\F_1,\pi_{[0:1:0]})}$}] {};
       \node (F0 23)   [typetwo,
       below right=of F1 2,
       label=below left:{$\overline{(\F_0,\tau_{[0:1:0],[0:0:1]})}\quad $}] {};
       \node (F1 1)   [typetwo,
       right=of F0 12,
       label=above right:{$\quad \overline{(\F_1,\pi_{[1:0:0]})}$}] {};
       \node (F0 13)   [typetwo,
       below right=of F1 1,
       label=right:{$\overline{(\F_0,\tau_{[1:0:0],[0:0:1]})}$}] {};
       \node (F1 3)   [typetwo,
       below left=of F0 13,
       label=below right:{$\quad \overline{(\F_1,\pi_{[0:0:1]})}$}] {};
       \node (P2)      [typetwo,
       above right=of F0 12, xshift=-25pt, yshift=50pt,
       label=above:{$\overline{(\P^2,\id)}$}] {};
       \node (P2 2)   [typetwo,
       below left=of F1 3, xshift=25pt, yshift=-50pt,
       label=below:{$\overline{(\P^2,\sigma)}$}] {};

       \draw[dotted] (P2)-- (F1 1) -- (P2 2);
       \draw[dotted] (P2)-- (F0 12) -- (P2 2);
       \draw (P2) -- (F1 2) -- (P2 2);
       \draw (P2) -- (F0 23) -- (P2 2);
       \draw (P2) -- (F1 3) -- (P2 2);
       \draw (P2) -- (F0 13) -- (P2 2);
       \draw[dotted] (F0 13) -- (F1 1) -- (F0 12) -- (F1 2);
       \draw (F1 2) -- (F0 23) -- (F1 3)-- (F0 13);
       \end{tikzpicture} }
        \caption{Une sph\`ere de dimension $2$ dans le complexe.\label{figure_sphere}}
    \end{figure}
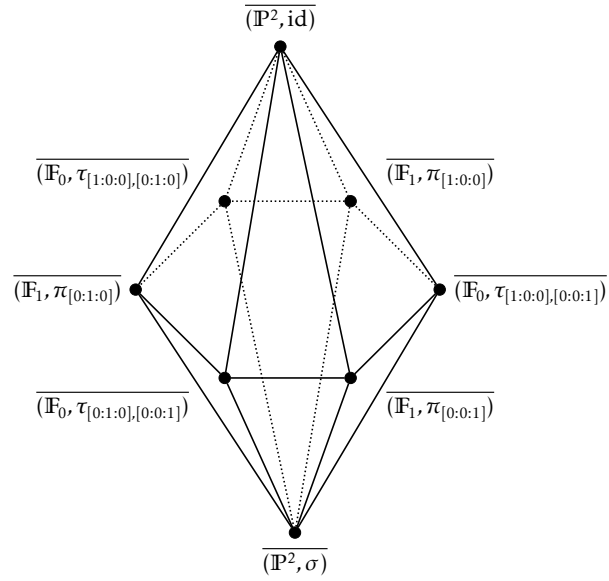

    Le groupe de Cremona agit sur les sommets du complexe de Wright par post-composition : pour toute application $f\in\Bir(\P^2)$ et pour tous repr\'esentant $(S,\phi)$ d'un sommet
    \[f\cdot (S,\phi)= (S,f\circ\phi).\]
    Remarquons que l'action de $f$ pr\'eserve la relation d'\'equivalence sur les sommets et ainsi l'action sur les sommets est bien d\'efinie. De plus, le groupe de Cremona agit par isom\'etries et conserve la structure du complexe.
    Le domaine fondamental de cette action sur le complexe de Wright est un triangle.
    Le graphe de Wright est le $1$-squelette de ce complexe. Il est muni de la m\'etrique standard o\`u les ar\^etes sont isom\'etriques au segment r\'eel $[0,1]$. Nous le notons $\W$.

\subsection{Graphe de Wright modifi\'e}\label{section_graphe_de_Wright_modifie}
    Consid\'erons le graphe $\G$ d\'efini comme suit et appel\'e \emph{graphe de Wright modifi\'e}. Ses sommets sont les sommets de type $\P^2$ du graphe de Wright et nous relions deux sommets s'ils \'etaient \`a distance deux dans le graphe de Wright. Comme nous n'avons plus qu'un type de sommet, nous pouvons oublier la surface et ne consid\'erer que le marquage. Ainsi, les sommets de ce graphe correspondent aux applications du groupe de Cremona munies de la relation d'\'equivalence d\'efinie pour le complexe de Wright, c'est-\`a-dire : \[f\sim g\Leftrightarrow \text{ il existe } a\in\PGL(3,\kk) \text{ tel que } f=g\circ a.\] Une telle classe est not\'ee $\overline{f}$.

    Dans le graphe de Wright, si deux sommets de type $\P^2$ sont \`a distance deux parce qu'il existe un sommet de type $\F_0$ \`a distance un de chacun d'eux, alors il existe \'egalement deux sommets distincts de type $\F_n$ poss\'edant cette propri\'et\'e. Il est donc suffisant de s'int\'eresser aux applications stabilisant les sommets de type $\F_n$.
    Le stabilisateur d'un sommet de type $\F_n$ dans le graphe de Wright est un conjugu\'e du groupe de Jonqui\`eres. Ainsi, dans le graphe de Wright modifi\'e il existe une ar\^ete entre deux sommets $\overline{f}$ et $\overline{g}$ si et seulement s'il existe deux repr\'esentants $f$ et $g$ de $\overline{f}$ et $\overline{g}$ tels que l'un soit obtenu en pr\'e-composant l'autre par un \'el\'ement $\j$ appartenant \`a la classe d'une application de Jonqui\`eres : \[g=f\circ \j .\]
    Montrons que cette d\'efinition est ind\'ependante du choix des repr\'esentants. En effet, consid\'erons deux autres repr\'esentants $f\circ a$ et $g\circ b$ o\`u $a,b\in\PGL(3,\kk)$ de $\overline{f}$ et $\overline{g}$. Nous avons alors :
    \[g\circ b= (f\circ a) \circ (a^{-1}\circ \j \circ a )\circ (a^{-1}\circ b). \]
    Comme $a^{-1}\circ b$ est un automorphisme et que $a^{-1}\circ \j\circ a$ est une application de Jonqui\`eres, la compos\'ee $(a^{-1}\circ \j \circ a )\circ (a^{-1}\circ b)$ appartient \`a la classe d'une application de Jonqui\`eres comme annonc\'e.

    \begin{rmq}\label{rmq_longueur}
{\rm    La distance entre deux sommets $\bar{f}$ et $\bar{g}$ est
donn\'ee par la longueur de l'application $f^{-1}\circ g$ introduit
par J. Blanc et J-P. Furter dans \cite{BlF}.}
    \end{rmq}

    Remarquons que le graphe de Wright modifi\'e est quasi-isom\'etrique au graphe de Cayley du groupe de Cremona pour la famille de g\'en\'erateurs constitu\'ee des transformations de Jonqui\`eres et des automorphismes de $\P^2$.

La proposition suivante justifie le fait que nous nous concentrons
sur le graphe de Wright modifi\'e plut\^ot que sur le graphe de
Wright.

    \begin{prop}\label{prop_quasiisom}
        L'inclusion du graphe de Wright modifi\'e dans le graphe de Wright donn\'ee par l'application $\id : \overline{f} \mapsto \overline{(\P^2,f)}$ est une quasi-isom\'etrie.
    \end{prop}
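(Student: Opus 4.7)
Le plan consiste à vérifier les deux conditions d'une quasi-isométrie: une estimation bi-Lipschitz à constante additive près sur les distances, et la surjectivité grossière. D'abord, l'application $\id$ est une bijection bien définie entre les sommets de $\G$ et les sommets de type $\P^2$ de $\W$: deux marquages $(\P^2, f_1)$ et $(\P^2, f_2)$ sont équivalents dans $\W$ si et seulement si $f_2^{-1}\circ f_1 \in \PGL(3,\kk)$, ce qui coïncide avec la relation d'équivalence définissant les sommets de $\G$. La majoration $\dist_\W(\id(\overline f), \id(\overline g)) \leq 2\,\dist_\G(\overline f, \overline g)$ est alors immédiate, puisque par construction toute arête de $\G$ joint deux sommets de type $\P^2$ à $\W$-distance exactement $2$. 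Pour la surjectivité grossière, je vais montrer que toute arête de $\W$ est contenue dans un triangle, ce qui entraîne que tout sommet de type $\F_n$ ou $\F_0$ est $\W$-adjacent à un sommet de type $\P^2$. Partant par exemple d'une arête $(\P^2,\phi_1)-(\F_1,\phi_2)$ avec $\phi_1^{-1}\circ\phi_2 = \pi_p$ l'éclatement d'un point $p\in\P^2$, on la complète en un triangle en choisissant un second point $p'\neq p$ et en posant $\phi_3 := \phi_1\circ\tau_{p,p'}^{-1}$, les trois conditions du triangle étant alors satisfaites; les autres types d'arêtes se traitent de manière analogue.

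La minoration reposera sur le lemme-clé suivant: \emph{si $u, v$ sont deux sommets de type $\P^2$ avec $\dist_\W(u,v)\leq 3$, alors $\dist_\G(\overline u, \overline v)\leq 2$.} Le cas $\dist_\W = 2$ n'est autre que la définition des arêtes de $\G$. Pour $\dist_\W = 3$, on considère une géodésique $u\to a\to b\to v$; comme deux sommets de type $\P^2$ ne sont jamais $\W$-adjacents, $a$ et $b$ sont nécessairement de type $\F$. En appliquant la construction précédente à l'arête $a-b$, on obtient un sommet de type $\P^2$ noté $w$ avec $\dist_\W(a,w) = \dist_\W(b,w) = 1$. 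Par conséquent $\dist_\W(u,w)\leq 2$ et $\dist_\W(v,w)\leq 2$; de plus $w\notin\{u,v\}$, sinon l'on aurait $\dist_\W(u,v)\leq 2$. Cela fournit un chemin $\overline u - \overline w - \overline v$ de longueur au plus $2$ dans $\G$.

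Pour conclure la minoration, étant donnée une $\W$-géodésique $v_0, v_1, \ldots, v_m$ entre deux sommets de type $\P^2$, je remplacerai chaque $v_i$ par un sommet voisin de type $\P^2$ noté $w_i$ (avec $w_0 = v_0$ et $w_m = v_m$, en utilisant la surjectivité grossière). Les projections consécutives vérifient alors $\dist_\W(w_i, w_{i+1}) \leq 3$, donc $\dist_\G(\overline{w_i}, \overline{w_{i+1}}) \leq 2$ par le lemme-clé, produisant un $\G$-chemin de longueur totale au plus $2m$ et donc l'inégalité $\dist_\G(\overline{v_0}, \overline{v_m}) \leq 2\,\dist_\W(v_0, v_m)$. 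La difficulté principale résidera dans la construction explicite du triangle complétant une arête arbitraire de $\W$ et dans l'analyse du cas $\W$-distance $3$; ces deux étapes sont purement combinatoires mais requièrent une compréhension précise des trois types d'arêtes du complexe de Wright.
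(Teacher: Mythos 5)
Votre démonstration est correcte et suit essentiellement la même démarche que celle de l'article : identification des sommets de $\G$ avec les sommets de type $\P^2$ de $\W$, comparaison des distances avec un facteur $2$, et quasi-surjectivité via le fait que tout sommet de $\W$ appartient à un triangle donc est à distance $1$ d'un sommet de type $\P^2$. Vous explicitez en outre (complétion de toute arête en un triangle, lemme sur les paires de sommets de type $\P^2$ à $\W$-distance au plus $3$) la minoration que l'article se contente d'affirmer via l'égalité $\dist_{\W}=2\dist_{\G}$, ce qui est un complément bienvenu mais ne change pas la nature de l'argument.
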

    \begin{proof}
        Notons $\mathcal{S}(\G)$ et $\mathcal{S}(\W)$ les ensembles des sommets des graphes $\G$ et $\W$.
        Remarquons que dans le cas des graphes, il suffit de montrer qu'il existe une quasi-isom\'etrie entre les sommets. En effet, tout point d'une ar\^ete est \`a distance au plus $\frac{1}{2}$ d'un sommet.
        Consid\'erons l'application : \[\begin{array}{cccc}
        \id:& \mathcal{S}(\G)&\rightarrow& \mathcal{S}(\W)\\
        &\overline{f} &\mapsto &\overline{(\P^2,f)}.
        \end{array}\]
        Par d\'efinition des sommets des deux graphes, elle est bien d\'efinie. Montrons que c'est un plongement quasi-isom\'etrique.

        Remarquons, que dans le graphe de Wright, toute g\'eod\'esique reliant deux sommets diff\'erents de type $\P^2$ passe alternativement par des sommets de type $\P^2$ et des sommets de type $\F_0$ ou $\F_1$. Donc la distance entre deux sommets de type $\P^2$ est paire dans le graphe de Wright. De plus, par d\'efinition du graphe de Wright modifi\'e, tout sommet de type $\P^2$ \`a distance $2$ dans le graphe de Wright correspond \`a un sommet \`a distance $1$ dans le graphe de Wright modifi\'e. Ainsi, pour tous $f,g\in\Bir(\P^2)$, nous avons :
        \[ \dist_{\W}(\overline{(\P^2,f)},\overline{(\P^2,g)})= 2\dist_{\G}(\overline{f},\overline{g}).\]
        Ceci implique que \[\frac{1}{2}\dist_{\G}(\overline{f},\overline{g})\leq\dist_{\W}(\overline{(\P^2,f)},\overline{(\P^2,g)}) \] et que l'application $\id$ est un plongement quasi-isom\'etrique.
        De plus, par construction tout sommet dans le graphe de Wright est \`a distance au plus $1$ d'un sommet de type $\P^2$ donc tout sommet du graphe de Wright est \`a distance au plus $1$ de l'image de $\id$. Ceci ach\`eve la preuve.
    \end{proof}

\subsection{Diam\`etre infini}
    Une premi\`ere question est de savoir si le diam\`etre du graphe modifi\'e est infini ou non. S'il \'etait de diam\`etre fini alors il serait trivialement $\delta$-hyperbolique avec $\delta$ \'egal au diam\`etre.
    Dans cette section, nous montrons que le graphe de Wright modifi\'e est de diam\`etre infini.
    Pour toute application $f\in\Bir(\P^2)$, nous notons $\md(f)$ le nombre de multiplicit\'es distinctes des points-base de $f$.
    \begin{lemme}\label{lemme_mult_pt_base_comp_jonq}
    Soient $g\in\Bir(\P^2)$ et $\j$ une application de caract\'eristique Jonqui\`eres. Le nombre de multiplicit\'es distinctes des points-base de $g\circ\j$ est inf\'erieur ou \'egal \`a \[\md(g\circ\j)\leq 2\md(g)+2.\]
    \end{lemme}
        \begin{proof}
        Les points-base de $g\circ\j$ sont inclus dans l'ensemble des points suivants :
                \begin{enumerate}
                    \item[1)] \phantomsection \label{enum_pt_base_du_dessus} l'image par $\j^{-1}$ des points-base de $g$ qui ne sont pas des points-base de $\j^{-1}$,
                    \item[2)] \phantomsection \label{enum_pt_base_du_dessous} les points-base de $\j$.
                \end{enumerate}
                Dans le cas \hyperref[enum_pt_base_du_dessus]{1)}, appliquer $\j^{-1}$ ne modifie pas le nombre de multiplicit\'es distinctes de ces points. En effet, la multiplicit\'e de chacun de ces points-l\`a est la m\^eme pour l'application $\j$ ou pour l'application $g\circ\j$. Ce n'est pas le cas des points de \hyperref[enum_pt_base_du_dessous]{2)} qui ne poss\`edent pas la m\^eme multiplicit\'e en tant que points-base de $\j$ ou de $g\circ\j$. Notons $m_p(f)$ la multiplicit\'e du point $p$ en tant que point-base de $f\in\Bir(\P^2)$ avec pour convention qu'elle est \'egale \`a $0$ si $p$ n'est pas un point-base de $f$. Notons $p_0$ le point-base de multiplicit\'e maximale de $\j$ qui est \'egalement celui de $\j^{-1}$, et $p_1,\dots,p_{2d-2}$ et $q_1,\dots,q_{2d-2}$ les petits points-base respectifs de $\j$ et $\j^{-1}$. Alors, en utilisant la formule (\ref{eq_action_f_sur_c}) (l'application $f$ de la formule est remplac\'ee par $\j^{-1}$ et $c$ par $\isome{g}^{-1}(\l)$), nous obtenons pour tout $1\leq i\leq 2d-2$ :   \begin{equation}\label{eq_mult}
                m_{p_i}(g\circ\j)=\deg(g)-m_{p_0}(g)-m_{q_i}(g).
                \end{equation}
                C'est la position des points-base de $\j^{-1}$ par rapport aux points-base de $g$ qui va d\'eterminer le nombre de multiplicit\'es diff\'erentes pour les points-base de $g\circ\j$.
                Nous allons avoir au maximum :
                \begin{enumerate}[leftmargin=0pt, labelwidth=-24pt, listparindent=\parindent]
                    \item[$\bullet$] une multiplicit\'e qui correspond au point-base maximal de $\j$,
                    \item[$\bullet$] une autre qui correspond au fait que certains petits points-base de $\j^{-1}$ ne sont pas des points-base de $g$ (d'apr\`es la formule \eqref{eq_mult}, cette multiplicit\'e est \'egale au degr\'e de $g$ si $p_0$ n'est pas un point-base de $g$ et \`a $\deg(g)-m_{p_0}(g)$ sinon),
                    \item[$\bullet$] $\md(g)$ multiplicit\'es diff\'erentes qui correspondent aux petits points-base de $\j^{-1}$ qui sont aussi des points-base de $g$ dont les multiplicit\'es sont donn\'ees par la formule \eqref{eq_mult},
                    \item[$\bullet$] $\md(g)$ multiplicit\'es diff\'erentes qui correspondent aux points-base de $g\circ\j$ qui sont images par $\j^{-1}$ des points-base de $g$ (qui ne sont pas des points-base de $\j$). Remarquons que pour ces points-l\`a leur multiplicit\'e pour $g$ ou pour $g\circ\j$ est identique.
                \end{enumerate}
                Ainsi, l'ensemble des points-base de l'application $g\circ\j$ poss\`ede au plus $2\md(g)+2$ multiplicit\'es diff\'eren\-tes.~\end{proof}
    \begin{lemme}\label{lemme_ptbase_compose_jonquieres}
        Soit $\{\j_n\}_{n\in\N^*}$ une suite d'applications de Jonqui\`eres. Pour tout $n\in\N^*$ le nombre de multiplicit\'es distinctes des points-base de $\j_1\circ\j_2\circ\dots\circ \j_n$ est inf\'erieur ou \'egal \`a $2^{n+1}-2$.
    \end{lemme}

    \begin{proof}
        Raisonnons par r\'ecurrence. Une application de Jonqui\`eres ne poss\`ede que deux multiplicit\'es distinctes donc le cas $n=1$ est v\'erifi\'e. Supposons que le r\'esultat soit vrai pour la compos\'ee de $n$ applications de Jonqui\`eres, montrons qu'il reste vrai pour $\j_{1}\circ\dots\circ \j_{n+1}$, la compos\'ee de $n+1$ applications de Jonqui\`eres. Notons $g=\j_1\circ\dots \circ\j_n$.
        Par hypoth\`ese de r\'ecurrence et par le lemme \ref{lemme_mult_pt_base_comp_jonq}, le nombre de multiplicit\'es diff\'erentes est  au maximum $2+2(2^{n+1}-2)=2^{n+2}-2$  comme annonc\'e.
    \end{proof}

    \begin{prop}\label{prop_diam_infini}
        Le graphe de Wright modifi\'e est de diam\`etre infini.
    \end{prop}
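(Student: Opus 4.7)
Par la Remarque \ref{rmq_longueur}, la distance $\dist_{\G}(\overline{\id},\overline{f})$ coïncide avec la longueur de $f$, c'est-à-dire le nombre minimal d'applications de caractéristique Jonquières dans une décomposition de $f$. Le Lemme \ref{lemme_ptbase_composé_jonquieres} donne alors la minoration
\[\dist_{\G}(\overline{\id},\overline{f})\;\geq\;\log_2\!\bigl(\md(f)+2\bigr)-1,\]
et il suffit donc d'exhiber une suite $(f_N)_{N\in\N}$ dans $\Bir(\P^2)$ vérifiant $\md(f_N)\to+\infty$ pour conclure que le diamètre de $\G$ est infini.

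L'idée est de construire $f_N$ comme un composé $\j_{N+1}\circ\cdots\circ\j_1$ d'applications de caractéristique Jonquières choisies en position suffisamment générale. La formule \eqref{eq_mult} établie dans la preuve du Lemme \ref{lemme_mult_pt_base_comp_jonq} montre que, lors d'une pré-composition par une Jonquières $\j$ de degré $d$, les nouveaux points-base de $g\circ\j$ --- à savoir les images par $\j^{-1}$ des petits points-base de $\j^{-1}$ qui tombent sur des points-base du composé courant $g$ --- reçoivent une multiplicité de la forme $\deg(g)-m_{p_0}(g)-m_{q_i}(g)$. Pour un choix générique de $\j$, avec $d$ suffisamment grand pour que les $2d-2$ petits points-base de $\j^{-1}$ rencontrent toutes les multiplicités distinctes présentes dans $g$, ces nouvelles valeurs seront deux à deux distinctes et différentes de celles déjà présentes, ce qui sature la borne du Lemme \ref{lemme_mult_pt_base_comp_jonq} : $\md(g\circ\j)=2\md(g)+2$. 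En itérant à partir de $\md(\j_1)=2$, on obtient $\md(f_N)=2^{N+2}-2>2^{N+1}-2$, d'où $\dist_{\G}(\overline{\id},\overline{f_N})\geq N+1$.

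Le point délicat est la justification rigoureuse de la saturation à chaque étape, c'est-à-dire la réalisation effective de la situation générique décrite ci-dessus. Deux stratégies me semblent envisageables : soit un argument de généricité montrant que l'ensemble des configurations dégénérées (celles produisant des coïncidences entre multiplicités) forme un fermé propre de l'espace des paramètres des Jonquières (pinceau préservé, positions des points-base, degré), garantissant ainsi l'existence d'un choix adéquat à chaque étape ; soit une construction inductive explicite, où l'on choisit à chaque étape la Jonquières suivante de sorte que les points-base de $\j^{-1}$ soient placés sur les points-base du composé courant avec une distribution contrôlée des multiplicités. Dans les deux cas, la preuve ne mobilise que des calculs locaux de positions de points-base et de multiplicités à partir de la formule \eqref{eq_mult}, conformément au caractère élémentaire annoncé dans l'introduction ; aucun recours à la théorie des prédécesseurs de \cite{BlF} n'est nécessaire.
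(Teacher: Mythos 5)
Votre réduction est correcte et coïncide avec celle de l'article : la minoration $\dist_{\G}(\overline{\id},\overline{f})\geq \log_2(\md(f)+2)-1$ découle bien du Lemme \ref{lemme_ptbase_composé_jonquieres} via la Remarque \ref{rmq_longueur}, et il suffit effectivement d'exhiber une suite $(f_N)$ vérifiant $\md(f_N)\to+\infty$. Mais cette construction est précisément le contenu de la preuve, et vous ne la menez pas à terme : vous reconnaissez vous-même que la saturation $\md(g\circ\j)=2\md(g)+2$ est \og le point délicat \fg{} et vous vous contentez d'esquisser deux stratégies possibles sans en exécuter aucune. En l'état, la preuve est donc incomplète. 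Justifier la saturation exigerait de contrôler simultanément que les $2d-2$ petits points-base de $\j^{-1}$ rencontrent des points-base de $g$ de chacune des $\md(g)$ multiplicités distinctes, que certains points-base de $g$ échappent à $\Bs(\j)$, et qu'aucune coïncidence accidentelle ne se produise entre les valeurs $\deg(g)-m_{p_0}(g)-m_{q_i}(g)$ issues de la formule \eqref{eq_mult}, les multiplicités héritées de $g$ et les deux valeurs résiduelles ; aucun de ces points n'est établi.

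L'article contourne entièrement cette difficulté par une construction beaucoup plus simple : il est inutile de saturer la borne exponentielle du Lemme \ref{lemme_mult_pt_base_comp_jonq}, une croissance \emph{linéaire} du nombre de multiplicités distinctes suffit face à une borne exponentielle en la longueur. Prenez des applications quadratiques $\q_1,\dots,\q_n$ telles que pour tout $i$ les points-base de $\q_i^{-1}$ soient disjoints de ceux de $\q_1\circ\cdots\circ\q_{i-1}$, condition facile à réaliser puisqu'il s'agit seulement d'éviter un ensemble fini de points. Dans cette situation, les points-base de $g\circ f$ sont ceux de $f$ avec multiplicité multipliée par $\deg(g)$ et les images par $f^{-1}$ de ceux de $g$ avec multiplicité inchangée ; la caractéristique de $\q_1\circ\cdots\circ\q_n$ est donc $(2^n;(2^{n-1})^3,\dots,2^3,1^3)$, qui présente exactement $n$ multiplicités distinctes. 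En choisissant $n>2^{D+1}-2$, on contredit l'hypothèse d'un diamètre fini $D$. Je vous conseille de remplacer votre argument de généricité par cette construction explicite, qui rend la preuve complète sans aucun travail supplémentaire.
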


    \begin{proof}
        Nous allons montrer par l'absurde que le graphe de Wright modifi\'e est de diam\`etre infini.
        Supposons que le diam\`etre $D$ de $\G$ soit fini. Choisissons un entier $n$ tel que $n>2^{D+1}-2$. Consid\'erons une suite d'applications quadratiques $\{\q_i\}_{1\leq i\leq n}$ telle que pour tout $i\in\{2,\dots,n\}$, les points-base de $\q_i^{-1}$ sont disjoints des points-base de $\q_{1}\circ\dots\circ\q_{i-1}$. La caract\'eristique de cette application est $(2^n;m_n^3,\dots,m_1^3)$ o\`u $m_i=2^i$. Cela d\'ecoule du fait que si les points-base de $f^{-1}$ sont disjoints des points-base de $g$ alors les points-base de $g\circ f$ sont les points-base de $f$ dont la multiplicit\'e a \'et\'e multipli\'ee par $\deg(g)$ et l'image par $f^{-1}$ des points-base de $g$ qui conservent la m\^eme multiplicit\'e. De plus, le degr\'e de $g\circ f$ est \'egal au produit des degr\'es de $f$ et de $g$. En particulier, l'application $\q_1\circ\dots \circ\q_n$ poss\`ede $n$ multiplicit\'es distinctes. D'apr\`es le lemme \ref{lemme_ptbase_compose_jonquieres}, cela signifie que le nombre minimal d'applications de Jonqui\`eres permettant de d\'ecomposer $\q_1\circ\dots \circ\q_n$ est strictement sup\'erieur \`a $D$. Ceci implique que $\dist_{\G}(\overline{\id},\overline{\q_1\circ\dots\circ\q_n})>D$ ce qui contredit le fait que le diam\`etre est born\'e.
    \end{proof}

    Le corollaire suivant est d\^u \`a la quasi-isom\'etrie du graphe de Wright et du graphe de Wright modifi\'e.

    \begin{cor}\label{cor_Wright_infini}
        Le graphe de Wright est de diam\`etre infini.
    \end{cor}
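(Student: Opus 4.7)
Mon plan est d'exploiter directement les deux résultats précédents, à savoir la Proposition \ref{prop_quasiisom} (quasi-isométrie entre le graphe de Wright modifié et le graphe de Wright via l'application $\id : \overline{f}\mapsto\overline{(\P^2,f)}$) et la Proposition \ref{prop_diam_infini} (le graphe de Wright modifié est de diamètre infini). L'idée est que le diamètre infini est une propriété préservée par quasi-isométrie : un plongement quasi-isométrique ne peut pas contracter une suite de distances non bornées en une suite bornée.

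Concrètement, je reprendrai l'inégalité déjà établie dans la preuve de la Proposition \ref{prop_quasiisom} :
\[\frac{1}{2}\dist_{\G}(\overline{f},\overline{g})\leq \dist_{\W}(\overline{(\P^2,f)},\overline{(\P^2,g)}).\]
D'après la Proposition \ref{prop_diam_infini}, pour tout $N>0$ il existe $\overline{f},\overline{g}\in\mathcal{S}(\G)$ tels que $\dist_{\G}(\overline{f},\overline{g})>N$ (par exemple on peut prendre $\overline{f}=\overline{\id}$ et $\overline{g}=\overline{\q_1\circ\cdots\circ\q_n}$ avec $n$ assez grand comme dans la preuve de la Proposition \ref{prop_diam_infini}). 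L'inégalité ci-dessus entraîne alors $\dist_{\W}(\overline{(\P^2,f)},\overline{(\P^2,g)})>N/2$, ce qui démontre que le diamètre de $\W$ est infini. Il n'y a pas d'obstacle réel ici : toute la substance a été faite dans la Proposition \ref{prop_diam_infini}, et le corollaire est un simple transfert par quasi-isométrie.
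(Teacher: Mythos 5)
Votre preuve est correcte et suit exactement la démarche du texte, qui déduit le corollaire de la quasi-isométrie entre le graphe de Wright et le graphe de Wright modifié (Proposition \ref{prop_quasiisom}) combinée à la Proposition \ref{prop_diam_infini} ; vous ne faites qu'expliciter l'inégalité de transfert des distances, ce qui est un ajout de détail et non un argument différent.
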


\subsection{Non-hyperbolicit\'e}
    Nous montrons dans cette section que le graphe de Wright modifi\'e n'est pas Gromov-hyperbolique. Pour cela, nous montrons qu'il existe un plongement quasi-isom\'etrique du graphe de Cayley de $\Z^2$ dans le graphe de Wright modifi\'e. Nous utilisons un sous-groupe du groupe de Cremona isomorphe \`a $\Z^2$, constitu\'e de deux applications particuli\`eres de la famille des twists d'Halphen.
    \subsubsection{Construction d'un sous-groupe ab\'elien libre de rang huit}\label{subsubsection_groupe_abelien} Les surfaces de Halphen sont construites de la fa\c{c}on suivante (voir \cite[p.24]{Ca} ou encore \cite[Section 3.4.1]{Lothese}).
    Soit $C_0$ une cubique lisse de $\P^2$. Fixons la loi de groupe sur $C_0$ de sorte que le neutre soit un point d'inflexion. Choisissons $9$ points sur cette courbe de sorte que la somme de ces points soit \'egale au neutre. Ainsi, nous avons un pinceau de courbes elliptiques passant par ces $9$ points. En les \'eclatant, nous obtenons une fibration sur $\P^1$ et dont les fibres sont les transform\'ees strictes de ce pinceau. La surface obtenue est appel\'ee \og surface de Halphen \fg{}.

     Soit $X$ une surface de Halphen obtenue en \'eclatant les points $p_0, \dots, p_8$ de $\P^2$. Le groupe de N\'eron-Severi de $X$ est engendr\'e dans l'espace de Picard-Manin par :
         \[\Pic(X)=\langle\l,e_{p_0},\dots,e_{p_8}\rangle.\] La classe du diviseur canonique de la surface $X$ est \'egale \`a \[\can_X=-3\l+e_{p_0}+\dots + e_{p_8}.\]
         De plus, \[\can_X^{\perp}/\langle\can_X\rangle=\langle e_{p_1}-e_{p_0},e_{p_2}-e_{p_0}\dots ,e_{p_8}-e_{p_0}\rangle\simeq\Z^8.\]
         D'apr\`es \cite[Theorem 2.10]{CaDol}, le groupe des automorphismes de $X$ contient un sous-groupe isomorphe \`a $\Z^8$. Notons $\{f_i\}_{1\leq i\leq 8}$ un syst\`eme g\'en\'erateur de ce sous-groupe.
        Soit $\pi :X\rightarrow\P^2$ l'\'eclatement des points $p_0,\dots, p_8$, les applications du groupe de Cremona $\{\pi\circ f_i\circ \pi^{-1}\}_{1\leq i\leq 8}$ engendrent un sous-groupe du groupe de Cremona isomorphe \`a $\Z^8$. Remarquons que pour $1\leq i\leq 8$, les applications $\pi\circ f_i\circ \pi^{-1}$ not\'ees $g_i$ poss\`edent au plus $9$ points-base. C'est \'egalement le cas de toutes les applications de ce $\Z^8$.

 \subsubsection{Non-hyperbolicit\'e du graphe de Wright}
   Dans cette sous-section, nous consid\'erons deux g\'en\'erateurs du sous-groupe ab\'elien de rang $8$ construit pr\'ec\'edemment. Nous montrons que le graphe de Cayley du sous-groupe $\Z^2$ obtenu se plonge quasi-isom\'etriquement dans le graphe de Wright modifi\'e.

    Nous avons besoin de la proposition suivante tir\'ee de \cite[(11) p.874]{CaDol}.
    \begin{prop}\label{prop_formule_action_halphen}
        \`A chaque \'el\'ement de $a\in\can_X^{\perp}/\langle\can_X\rangle$ est associ\'e un automorphisme de $X$, et donc une isom\'etrie $\tau_a$ de $\Pic(X)$ via la formule :
        \[\begin{array}{cccc}
        \tau_a : &\Pic(X) &\rightarrow &\Pic(X)\\
        &d&\mapsto & d-(\can_X\cdot\ d)a+(a\cdot d-\frac{1}{2}(\can_X\cdot\   d)(a\cdot a))\can_X
        \end{array}. \]
    \end{prop}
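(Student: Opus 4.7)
Mon plan consiste à séparer l'énoncé en deux parties et à traiter la partie purement calculatoire soigneusement, tout en admettant la partie géométrique via \cite{CaDol}. Plus précisément, l'existence de l'automorphisme de $X$ associé à $a\in\can_X^{\perp}/\langle\can_X\rangle$ provient de la structure du groupe de Mordell-Weil de la fibration elliptique sur une surface de Halphen, résultat de Cantat-Dolgachev déjà invoqué dans la sous-section \ref{subsubsection_groupe_abelien}. La seule chose à démontrer est donc que l'action induite sur $\Pic(X)$ est bien donnée par la formule annoncée.

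Pour ce faire, les deux identités fondamentales sont $\can_X^2=0$ (car $X$ est une surface rationnelle elliptique, dont la fibration est induite par $|{-\can_X}|$) et $a\cdot\can_X=0$ (par définition de $\can_X^{\perp}$). J'en déduirais d'abord que l'expression $\tau_a(d)$ est indépendante du représentant choisi pour $a$ modulo $\langle\can_X\rangle$ : en remplaçant $a$ par $a+m\can_X$, le terme supplémentaire $-m(\can_X\cdot d)\can_X$ issu de $-(\can_X\cdot d)a$ est compensé par le terme $m(\can_X\cdot d)\can_X$ issu de $(a\cdot d)\can_X$, tandis que $a\cdot a$ est inchangé puisque $\can_X^2=0=a\cdot\can_X$.

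Je vérifierais ensuite par un calcul direct que $\tau_a$ est une isométrie : dans le développement de $\tau_a(c)\cdot\tau_a(d)$, tous les monômes contenant $\can_X^2$ ou $\can_X\cdot a$ s'évanouissent, et les termes croisés restants ne s'annulent que grâce au coefficient correctif $-\frac{1}{2}(\can_X\cdot d)(a\cdot a)$, précisément conçu pour produire cette annulation. La relation $\tau_a(\can_X)=\can_X$ découle immédiatement des mêmes identités, ce qui est cohérent avec le fait qu'un automorphisme doit fixer la classe canonique.

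Le point le plus délicat, en dehors du calcul, est d'identifier $\tau_a$ avec l'action effective sur $\Pic(X)$ de l'automorphisme issu de la construction de Cantat-Dolgachev. Je reconnaîtrais dans $\tau_a$ la transvection d'Eichler-Siegel standard associée au vecteur isotrope $\can_X$ et à $a$ dans le réseau $\Pic(X)$, et j'invoquerais le fait que la représentation du groupe de Mordell-Weil dans le groupe orthogonal de $\Pic(X)$ se factorise précisément par ces transvections : c'est essentiellement le contenu de la formule (11) p.874 de \cite{CaDol}, que je me contenterais de citer à ce stade, la démonstration géométrique sortant du cadre du présent article.
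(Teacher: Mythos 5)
Votre proposition est correcte, mais il faut souligner que l'article ne donne en réalité \emph{aucune} démonstration de cet énoncé : la proposition est présentée comme extraite telle quelle de \cite[(11) p.874]{CaDol}, la construction géométrique de l'automorphisme (via le groupe de Mordell--Weil de la fibration elliptique) étant entièrement déléguée à cette référence, comme dans la sous-section \ref{subsubsection_groupe_abelien}. Votre démarche fait donc strictement plus que le texte : vous citez la même source pour le contenu géométrique (l'identification de l'action de l'automorphisme avec la transvection d'Eichler--Siegel associée au vecteur isotrope $\can_X$ et à $a$), mais vous ajoutez les vérifications formelles que la formule a bien un sens et définit une isométrie. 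Ces vérifications sont exactes : avec $\can_X^2=0$ (qui vaut car $X$ est $\P^2$ éclaté en $9$ points, indépendamment de l'indice de la surface de Halphen) et $a\cdot\can_X=0$, le remplacement de $a$ par $a+m\can_X$ produit deux termes en $m(\can_X\cdot d)\can_X$ de signes opposés qui se compensent, le développement de $\tau_a(c)\cdot\tau_a(d)$ redonne $c\cdot d$ après annulation des termes croisés grâce au coefficient $-\frac{1}{2}(\can_X\cdot d)(a\cdot a)$, et $\tau_a(\can_X)=\can_X$. Ce supplément de rigueur est bienvenu ; la seule chose à ne pas perdre de vue est que le point réellement non trivial --- le fait que l'automorphisme de $X$ agit sur $\Pic(X)$ \emph{précisément} par cette transvection --- reste, chez vous comme dans l'article, un emprunt à \cite{CaDol} et non un résultat démontré ici.
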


     En fait, les applications $f_i$ du paragraphe pr\'ec\'edent peuvent \^etre choisies de fa\c{c}on \`a correspondre \`a la translation $\tau_{e_{p_i}-e_{p_0}}$.

    \begin{cor}\label{cor_degre_halphen}
        Soient $g_1$ et $g_2$ les deux applications construites pr\'ec\'edemment. Alors pour tous $n,m\in\Z$ nous avons : \[\deg(g_1^n\circ g_2^m)=9(n^2+m^2+mn)+1.\]
    \end{cor}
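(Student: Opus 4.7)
Le plan est de ramener le calcul du degré à une application directe de la formule de la Proposition \ref{prop_formule_action_halphen}. Rappelons que pour toute application $h\in\Bir(\P^2)$, le degré $\deg(h)$ est le coefficient de $\l$ dans $\isome{h}(\l)$ ; or $g_1^n\circ g_2^m$ est par construction $\pi$-conjuguée à l'automorphisme $f_1^n\circ f_2^m$ de la surface de Halphen $X$, dont l'action sur $\Pic(X)\subset\PM$ est donnée par une translation $\tau_a$ sur $\can_X^{\perp}/\sgr{\can_X}$. Il suffira donc de calculer $\tau_a(\l)$ pour un bon choix de $a$ et d'en extraire le coefficient de $\l$.

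Je commencerais par vérifier que $a\mapsto\tau_a$ est un morphisme de groupes, c'est-à-dire que $\tau_a\circ\tau_b=\tau_{a+b}$. C'est un calcul direct à partir de la formule, utilisant $\can_X\cdot a=\can_X\cdot b=0$ (puisque $a,b\in\can_X^{\perp}$) et l'annulation $\can_X\cdot\can_X=0$, qui traduit le fait que $-\can_X=3\l-\sum_{i=0}^{8}e_{p_i}$ est la classe d'une fibre de la fibration elliptique, d'auto-intersection nulle. Les $f_i$ étant choisies de sorte que $f_i$ corresponde à $\tau_{e_{p_i}-e_{p_0}}$, on en déduit que $\isome{g_1^n\circ g_2^m}$ restreinte à $\Pic(X)$ est la translation $\tau_a$ associée à
\[a=n(e_{p_1}-e_{p_0})+m(e_{p_2}-e_{p_0})=-(n+m)e_{p_0}+ne_{p_1}+me_{p_2}.\]

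Je calculerais ensuite les nombres d'intersection intervenant dans la formule : $\can_X\cdot\l=-3$, $a\cdot\l=0$, et, les $e_{p_i}$ étant d'auto-intersection $-1$ et orthogonaux deux à deux, $a\cdot a=-(n+m)^2-n^2-m^2=-2(n^2+m^2+nm)$. En substituant dans la Proposition \ref{prop_formule_action_halphen}, on obtient
\[\tau_a(\l)=\l+3a-3(n^2+m^2+nm)\can_X,\]
et comme $a$ est sans composante en $\l$ tandis que $\can_X$ a $-3$ comme coefficient en $\l$, le coefficient de $\l$ dans $\tau_a(\l)$ vaut $1+9(n^2+m^2+nm)$, ce qui est bien la valeur annoncée.

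Le seul point légèrement délicat est la vérification de l'additivité de $a\mapsto\tau_a$ ; elle repose cependant uniquement sur les orthogonalités entre $\can_X$, $a$ et $b$ et sur $\can_X\cdot\can_X=0$, et le reste n'est qu'un calcul d'intersection sans difficulté.
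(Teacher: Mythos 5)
Votre preuve est correcte et suit essentiellement la même démarche que celle du papier : relèvement de $g_1^n\circ g_2^m$ en l'automorphisme de $X$ correspondant à la translation $\tau_{ne_{p_1}+me_{p_2}-(n+m)e_{p_0}}$, puis application de la formule de la Proposition \ref{prop_formule_action_halphen} à $\l$ et extraction du coefficient de $\l$, avec exactement les mêmes calculs d'intersection. La seule (légère) valeur ajoutée est la vérification explicite de l'additivité $\tau_a\circ\tau_b=\tau_{a+b}$, que le papier se contente d'affirmer comme propriété des translations.
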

    \begin{proof}
        Les deux applications $g_1$ et $g_2$ se rel\`event en des automorphismes de $X$ correspondant respectivement aux translations $\tau_{e_{p_1}-e_{p_0}}$ et $\tau_{e_{p_2}-e_{p_0}}$. L'it\'er\'ee de $g_1$ se rel\`eve en l'it\'er\'ee de la translation correspondante. De plus, translater $n$ fois par $e_{p_1}-e_{p_0}$ revient \`a translater une fois par $n(e_{p_1}-e_{p_0})$, ainsi : \[\tau_{e_{p_1}-e_{p_0}}^n=\tau_{n(e_{p_1}-e_{p_0})}. \] De m\^eme, pour tous $n,m\in\N$, la translation correspondant \`a $g_1^n\circ g_2^m$ est \'egale \`a \[\tau_{e_{p_1}-e_{p_0}}^n\circ\tau_{e_{p_2}-e_{p_0}}^m=\tau_{ne_{p_1}+me_{p_2}-(m+n)e_{p_0}}. \]
        De plus, la surface $X$ domine la surface de r\'esolution minimale de $g_1^n\circ g_2^m$ ainsi les actions de $g_1^n\circ g_2^m$ et de $\tau_{e_{p_1}-e_{p_0}}^n\circ\tau_{e_{p_2}-e_{p_0}}^m$ sur l'espace de Picard-Manin co\"{\i}ncident. Par cons\'equent, pour tous $n,m\in\N$,
        \[\deg(g_1^n\circ g_2^m)=\tau_{ne_{p_1}+me_{p_2}-(m+n)e_{p_0}}(\l)\cdot\l.\]
        Par la proposition \ref{prop_formule_action_halphen}, nous avons :
        \[\tau_{ne_{p_1}+me_{p_2}-(m+n)e_{p_0}}(\l)= \l +3(ne_{p_1}+me_{p_2}-(n+m)e_{p_0})-3(n^2+m^2+nm)\can_X.\]
        Ainsi nous obtenons que le degr\'e de $g_1^n\circ g_2^m$ est \'egal \`a
        \[\deg(g_1^n\circ g_2^m)=9(n^2+m^2+nm)+1,\] comme annonc\'e.
    \end{proof}

    Le lemme suivant est une adaptation du lemme de J. Blanc et S. Cantat (\cite[Lemma 5.10]{BC}). Je remercie J. Blanc de m'avoir fait remarquer qu'il s'adaptait dans ce cas-l\`a. Dans leur preuve, il suffit de consid\'erer le diviseur canonique associ\'e \`a la surface obtenue en \'eclatant $9$ points contenant l'ensemble $\Bs(f_2)\cup\Bs(f_1^{-1})$.
    \begin{lemme}\label{lemme_quadratique}
    Si $f_1$ et $f_2$ sont deux applications du groupe de Cremona telles que le cardinal de $\Bs(f_2)\cup\Bs(f_1^{-1})$ est inf\'erieur ou \'egal \`a $9$ alors elles satisfont l'in\'egalit\'e : \[\sqrt{\deg(f_2\circ f_1)}\leq \sqrt{\deg(f_2)}+\sqrt{\deg(f_1)}.\]
    \end{lemme}

   Le lemme suivant est extrait de \cite[Lemme 4.22]{BlF}.
    \begin{lemme}\label{lemme_deg_fonction_longueur}
    Soit $h\in\Bir(\P^2)$ telle que $h$ poss\`ede au plus $9$ points-base. Nous avons alors :
    \[\deg(h)\leq 5\lgueur(h)^2.\]
    \end{lemme}
    \begin{proof}
    Montrons ce r\'esultat par r\'ecurrence sur la longueur de l'application.
    Supposons que $h$ soit de longueur $1$, ce qui signifie que $h$ est une application de caract\'eristique Jonqui\`eres qui poss\`ede au plus $9$ points-base. Par cons\'equent $h$ est de degr\'e inf\'erieur ou \'egal \`a $5$.
    Supposons le r\'esultat vrai pour toute application de longueur inf\'erieure ou \'egale \`a $k-1$ et poss\'edant au plus $9$ points-base. Montrons le r\'esultat pour une application $h$ de longueur $k$ et ayant au plus $9$ points-base.
    D'apr\`es le th\'eor\`eme \ref{thm_longueur_application}, nous pouvons d\'ecomposer $h$ de fa\c{c}on minimale $h=\j_k\circ\cdots\circ\j_1$ \`a l'aide d'applications de caract\'eristique Jonqui\`eres $\j_1,\dots,\j_k$ de sorte que les points-base de $\j_1$ soient inclus dans les points-base de $h$. Notons $h_2= a\circ \j_k\circ\cdots\circ\j_2$. Cette application est de longueur $k-1$ et se r\'e\'ecrit : $h_2=h\circ j_1^{-1}$. L'ensemble des points-base de $h_2$ est inclus dans l'ensemble $E$ de points suivant :
    \begin{enumerate}
    \item[$\bullet$] l'ensemble des points-base de $j_1^{-1}$,
    \item[$\bullet$] l'image par $j_1$ des points-base de $h$ qui ne sont pas des points-base de $j_1$.
    \end{enumerate} L'ensemble des points-base de $j_1$ \'etant inclus dans celui des points-base de $h$, le cardinal de $E$ est major\'e par :
    \[\card(E)\leq\card(\Bs( j_1^{-1}))+ \card(\{p\in\Bs(h)\mid p\notin \Bs(j_1)\})=\card(h)\leq 9.\] Par cons\'equent le cardinal de l'ensemble $\Bs(h_2)\cup\Bs(j_1^{-1})$ est inf\'erieur ou \'egal \`a $9$. D'apr\`es le lemme \ref{lemme_quadratique}, le degr\'e de la compos\'ee satisfait :
    \[\sqrt{\deg(h_2\circ j_1)}\leq \sqrt{\deg(h_2)}+\sqrt{\deg(j_1)}.\]
    Comme les applications $j_1$ et $h_2$ ont toutes deux au plus $9$ points-base, nous concluons en utilisant le fait que le degr\'e de $j_1$ est inf\'erieur ou \'egal \`a $5$ et par hypoth\`ese de r\'ecurrence.
    \end{proof}

   \begin{thm}\label{thm_qi_gc_z2}
   Le graphe de Wright modifi\'e poss\`ede un sous-graphe quasi-isom\'etrique au graphe de Cayley de $\Z^2$, not\'e $\GC(\Z^2)$. Plus pr\'ecis\'ement,
   en consid\'erant les applications $g_1,g_2\in\Bir(\P^2)$ construites pr\'ec\'edemment, l'inclusion :
    \[\begin{array}{cccc}
        \mathcal{I}:& \GC(<g_1,g_2>)&\rightarrow& \G \\
        & f &\mapsto &\overline{f}
        \end{array}\] est une quasi-isom\'etrie sur son image.
   \end{thm}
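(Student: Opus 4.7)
Le plan est d'utiliser la Remarque \ref{rmq_longueur} qui identifie la distance dans le graphe de Wright modifié à la longueur de Blanc--Furter, puis de contrôler cette longueur en termes de la métrique des mots sur $\langle g_1,g_2\rangle\simeq\Z^2$ à l'aide du degré et du Lemme \ref{lemme_deg_fonction_longueur}. Plus précisément, par invariance à gauche de la distance dans le graphe de Cayley d'une part, et par l'identité $\dist_{\G}(\bar f,\bar{f'})=\lgueur(f^{-1}\circ f')$ de l'autre, il suffit d'établir l'existence de constantes $K\geq 1$ et $L\geq 0$ telles que, pour tous $n,m\in\Z$,
\[
\tfrac{1}{K}(\lvert n\rvert+\lvert m\rvert)-L\;\leq\;\lgueur(g_1^{n}\circ g_2^{m})\;\leq\; K(\lvert n\rvert+\lvert m\rvert)+L,
\]
la borne de droite fournissant déjà le plongement quasi-isométrique, et le fait que $\mathcal{I}$ est surjective sur son image assurant automatiquement la condition de densité requise par la définition.

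Pour la \emph{majoration}, j'utiliserai la sous-additivité immédiate de $\lgueur$ (toute décomposition minimale de $g_1$ et de $g_2$ se concatène en une décomposition de $g_1^n\circ g_2^m$ en applications de caractéristique Jonquières) ainsi que l'invariance par inverse, pour obtenir
\[
\lgueur(g_1^{n}\circ g_2^{m})\;\leq\;\lvert n\rvert\,\lgueur(g_1)+\lvert m\rvert\,\lgueur(g_2)\;\leq\; C_1(\lvert n\rvert+\lvert m\rvert),
\]
avec $C_1=\max(\lgueur(g_1),\lgueur(g_2))\cdot 2$. Cette partie est essentiellement formelle.

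La partie la plus substantielle est la \emph{minoration}. On utilise que $g_1^{n}\circ g_2^{m}$ appartient au sous-groupe $\Z^8$ construit dans la sous-section \ref{subsubsection_groupe_abelien} et possède donc au plus neuf points-base. Le Lemme \ref{lemme_deg_fonction_longueur} s'applique alors et fournit
\[
\deg(g_1^{n}\circ g_2^{m})\;\leq\;5\,\lgueur(g_1^{n}\circ g_2^{m})^{2}.
\]
En injectant la formule explicite du Corollaire \ref{cor_degre_halphen}, à savoir $\deg(g_1^{n}\circ g_2^{m})=9(n^{2}+m^{2}+nm)+1$, on en déduit
\[
\lgueur(g_1^{n}\circ g_2^{m})\;\geq\;\sqrt{\tfrac{9(n^{2}+m^{2}+nm)+1}{5}}.
\]
Il reste à remarquer que la forme quadratique $n^{2}+m^{2}+nm$ est définie positive, de sorte qu'il existe $c>0$ (par exemple $c=\tfrac{1}{4}$, via $n^{2}+m^{2}+nm\geq\tfrac{1}{2}(n^{2}+m^{2})\geq\tfrac{1}{4}(\lvert n\rvert+\lvert m\rvert)^{2}$) avec $n^{2}+m^{2}+nm\geq c(\lvert n\rvert+\lvert m\rvert)^{2}$. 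On obtient alors une minoration linéaire $\lgueur(g_1^{n}\circ g_2^{m})\geq C_2(\lvert n\rvert+\lvert m\rvert)$.

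Le principal obstacle est en fait seulement apparent : il consiste à articuler proprement les trois ingrédients (borne sur le nombre de points-base venant de la construction du sous-groupe $\Z^8$, estimée quadratique de Blanc--Furter, formule de Halphen--Cantat--Dolgachev pour le degré) ; une fois ces trois points invoqués dans le bon ordre, la quasi-isométrie s'en déduit sans difficulté supplémentaire, et en combinaison avec le Théorème \ref{thm_quasi_isom_hyper} et la Proposition \ref{prop_quasiisom} elle achèvera la preuve du Théorème \ref{main_thm_Wright_nonhyperbolique}.
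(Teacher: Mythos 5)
Votre schéma de preuve est correct et suit essentiellement la même démarche que l'article : majoration de la longueur par sous-additivité, et minoration en combinant la borne $\deg \leq 5\lgueur^2$ du Lemme \ref{lemme_deg_fonction_longueur} (applicable car les éléments de $\langle g_1,g_2\rangle$ ont au plus $9$ points-base) avec la formule de degré du Corollaire \ref{cor_degre_halphen}. Votre minoration $n^2+m^2+nm\geq\tfrac{1}{4}(|n|+|m|)^2$ est même un peu plus soignée que celle du texte, qui compare à $(n+m)^2$.
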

   \begin{proof}
   Un \'el\'ement du groupe $<g_1,g_2>$ s'\'ecrit $g_1^m\circ g_2^n$ o\`u $m,n\in\Z$. Prouvons que c'est une quasi-isom\'etrie plong\'ee. Par transitivit\'e du groupe $<g_1,g_2>$, nous pouvons supposer qu'un des sommets est $\id$.
   Pour tous $m,n\in\Z$, la distance $\dist_{\G}\big((g^m_1\circ g_2^n)(\overline{\id}),\overline{\id}\big)$ correspond \`a la longueur de l'application $g^m_1\circ g_2^n$. Ces applications ont au plus $9$ points-base, par cons\'equent nous pouvons leur appliquer le lemme \ref{lemme_deg_fonction_longueur} :
      \begin{equation}\label{eq_degre_fonction_longueur}
      \deg(g^m_1\circ g_2^n)\leq 5\lgueur(g^m_1\circ g_2^n)^2.
      \end{equation}
    D'apr\`es le corollaire \ref{cor_degre_halphen}, pour tout $m,n\in\Z$,
        \[5(n+m)^2\leq9(n^2+m^2+mn)+1= \deg(g^m_1\circ g_2^n) .\]
     Ceci prouve que \[\dist_{\GC(\Z^2)}(g^m_1\circ g_2^n,\id)\leq \dist_{\G}\big((g^m_1\circ g_2^n)(\overline{\id}),\overline{\id}\big).\]
     De plus, en notant $K=\max(\lgueur(g_1),\lgueur(g_2))$ le maximum entre la longueur de $g_1$ et la longueur de $g_2$ nous obtenons :
     \[\frac{1}{K}\dist_{\G}\big((g^m_1\circ g_2^n)(\overline{\id}),\overline{\id}\big)\leq \dist_{\GC(\Z^2)}(g^m_1\circ g_2^n,\id).\]
     Enfin sur son image, l'application $\mathcal{I}$ est une quasi-isom\'etrie.
   \end{proof}
   \begin{cor}\label{cor_wright_modif_pas_hyperbolique}
   Le graphe de Wright modifi\'e n'est pas Gromov-hyperbolique.
   \end{cor}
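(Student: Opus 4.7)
La stratégie est directe : le Théorème \ref{thm_qi_gc_z2} fournit un plongement quasi-isométrique $\mathcal{I}: \GC(\Z^2)\rightarrow \G$ du graphe de Cayley de $\Z^2$ dans le graphe de Wright modifié. Comme la Gromov-hyperbolicité se transmet à la source d'un plongement quasi-isométrique entre espaces géodésiques, il suffit de montrer que $\GC(\Z^2)$ n'est pas Gromov-hyperbolique pour obtenir la conclusion.

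Je commencerais par vérifier la non-hyperbolicité de $\GC(\Z^2)$. Pour tout entier $n\geq 1$, considérons le sommet base $z=(0,0)$ et les trois sommets $x=(n,0)$, $y=(0,n)$ et $t=(n,n)$ munis de la distance $\ell^1$. Un calcul direct donne $(x|t)_z=(y|t)_z=n$ tandis que $(x|y)_z=0$. La condition des quatre points de Gromov $(x|y)_z\geq\min((x|t)_z,(y|t)_z)-\delta$ imposerait alors $\delta\geq n$, et comme $n$ peut être choisi arbitrairement grand, aucune constante $\delta\geq 0$ ne convient.

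Pour le transfert de cette non-hyperbolicité à $\G$, je raisonnerais par l'absurde en supposant $\G$ $\delta_0$-hyperbolique. Les constantes $(K,L)$ du plongement étant fixées par le Théorème \ref{thm_qi_gc_z2}, les images par $\mathcal{I}$ des géodésiques de $\GC(\Z^2)$ sont des $(K,L)$-quasi-géodésiques de $\G$ ; le lemme de Morse garantit qu'elles restent à distance uniforme de vraies géodésiques. Par conséquent, les triangles géodésiques de $\GC(\Z^2)$ s'envoient sur des triangles quasi-géodésiques uniformément fins dans $\G$, et cette finitude se tire en arrière en une constante d'hyperbolicité pour $\GC(\Z^2)$, contredisant l'étape précédente. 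L'étape la plus technique est la manipulation des constantes $(K,L)$ et $\delta_0$ via le lemme de Morse, mais il s'agit d'un argument standard figurant par exemple dans \cite{BH}. Une variante plus élémentaire, évitant le lemme de Morse, consiste à traduire directement la condition des quatre points via les inégalités $\tfrac{1}{K}\dist_{\GC(\Z^2)}-L\leq \dist_\G\circ\mathcal{I}\leq K\dist_{\GC(\Z^2)}+L$ pour obtenir une borne affine sur le défaut de Gromov dans $\GC(\Z^2)$, qui contredit sa croissance linéaire en $n$.
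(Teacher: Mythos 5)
Votre démonstration est correcte et suit exactement la voie du papier : le corollaire y est énoncé sans preuve, comme conséquence immédiate du Théorème \ref{thm_qi_gc_z2}, l'argument sous-entendu étant précisément celui que vous détaillez (non-hyperbolicité de $\GC(\Z^2)$ par la condition des quatre points, puis transfert par le plongement quasi-isométrique via la stabilité des quasi-géodésiques). Vos calculs des produits de Gromov sont exacts et le recours au lemme de Morse est l'argument standard attendu ici.
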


  Le th\'eor\`eme \ref{main_thm_Wright_nonhyperbolique} d\'ecoule de la proposition \ref{prop_quasiisom} et du corollaire pr\'ec\'edent.

    \section{Graphes associ\'es au pavage de Vorono\"{\i}}\label{sec_graphes_associes_voronoi}
        Le but de cette partie est d'exhiber un graphe Gromov-hyperbolique naturel sur lequel le groupe de Cremona agit. Le premier candidat est \og le graphe d'adjacence \fg{} qui est une sorte de graphe dual au pavage de Vorono\"{\i}, codant la g\'eom\'etrie des cellules adjacentes. Nous montrons qu'il est quasi-isom\'etrique au graphe de Wright modifi\'e. Par cons\'equent, il est \'egalement quasi-isom\'etrique au graphe de Wright. Cela permet de retrouver le graphe de Wright dans l'espace hyperbolique $\H$. Mais cela implique qu'il n'est pas Gromov-hyperbolique puisque nous avons montr\'e que le graphe de Wright modifi\'e ne l'est pas. Nous nous int\'eressons ensuite au graphe \og quasi-adjacent \fg{} li\'e \`a la g\'eom\'etrie des cellules quasi-adjacentes. Nous montrons qu'il reste de diam\`etre infini et qu'il est Gromov-hyperbolique. Il peut ainsi \^etre vu comme un analogue du graphe des courbes.

        \subsection{Graphe d'adjacence}
        Construisons le graphe d'adjacence. Les sommets de ce graphe sont les centres des cellules de Vorono\"{\i}, c'est-\`a-dire l'orbite de $\l$ sous l'action du groupe de Cremona : \[\{\isome{f}(\l) \mid f\in\Bir(\P^2)\}.\] Remarquons que comme le groupe $\PGL(3,\kk)$ est le stabilisateur de la classe $\l$, les sommets du graphe d'adjacence sont en bijection avec les classes \`a gauche du groupe de Cremona modulo $\PGL(3,\kk)$. Nous relions deux sommets par une ar\^ete si les deux cellules de Vorono\"{\i} correspondantes sont adjacentes entre elles. Ce graphe est muni de la m\'etrique standard et nous le notons $\D$.
        D'apr\`es le corollaire \ref{cor_cellule_adjacente}, les germes des cellules adjacentes \`a la cellule $\V(\id)$ sont des applications de caract\'eristiques Jonqui\`eres ou sont des applications dont les points-base de l'inverse sont en position presque g\'en\'erale. Par cons\'equent la distance entre les sommets $\isome{f}(\l)$ et $\l$ du graphe d'adjacence correspond au nombre minimal d'applications de ces deux types qu'il faut pour d\'ecomposer l'application $f$.

        \begin{rmq}{\rm
        Le graphe d'adjacence est quasi-isom\'etrique au graphe de Cayley du groupe de Cremona avec pour famille de g\'en\'erateurs $\PGL(3,\kk)$, les applications de Jonqui\`eres et les applications ayant au plus $8$ points-base en position presque g\'en\'erale.}
        \end{rmq}

        \begin{prop}\label{prop_qi_graphes}
        L'application  \[\begin{array}{cccc}
                                \mathcal{I} :& \mathcal{S}(\D)&\rightarrow& \mathcal{S}(\G)\\
                                & \isome{f}(\l) &\mapsto &\bar{f}
                                \end{array}\] entre les sommets du graphe d'adjacence $\mathcal{S}(\D)$ et les sommets du graphe de Wright modifi\'e $\mathcal{S}(\G)$ est une quasi-isom\'etrie. En particulier, le graphe de Wright $\W$, le graphe de Wright modifi\'e $\G$ et le graphe d'adjacence $\D$ sont quasi-isom\'etriques.
        \end{prop}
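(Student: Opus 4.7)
Le plan est de vérifier la quasi-isométrie en comparant les relations d'adjacence sur l'ensemble commun des sommets. Tout d'abord, $\mathcal{I}$ est une bijection bien définie sur les sommets : les sommets de $\G$ sont par définition les classes à gauche modulo $\PGL(3,\kk)$, et les sommets de $\D$ sont en bijection avec ces mêmes classes puisque $\PGL(3,\kk)$ est le stabilisateur de $\l$ dans $\Bir(\P^2)$, de sorte que $\isome{f}(\l)=\isome{g}(\l)$ si et seulement si $g^{-1}\circ f\in\PGL(3,\kk)$. Comme l'action du groupe de Cremona préserve la structure des deux graphes et est transitive sur les sommets, il suffit de comparer les adjacences au voisinage du sommet de base $\l\leftrightarrow\overline{\id}$.

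L'inégalité $\dist_{\D}(\mathcal{I}(\overline{f}),\mathcal{I}(\overline{g}))\leq \dist_{\G}(\overline{f},\overline{g})$ est immédiate : par le théorème \ref{cor_cellule_adjacente}, tout germe de caractéristique Jonquières est le germe d'une cellule adjacente à $\V(\id)$, et donc toute arête de $\G$ donne naissance à une arête de $\D$. Concaténer le long d'une géodésique de $\G$ fournit la borne souhaitée.

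La direction plus délicate est $\dist_{\G}\leq C\cdot\dist_{\D}$. Par invariance sous l'action du groupe de Cremona, il suffit de montrer que lorsque $\V(\id)$ et $\V(h)$ sont adjacentes, la longueur $\lgueur(h)$ est bornée par une constante universelle. Par le théorème \ref{cor_cellule_adjacente}, soit $h$ est de caractéristique Jonquières (longueur $1$), soit $h$ possède au plus $8$ points-base en position presque générale. Dans ce second cas, je propose de borner le degré via les égalités de Noether $\sum m_i=3(\deg h-1)$ et $\sum m_i^2=(\deg h)^2-1$ : l'inégalité de Cauchy–Schwarz $(\sum m_i)^2\leq r\sum m_i^2$ avec $r\leq 8$ donne $9(\deg h-1)^2\leq 8((\deg h)^2-1)$, d'où $\deg h\leq 17$. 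Le théorème \ref{thm_longueur_application} entraîne alors $\lgueur(h)\leq \deg h-1\leq 16$, puisque la suite strictement décroissante des degrés produite par l'algorithme des prédécesseurs prend ses valeurs entières dans $\{1,\dots,\deg h\}$.

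En combinant les deux inégalités, $\mathcal{I}$ est une quasi-isométrie bijective sur les sommets ; comme tout point de l'un ou l'autre graphe est à distance au plus $\tfrac{1}{2}$ d'un sommet, cela s'étend en une quasi-isométrie entre les graphes. L'affirmation finale selon laquelle $\W$, $\G$ et $\D$ sont deux à deux quasi-isométriques découle alors de la transitivité de la quasi-isométrie, combinée à la proposition \ref{prop_quasiisom}. L'obstacle principal me semble être la borne sur le degré pour les applications ayant au plus $8$ points-base : le calcul de Noether–Cauchy–Schwarz ci-dessus est élémentaire, mais une alternative consiste à invoquer la finitude des groupes de Weyl $W(E_r)$ pour $r\leq 8$, qui implique de même qu'un nombre fini seulement de classes de Picard–Manin peuvent apparaître comme $\isome{h}(\l)$ sous cette contrainte.
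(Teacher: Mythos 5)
Votre preuve est correcte et suit essentiellement la même démarche que celle de l'article : réduction par transitivité au sommet de base, première inégalité venant du fait que tout germe de caractéristique Jonquières donne une cellule adjacente, seconde inégalité via la borne $\deg\leq 17$ pour les germes à au plus $8$ points-base puis la décomposition à degrés strictement décroissants du théorème \ref{thm_longueur_application} donnant $\lgueur\leq 16$. La seule différence est que vous re-démontrez la borne de degré par Noether et Cauchy--Schwarz là où l'article cite directement \cite[Corollaire 4.9]{Lovoronoi1}, ce qui est un calcul correct et rend l'argument autonome.
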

        \begin{proof}
        L'application $\mathcal{I}$ est bien d\'efinie car les deux ensembles de sommets sont d\'efinis par les classes \`a gauche du groupe de Cremona modulo $\PGL(3,\kk)$. Prouvons dans un premier temps que c'est un plongement quasi-isom\'etrique. Le groupe de Cremona agissant transitivement sur les sommets des deux graphes, il suffit de montrer les deux in\'egalit\'es sur les couples $(\isome{f}(\l),\isome{\id}(\l))$ o\`u $f\in\Bir(\P^2)$.

            Rappelons que la distance d'un sommet $\bar{f}$ du graphe de Wright modifi\'e au sommet $\bar{\id}$ est donn\'ee par la longueur de l'application $f$ (voir la remarque \ref{rmq_longueur}). Ainsi, nous avons : \[\dist_{\D}(\isome{f}(\l),\isome{\id}(\l))\leq\dist_{\G}(\overline{f},\overline{\id}).\]
            Les applications correspondant aux cellules adjacentes qui ne sont pas des applications de caract\'eristiques Jonqui\`eres sont de degr\'e au plus $17$ (\cite[Corollaire 4.9]{Lovoronoi1}).
            D'apr\`es le th\'eor\`eme \ref{thm_longueur_application}, la longueur minimale d'une application peut \^etre atteinte en composant des applications de caract\'eristique Jonqui\`eres de sorte que le degr\'e augmente strictement \`a chaque \'etape. Ainsi, toute application de degr\'e inf\'erieur ou \'egal \`a $17$ est la compos\'ee d'au plus $16$ applications de caract\'eristique Jonqui\`eres. Par cons\'equent, nous avons :
            \[\dist_{\G}(\overline{f},\overline{\id})\leq 16\dist_{\D}(\isome{f}(\l),\isome{\id}(\l)).\]
            Ceci ach\`eve de montrer que l'application $\mathcal{I}$ est un plongement quasi-isom\'etrique.
            De plus, les sommets des deux graphes sont en bijection donc $\mathcal{I}$ est une quasi-isom\'etrie. Cela implique que les graphes $\D$ et $\G$ sont quasi-isom\'etriques. De plus, nous avons d\'ej\`a montr\'e que les graphes $\W$ et $\G$ sont quasi-isom\'etriques (Proposition \ref{prop_quasiisom}).
        \end{proof}

        Cette proposition implique d'une part que le graphe d'adjacence construit est de diam\`etre infini et d'autre part qu'il n'est pas hyperbolique au sens de Gromov puisque le graphe de Wright modifi\'e ne l'est pas (Th\'eor\`eme \ref{thm_quasi_isom_hyper} et Corollaire \ref{cor_wright_modif_pas_hyperbolique}).

        \subsection{Graphe de quasi-adjacence}
        Les sommets du graphe de quasi-adjacence $\overline{\D}$ sont les centres des cellules de Vorono\"{\i} et nous relions deux sommets par une ar\^ete lorsque les cellules correspondantes sont quasi-adjacentes. Dans cette partie, nous \'etudions la Gromov-hyperbolicit\'e du graphe de quasi-adjacence.

         Par la remarque \ref{rmq_adjacent_sousgraphe_quasi}, le graphe d'adjacence est un sous-graphe du graphe de quasi-adjacence. D'apr\`es les th\'eor\`emes \ref{cor_cellule_adjacente} et \ref{cor_cellules_quasi_adjacentes}, les germes des sommets qui deviennent \`a distance un du sommet $\l$ sont les applications birationnelles qui ne sont pas de caract\'eristique Jonqui\`eres et qui poss\`edent exactement $9$ points-base en position presque g\'en\'erale. Ces applications fixent ou \'echangent les classes au bord de $\H$ de la forme $3\l-\sum_{i=0}^{8}e_{p_i}$ o\`u les points $p_i$ sont en position presque g\'en\'erale (Lemme \cite[Lemme 5.15]{Lovoronoi1}). Ainsi, les \'el\'ements qui nous g\^enaient pour l'hyperbolicit\'e du graphe d'adjacence deviennent elliptiques dans le graphe de quasi-adjacence. Il est par cons\'equent un bon candidat \`a \^etre hyperbolique.
        Afin que cela soit int\'eressant, nous devons montrer qu'il reste de diam\`etre infini.

        \subsubsection{Diam\`etre infini}
        Nous avons vu que le graphe d'adjacence est de diam\`etre infini. Nous montrons que cela reste le cas pour le graphe de quasi-adjacence.

        \begin{prop}\label{prop_diam_infini_adjacent_etendu}
            Le graphe de quasi-adjacence est de diam\`etre infini.
        \end{prop}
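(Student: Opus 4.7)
La stratégie est analogue à celle de la proposition \ref{prop_diam_infini}. Nous allons borner le nombre $\md(f)$ de multiplicités distinctes des points-base de $f$ en fonction du nombre de générateurs nécessaires pour la décomposer, où les générateurs correspondent maintenant aux arêtes du graphe de quasi-adjacence : les applications de caractéristique Jonquières et les applications possédant au plus $9$ points-base en position presque générale (théorème \ref{cor_cellules_quasi_adjacentes}).

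La première étape est de généraliser le lemme \ref{lemme_mult_pt_base_comp_jonq} sous la forme suivante : pour toute $g\in\Bir(\P^2)$ et toute $j$ de l'un des deux types précédents, $\md(g\circ j)\leq 2\md(g)+9$. Comme dans le cas Jonquières déjà traité, les points-base de $g\circ j$ se répartissent en deux familles : d'une part les images par $j^{-1}$ des points-base de $g$ qui ne sont pas dans $\Bs(j^{-1})$, qui conservent leur multiplicité et contribuent au plus $\md(g)$ valeurs distinctes, et d'autre part les points-base de $j$, dont les multiplicités sont données par la formule \eqref{eq_action_f_sur_c}. Cette seconde famille fournit au plus $\md(g)+2$ valeurs dans le cas Jonquières et au plus $9$ valeurs dans le cas non-Jonquières (puisque $j$ a alors au plus $9$ points-base) ; la borne $2\md(g)+9$ majore uniformément ces deux situations. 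Par une récurrence immédiate sur $n$, toute composition $f = j_1\circ\cdots\circ j_n$ de $n$ tels générateurs vérifie $\md(f)\leq 9(2^n-1)$.

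Il suffit alors de reprendre la construction utilisée dans la preuve de la proposition \ref{prop_diam_infini} : pour tout $m\in\N$, la suite d'applications quadratiques $\q_1,\dots,\q_m$ choisies de sorte que les points-base de $\q_i^{-1}$ soient disjoints de ceux de $\q_1\circ\cdots\circ\q_{i-1}$ donne une application de caractéristique $(2^m;(2^{m-1})^3,\dots,2^3,1^3)$ ayant exactement $m$ multiplicités distinctes. Si la distance dans $\overline{\D}$ de $\isome{\q_1\circ\cdots\circ\q_m}(\l)$ à $\l$ est majorée par $k$, alors $m\leq 9(2^k-1)$, d'où $k\geq \log_2(1+m/9)$. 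Cette minoration tend vers l'infini quand $m\to\infty$, ce qui montre que le diamètre du graphe de quasi-adjacence est infini. L'unique obstacle réel dans ce plan est la généralisation du lemme \ref{lemme_mult_pt_base_comp_jonq} au cas non-Jonquières, mais elle reste directe puisque seule la borne numérique de $9$ points-base intervient, sans référence fine à la structure de position presque générale.
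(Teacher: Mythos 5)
Votre démonstration est correcte et suit essentiellement la même stratégie que celle du papier : généraliser le lemme \ref{lemme_mult_pt_base_comp_jonq} aux générateurs du graphe de quasi-adjacence en observant qu'un facteur à au plus $9$ points-base ne peut contribuer qu'au plus $9$ nouvelles multiplicités distinctes, puis réutiliser la suite d'applications quadratiques de la proposition \ref{prop_diam_infini}. Seule la constante exponentielle diffère ($9(2^n-1)$ au lieu de $2^{n+3}-2$ dans le lemme \ref{lemme_ptbase_composé_applibord_id}), ce qui est sans conséquence.
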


        Pour montrer cette proposition nous modifions l\'eg\`erement la preuve faite pour montrer que le graphe de Wright modifi\'e est de diam\`etre infini (Proposition \ref{prop_diam_infini}). Le lemme suivant est l'analogue du lemme \ref{lemme_ptbase_compose_jonquieres}.

        \begin{lemme}\label{lemme_ptbase_compose_applibord_id}
            Soit $\{f_n\}_{n\in\N^*}$ une suite d'applications telle que pour tout $n$, $f_n$ est soit une application de caract\'eristique Jonqui\`eres soit une application ayant au plus $9$ points-base. Pour tout $n\in\N^*$, le nombre de multiplicit\'es diff\'erentes de $f_1\circ f_2\circ \dots\circ f_n$ est inf\'erieur ou \'egal \`a $2^{n+3}-2$.
        \end{lemme}
        \begin{proof}
            Raisonnons par r\'ecurrence.
            Si $n=1$, alors $f$ est soit une application de caract\'eristique Jonqui\`eres soit une application ayant au plus $9$ points-base. Dans le premier cas elle poss\`ede deux multiplicit\'es diff\'erentes et dans le second cas elle en poss\`ede au plus $9$. Dans les deux cas elle en poss\`ede moins de $14$.

            Supposons le r\'esultat vrai pour $f_1\circ f_2\circ\dots\circ f_n$. Montrons qu'il reste vrai pour l'application $f_{1}\circ\dots\circ f_{n+1}$. Notons $g=f_1\circ\dots \circ f_n$.
            Si $f_{n+1}$ est une application de caract\'eristique Jonqui\`eres alors d'apr\`es le lemme \ref{lemme_mult_pt_base_comp_jonq} et par hypoth\`ese de r\'ecurrence, l'application $g\circ f_{n+1}$ poss\`ede au plus \[2+2(2^{n+3}-2)=2^{n+4}-2\] multiplicit\'es diff\'erentes.

            Sinon $f_{n+1}$ poss\`ede au plus $9$ points-base et l'ensemble des points-base de $g\circ f_{n+1}$ est inclus dans l'ensemble de points suivant :
            \begin{enumerate}
                \item[1)] \phantomsection \label{enum_pt_base_du_dessus2} l'image par $f_{n+1}^{-1}$ des points-base de $g$ qui ne sont pas des points-base de $f_{n+1}^{-1}$,
                \item[2)] \phantomsection \label{enum_pt_base_du_dessous2} ainsi que des points-base de $f_{n+1}$.
            \end{enumerate}
            La multiplicit\'e des points du cas \hyperref[enum_pt_base_du_dessus2]{1)} est la m\^eme pour l'application $f_{n+1}$ et pour l'application $g\circ f_{n+1}$.
            Par hypoth\`ese de r\'ecurrence, il y a au maximum $9$ multiplicit\'es distinctes correspondant aux points-base de $f_{n+1}$ et $2^{n+3}-2$ multiplicit\'es diff\'erentes en plus correspondant aux points-base de $g$ qui ne sont pas des points-base de $f_{n+1}^{-1}$. Cela implique que $f_{1}\circ\dots\circ f_{n+1}$ poss\`ede au plus \[2^{n+3}-2+9 =2^{n+3}-7\leq 2^{n+4}-2\]
            multiplicit\'es diff\'erentes. Le r\'esultat est ainsi d\'emontr\'e.
        \end{proof}

        \begin{proof}[Preuve de la Proposition \ref{prop_diam_infini_adjacent_etendu}]
            Nous allons montrer par l'absurde que le graphe de quasi-adjacence est de diam\`etre infini.
            Supposons que le diam\`etre de $\bar{\D}$ soit de diam\`etre $D$ born\'e. Choisissons un entier $n$  strictement sup\'erieur \`a $2^{D+3}-2$ : \[n>2^{D+3}-2.\] Consid\'erons une suite d'applications quadratiques $\{\q_i\}_{1\leq i\leq n}$ telle que pour tout $i\in\{2,\dots,n\}$, les points-base de $\q_i^{-1}$ sont disjoints des points-bases de $\q_{1}\circ\dots\circ\q_{i-1}$. Ainsi l'application $\q_1\circ\dots\circ \q_n$ poss\`ede $n$ multiplicit\'es diff\'erentes (voir la preuve de la proposition \ref{prop_diam_infini}). D'apr\`es le lemme \ref{lemme_ptbase_compose_applibord_id}, cela signifie que cette application se d\'ecompose en strictement plus de $D$ applications qui sont ou de caract\'eristique Jonqui\`eres ou qui poss\`edent au plus $9$ points-base. Ceci implique que $\dist_{\bar{\D}}\big(\isome{\id}(\l),\isome{(\q_1\circ\dots\circ\q_n)}(\l)\big)>D$ ce qui contredit le fait que le diam\`etre est born\'e.
        \end{proof}

        \subsection{Hyperbolicit\'e du graphe de quasi-adjacence}\label{subsection_hyp_graphequasi}

        Pour montrer que le graphe de quasi-adjacence est Gromov-hyperbolique nous utilisons le crit\`ere de Bowditch \ref{thm_bow} et le fait que l'espace $\orbl$ est Gromov-hyperbolique.

        \subsubsection{Construction de sous-graphes}\label{subsubsection_construction_sous_graphes}
        Nous construisons des sous-graphes dont nous montrons qu'ils v\'erifient le crit\`ere de Bowditch.

        Dans un premier temps, nous associons \`a tout \'el\'ement de $\orbl$ un sous-graphe du graphe de quasi-adjacence. Un choix naturel du point de vue du pavage de Vorono\"{\i} est de faire correspondre \`a $c\in\orbl$ tous les sommets du graphe de quasi-adjacence correspondant aux cellules de Vorono\"{\i} auxquelles $c$ appartient :
        \[\Som_c:=\{\isome{f}(\l)\mid c\in\V(f)\}.\]

        Notons $\bar{\D}_c$ le graphe complet associ\'e aux sommets de $\Som_c$. C'est un sous-graphe de $\bar{\D}$. L'application qui \`a $c$ associe $\bar{\D}_c$ est $\Bir(\P^2)$-\'equivariante : pour tout $f\in\Bir(\P^2)$, pour tout $c\in\orbl$ :\[f\cdot\bar{\D}_c=\bar{\D}_{\isome{f}(c)}.\]
        Remarquons que pour toute classe $c\in\orbl$, le sous-graphe $\bar{\D}_c$ est connexe par arcs et de diam\`etre au plus $1,5$ puisque tous les sommets sont deux \`a deux reli\'es par une ar\^ete. Plus g\'en\'eralement \`a tout segment g\'eod\'esique $[c,c']$ de $\orbl$ nous associons un sous graphe d\'efini de la mani\`ere suivante :
        \[ \bar{\D}_{[c,c']}:=\underset{x\in[c,c']}{\bigcup}\bar{\D}_x.\]
        \begin{prop}\label{prop_connexite}
        Pour tous $c,c'\in\orbl$, il existe une suite finie $\{c_i\}_{0\leq i\leq n}$ telle que
        \[\bar{\D}_{[c,c']}=\underset{1\leq i \leq n}{\bigcup}\bar{\D}_{c_i},\]
        o\`u pour tout $0\leq i\leq n-1$ les sous-graphes $\bar{\D}_{c_i}$ et $\bar{\D}_{c_{i+1}}$ sont non-disjoints :
        \[\bar{\D}_{c_i}\cap \bar{\D}_{c_{i+1}}\neq \emptyset.\]
        \end{prop}
        \begin{proof}
        Param\'etrons le segment g\'eod\'esique $[c,c']$ par $\gamma: [0,1]\rightarrow [c,c']$. Construisons une suite $t_i$ de points de $[0,1]$ de la mani\`ere suivante. Soit $t_0=0$. Supposons le point $t_{i-1}$ construit. Si pour tout $t\in[t_{i-1},1]$, $\Som_{\gamma(t)}$ est inclus dans $\Som_{\gamma(t_{i-1})}$, le proc\'ed\'e s'arr\^ete. Sinon, posons :
                \[t_i=\inf \{ t\in \ ]t_{i-1},1] \mid \Som_{\gamma(t)}\nsubseteq\Som_{\gamma(t_{i-1})}\}.\]
                La suite $\gamma(t_i)$ construite satisfait le corollaire \cite[Corollaire 2.13]{Lovoronoi1}. Par cons\'equent, cette suite est finie. Notons $n+1$ le nombre d'\'el\'ements de cette suite. Posons pour tout $0\leq i \leq n$, $c_i=\gamma(t_i)$.
                Nous avons alors :  \[\bar{\D}_{[c,c']}=\underset{1\leq i \leq n}{\bigcup}\bar{\D}_{c_i}.\]
                Montrons \`a pr\'esent que pour tout $0\leq i\leq n-1$, l'intersection entre les sous-graphes $\bar{\D}_{c_i}$ et $\bar{\D}_{c_{i+1}}$ est non-vide.
                Fixons $i$ et consid\'erons le point $c_{i+1}$. D'apr\`es la proposition \cite[Proposition 2.12]{Lovoronoi1}, il existe une constante $\epsilon>0$ d\'ependant de la classe $c_{i+1}$ telle que pour toute application $f$, soit $c_{i+1}$ appartient \`a $\V(f)$, soit la distance entre $c_{i+1}$ et $\V(f)$ est strictement sup\'erieure \`a $\epsilon$. Cela implique que pour toute classe $c$ du segment g\'eod\'esique $[c_i,c_{i+1}[$ \`a distance inf\'erieure ou \'egale \`a $\epsilon$ de $c_{i+1}$, $\bar{\D}_{c}$ est inclus dans $\bar{\D}_{c_{i+1}}$. Comme les cellules de Vorono\"{\i} pavent l'espace $\orbl$, $\bar{\D}_{c}$ est non-vide. Notons $\isome{{f_c}}(\l)$ un sommet de ce sous-graphe. Par d\'efinition des points $\{c_i\}_{0\leq i\leq n}$, ce sommet appartient \'egalement au sous-graphe $\bar{\D}_{c_i}$. Nous avons ainsi montr\'e que l'intersection entre les deux sous-graphes $\bar{\D}_{c_i}$ et $\bar{\D}_{c_{i+1}}$ est non-vide.
        \end{proof}

        Soient $\isome{f}(\l)$ et $\isome{g}(\l)$ deux sommets de $\bar{\D}$. Construisons $\bar{\D}(\isome{f}(\l),\isome{g}(\l))$. Pour cela, consid\'erons le segment g\'eod\'esique associ\'e aux classes $\isome{f}(\l)$ et $\isome{g}(\l)$. Les sommets de $\bar{\D}(\isome{f}(\l),\isome{g}(\l))$ correspondent aux sommets associ\'es aux cellules de Vorono\"{\i} ayant une intersection non vide avec ce segment :
        \[\bar{\D}(\isome{f}(\l),\isome{g}(\l)):=\bar{\D}_{[\isome{f}(\l),\isome{g}(\l)]}.\]

        \begin{cor}\label{cor_connexite_par_arcs}   Pour tous $c,c'\in\orbl$, le sous-graphe $\bar{\D}_{[c,c']}$ est connexe par arcs. En particulier, c'est le cas de $\bar{\D}(\isome{f}(\l),\isome{g}(\l))$ pour tous $f,g\in\Bir(\P^2)$.
        \end{cor}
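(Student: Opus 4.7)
The plan is to deduce the corollary directly from Proposition \ref{prop_connexité} together with the fact, already observed just before that proposition, that each individual $\bar{\D}_c$ is a complete subgraph (hence path-connected, in fact of diameter at most $1.5$).

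Concretely, given $c,c' \in \orbl$, I would first invoke Proposition \ref{prop_connexité} to obtain a finite sequence $\{c_i\}_{0\leq i \leq n}$ such that
\[
\bar{\D}_{[c,c']} = \bigcup_{i=0}^{n} \bar{\D}_{c_i}
\qquad \text{and} \qquad
\bar{\D}_{c_i} \cap \bar{\D}_{c_{i+1}} \neq \emptyset \text{ for all } 0 \leq i \leq n-1.
\]
Each $\bar{\D}_{c_i}$ is complete on its vertex set $\Som_{c_i}$, so it is path-connected. I would then argue by induction on $i$ that $\bigcup_{j=0}^{i} \bar{\D}_{c_j}$ is path-connected: the inductive step follows because the path-connected set $\bigcup_{j=0}^{i-1} \bar{\D}_{c_j}$ meets the path-connected set $\bar{\D}_{c_i}$ (by the second conclusion of Proposition \ref{prop_connexité}, they share at least one vertex). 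This gives path-connectedness of $\bar{\D}_{[c,c']}$.

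The \emph{in particular} clause is immediate: taking $c = \isome{f}(\l)$ and $c' = \isome{g}(\l)$, which lie in $\orbl$ since the orbit of $\l$ is contained in $\orbl$, the subgraph $\bar{\D}(\isome{f}(\l),\isome{g}(\l))$ is by definition $\bar{\D}_{[\isome{f}(\l),\isome{g}(\l)]}$ and thus falls within the scope of the general statement.

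There is no real obstacle: everything has been set up by Proposition \ref{prop_connexité}, and the corollary amounts to the elementary topological fact that a finite union of path-connected sets forming a chain (each consecutive pair intersecting) is itself path-connected. The only points requiring a moment of care are recalling that the $\bar{\D}_{c_i}$ are complete (hence path-connected) and noting that the intersections provided by the proposition are non-empty \emph{as sets of vertices}, so that one can actually concatenate paths.
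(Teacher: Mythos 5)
Votre preuve est correcte et suit exactement la même démarche que celle du texte : on applique la proposition \ref{prop_connexité} pour obtenir la chaîne finie de sous-graphes $\bar{\D}_{c_i}$ deux à deux consécutivement non-disjoints, chacun étant connexe par arcs car complet, puis on conclut par le fait élémentaire qu'une telle union en chaîne est connexe par arcs. Vous explicitez simplement la récurrence de concaténation que le texte laisse implicite.
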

        \begin{proof}
        Par la proposition \ref{prop_connexite}, il existe une suite finie de classes $\{c_i\}_{0\leq i\leq n}$ telle que :
        \[\bar{\D}_{[c,c']}=\underset{1\leq i \leq n}{\bigcup}\bar{\D}_{c_i}, \] o\`u l'intersection entre $\bar{\D}_{c_i}$ et $\bar{\D}_{c_{i+1}}$ est non-vide pour tout $0\leq i\leq n-1$.
        De plus, chacun des $\bar{\D}_{c_i}$ est connexe par arcs.
        \end{proof}

        \begin{cor}\label{cor_sous-graphe_fini}
            Pour tous $c,c'\in\orbl$, le diam\`etre de $\bar{\D}_{[c,c']}$ est fini.
        \end{cor}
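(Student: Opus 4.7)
Le plan est de déduire ce corollaire directement de la Proposition \ref{prop_connexité} en exploitant le fait que chaque sous-graphe $\bar{\D}_{c_i}$ est un graphe complet, donc de diamètre borné par une constante universelle (à savoir $1,5$, comme remarqué juste avant la Proposition \ref{prop_connexité}).

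Plus précisément, étant donnés $c,c'\in\orbl$, j'appliquerais la Proposition \ref{prop_connexité} pour obtenir une suite finie $\{c_i\}_{0\leq i\leq n}$ telle que
\[
\bar{\D}_{[c,c']}=\bigcup_{0\leq i\leq n}\bar{\D}_{c_i},
\]
avec $\bar{\D}_{c_i}\cap\bar{\D}_{c_{i+1}}\neq\emptyset$ pour tout $0\leq i\leq n-1$. Je choisirais alors pour chaque $0\leq i\leq n-1$ un sommet $s_i\in\bar{\D}_{c_i}\cap\bar{\D}_{c_{i+1}}$.

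Soient maintenant $u,v$ deux sommets de $\bar{\D}_{[c,c']}$. Il existe $i\leq j$ tels que $u\in\bar{\D}_{c_i}$ et $v\in\bar{\D}_{c_j}$. Puisque chaque $\bar{\D}_{c_k}$ est complet, on a $\dist_{\bar{\D}}(u,s_i)\leq 1$, $\dist_{\bar{\D}}(s_{k},s_{k+1})\leq 1$ pour $i\leq k\leq j-2$, et $\dist_{\bar{\D}}(s_{j-1},v)\leq 1$. En concaténant ces arêtes on obtient
\[
\dist_{\bar{\D}}(u,v)\leq j-i+1\leq n+1,
\]
ce qui borne le diamètre de $\bar{\D}_{[c,c']}$ par $n+1$, donc est fini.

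Il n'y a pas réellement d'obstacle ici : toute la difficulté a déjà été absorbée dans la Proposition \ref{prop_connexité}, qui garantit la finitude de la suite $\{c_i\}$ via le Corollaire 2.13 de \cite{Lovoronoi1}. Le seul point délicat à ne pas oublier est de traiter aussi les points intérieurs aux arêtes (la métrique standard fait de chaque arête un intervalle $[0,1]$), mais cela n'ajoute qu'une constante additive (au plus $1$) à la borne ci-dessus, le résultat reste donc fini.
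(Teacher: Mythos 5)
Votre preuve est correcte et suit exactement la démarche implicite du papier : le corollaire est une conséquence directe de la Proposition \ref{prop_connexité} (décomposition finie en sous-graphes complets $\bar{\D}_{c_i}$ de diamètre au plus $1,5$, s'intersectant deux à deux consécutivement), et l'argument de concaténation que vous donnez est précisément celui que l'auteur laisse au lecteur. Aucun écart ni lacune à signaler.
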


        \subsubsection{Borne pour le diam\`etre du sous-graphe associ\'e \`a un segment de taille fix\'ee}\label{graphe_adjacent_borne_diam_segment_taille_fixee}
        Lors du corollaire \ref{cor_sous-graphe_fini}, nous avons montr\'e que pour toutes classes $c,c'\in\orbl$, le diam\`etre du sous-graphe associ\'e $\bar{\D}_{[c,c']}$ est fini. Nous montrons ici que nous pouvons borner le diam\`etre du sous-graphe $\bar{\D}_{[c,c']}$ dans $\bar{\D}$ en fonction de la longueur du segment $[c,c']$ dans $\H$.

        Nous reprenons les notations introduites dans \cite{Lovoronoi1}. Soit $c=n\l-\underset{p\in\B(\P^2)}{\sum}\lambda_pe_p$ une classe de $\orbl$. Le nombre r\'eel $n$ est sup\'erieur ou \'egal \`a $1$ et est appel\'e le \og degr\'e \fg{} de $c$. Pour tout $p\in\B(\P^2)$, le nombre r\'eel positif ou nul $\lambda_p$ est appel\'e \og la multiplicit\'e \fg{} de $c$ associ\'ee au point $p$. Le \og support \fg{} de $c$ est l'ensemble des points $p$ de $\B(\P^2)$ tels que $\lambda_p$ est non-nulle. Par construction, c'est un ensemble d\'enombrable.

        Notons $p_0$, $p_1$ et $p_2$ trois points de $\B(\P^2)$ de sorte que l'inégalité $\lambda_{p_0}\geq \lambda_{p_1}\geq \lambda_{p_2}\geq \lambda_p$ soit vérifiée pour tout $p\in\B(\P^2)\setminus\{p_0,p_1,p_2\}$. La classe $c$ est appel\'ee \og sp\'eciale \fg{} si les points $p_1$ et $p_2$ sont adh\'erents \`a $p_0$ et si $n-\lambda_{p_0}-\lambda_{p_1}-\lambda_{p_2}<0$. Ceci implique par positivit\'e des exc\`es (D\'efinition \ref{proprietes_c}.\hyperref[propriete_classe_exces]{3}) que $\lambda_{p_0}>\frac{n}{2}$. Dans ce cas, par l'in\'egalit\'e de B\'ezout (D\'efinition \ref{proprietes_c}.\hyperref[propriete_classe_bezout]{4}), $p_0$ est l'unique point de $c$ de multiplicit\'e maximale.

        Dans la suite, nous notons $p_0$ un point du support de $c$ de multiplicit\'e maximale : $\lambda_{p_0}\geq\lambda_p$, pour tout $p\in\B(\P^2)$.
        \begin{lemme}\label{lemme_classe_speciale_mult}
        Soit $c=n\l-\sum_{p\in\B(\P^2)}\lambda_pe_{p}\in \V(\id)$ une classe sp\'eciale. Soient deux points $r$ et $q$ distincts de $p_0$ v\'erifiant $\lambda_r \leq \lambda_q$. Si $p_0$, $q$ et $r$ sont align\'es ou le support d'une application quadratique, alors :
        \[\lambda_r < \frac{n}{4}.\]
        \end{lemme}
        \begin{proof}
        Comme $c$ est une classe sp\'eciale, nous avons d'apr\`es la remarque \cite[Remarque 3.6]{Lovoronoi1} l'in\'egalit\'e : $\lambda_0>\frac{n}{2}$.
        Par hypoth\`ese, $c$ appartient \`a $\V(\id)$ et les points $p_0$, $r$ et $q$ sont soit align\'es, soit forment le support d'une application quadratique, ainsi d'apr\`es la proposition \cite[Proposition 3.5]{Lovoronoi1} nous avons $0\leq n-\lambda_{p_0}-\lambda_r-\lambda_q$. Cela implique que :
            \[2\lambda_r\leq \lambda_q+\lambda_{r}\leq n-\lambda_{p_0}<\frac{n}{2}.\]

\vspace{-0.75cm}
        \end{proof}

Le lemme suivant est le point cl\'e technique qui nous sera utile
pour montrer la Gromov-hyperbolicit\'e du graphe de quasi-adjacence.
Une des difficult\'es vient du fait que la distance entre deux
cellules $\V(f_1)$ et $\V(f_2)$ adjacentes \`a la cellule $\V(\id)$
mais pas quasi-adjacente entres elles, n'est pas forc\'ement
r\'ealis\'ee par la distance entre deux classes
$c_1\in\V(\id)\cap\V(f_1)$ et $c_2\in\V(\id)\cap\V(f_2)$.
\begin{lemme}\label{lemme_nbr_intersection}
Il existe $K>1$ telle que pour toute classe
$c_1\in\V(f_1)\cap\V(\id)$ de degr\'e $n$ et pour toute application
$f_2\in\Bir(\P^2)$ telle que
$\dist_{\bar{\D}}(\isome{\id}(\l),\isome{{f_2}}(\l))=1$ et
$\dist_{\bar{\D}}(\isome{{f_1}}(\l),\isome{{f_2}}(\l))\geq 2$, nous
avons l'in\'egalit\'e :
\[c_1\cdot \isome{{f_2}}(\l)\geq Kn.\]
\end{lemme}

\begin{proof}
Par hypoth\`ese, les cellules $\V(f_1)$ et $\V(f_2)$ ne sont pas
quasi-adjacentes. Ceci implique que les cellules $\V(\id)$,
$\V(f_1)$ et $\V(f_2)$ n'ont pas une classe en commun \`a l'infini.
Le corollaire \cite[Corollaire~5.21]{Lovoronoi1} implique que
\begin{enumerate}
\item[1)] \phantomsection \label{enum_jonq} si les applications $f_1$ et $f_2$ sont des applications de caract\'eristiques Jonqui\`eres alors le ou les (dans le cas quadratique) point(s)-base de multiplicit\'e maximale de ces applications sont disjoints,
\item[2)]  l'union des points-base de $f_1^{-1}$ et de $f_2^{-1}$ :
\begin{enumerate}
\item[a)] \phantomsection \label{enum_card} est ou bien de cardinal sup\'erieur ou \'egal \`a $10$,
\item[b)] \phantomsection \label{enum_align} ou bien contient $4$ points align\'es,
\item[c)] \phantomsection \label{enum_conique} ou bien contient $7$ points sur une conique,
\item[d)] \phantomsection \label{enum_adherents} ou bien contient deux points adh\'erents \`a un troisi\`eme point de cet ensemble.
\end{enumerate}
\end{enumerate}
    Notons $d$ le degr\'e de $f_2$ et $\{m_p\}_{p\in\Bs(f_2^{-1})}$ les multiplicit\'es des points-base de $f_2^{-1}$. Rappelons que \[c_1\cdot \isome{{f_2}}(\l)=dn-\underset{p\in\Bs(f_2^{-1})}{\sum}m_p\lambda_p.\] D'apr\`es les \'equations de Noether, nous avons l'\'egalit\'e $3(d-1)=\underset{p\in\Bs(f_2^{-1})}{\sum}m_p$. Par cons\'equent, nous pouvons partitionner le multi-ensemble compos\'e des points-base de $f_2^{-1}$ compt\'es avec leur multiplicit\'e en $d-1$ triplets de sorte que chaque triplet soit compos\'e de points deux \`a deux disjoints (\cite[Lemme 3.2]{Lovoronoi1}). Notons $T$ une telle partition. Nous avons alors l'\'egalit\'e suivante :
    \begin{equation}\label{eq_triplets}
    c_1\cdot \isome{{f_2}}(\l)=n +\sum\limits_{\{\!\{ p_i,p_j,p_k\}\!\}\in T}(n-\lambda_{p_i}-\lambda_{p_j}-\lambda_{p_k}).
    \end{equation}

Diff\'erencions deux cas selon si la classe $c_1$ est sp\'eciale ou
non.

\begin{enumerate}[leftmargin=0pt, labelwidth=-24pt, listparindent=\parindent]
\item[$\bullet$] Pla\c{c}ons-nous dans le cas o\`u la classe $c_1$ n'est pas sp\'eciale.
\begin{fait}
{\rm S'il existe un point-base $p$ de $f_2^{-1}$ tel que $
\lambda_p\leq\frac{5n}{16}$ alors $c_1\cdot \isome{{f_2}}(\l)\geq
\frac{49}{48}n$.}
\end{fait}
\begin{proof}
    Comme la classe $c_1$ n'est pas sp\'eciale et qu'elle appartient \`a la cellule $\V(\id)$ la somme de ses trois plus grandes multiplicit\'es (not\'ees $\lambda_{p_0}\geq\lambda_{p_1}\geq\lambda_{p_2}$) est inf\'erieure ou \'egale \`a son degr\'e d'apr\`es \cite[Corollaire 3.7]{Lovoronoi1}. Par cons\'equent, pour tout triplet $\{\!\{ p_i,p_j,p_k\}\!\}\in T$ nous avons :
    \[n-\lambda_{p_i}-\lambda_{p_j}-\lambda_{p_k}\geq n-\lambda_{p_0}-\lambda_{p_1}-\lambda_{p_2}\geq 0.\]
    Ainsi, pour tout triplet $\{\!\{ p_i,p_j,p_k\}\!\}\in T$ l'in\'egalit\'e suivante est satisfaite :
    \begin{equation}\label{eq_c1f2_triplet}
    c_1\cdot \isome{{f_2}}(\l)\geq n + (n-\lambda_{p_i}-\lambda_{p_j}-\lambda_{p_k}).
    \end{equation}
    Par hypoth\`ese, il existe un point-base de $f_2^{-1}$ tel que sa multiplicit\'e pour $c$ soit inf\'erieure ou \'egale \`a $\frac{5n}{16}$. Notons ce point $q$.
    Consid\'erons deux cas selon si $f_1$ est une application de caract\'eristique Jonqui\`eres ou pas.
    \begin{enumerate}[leftmargin=0pt, labelwidth=-24pt]
        \item[$*$] Si $f_1$ n'est pas de caract\'eristique Jonqui\`eres, d'apr\`es le th\'eor\`eme \cite[Th\'eor\`eme 4.1]{Lovoronoi1} toutes les multiplicit\'es de $c$ sont inf\'erieures ou \'egales \`a $\frac{n}{3}$. Par cons\'equent, en consid\'erant un triplet de $T$ o\`u le point $q$ appara\^{\i}t nous avons d'apr\`es l'\'equation (\ref{eq_c1f2_triplet}) la minoration suivante : \[c_1\cdot \isome{{f_2}}(\l)\geq n+ (n-2\frac{n}{3}-\frac{5n}{16})=\frac{49}{48}n .\]
        \item[$*$] Si $f_1$ est de caract\'eristique Jonqui\`eres de degr\'e sup\'erieur ou \'egal \`a $3$, tout point distinct du point-base maximal $p_0$ de $f_1^{-1}$ a une multiplicit\'e pour $c$ inf\'erieure ou \'egale \`a $\frac{n}{3}$ (voir \cite[Th\'eor\`eme 4.1]{Lovoronoi1}). De plus, par le point \hyperref[enum_jonq]{1)}, $f_2$ ne peut pas \^etre une application de caract\'eristique Jonqui\`eres en ce point-l\`a et ainsi $m_0<d-1$. Ceci signifie qu'il existe un triplet $t_1$ ne contenant pas le point $p_0$. Quitte \`a \'echanger le point $q$ avec un point du triplet $t_1$ qui n'appara\^{\i}t pas dans un triplet contenant le point $q$, nous pouvons supposer qu'il existe un triplet contenant le point $q$ et ne contenant pas le point $p_0$. Nous obtenons la m\^eme minoration que dans le cas pr\'ec\'edent.
        Si $f_1$ est une application quadratique alors par \cite[Th\'eor\`eme~4.1]{Lovoronoi1} toutes les multiplicit\'es de $c$ autres que $\lambda_{p_0}$ et $\lambda_{p_1}$ sont inf\'erieures ou \'egales \`a $\frac{n}{3}$. De plus, par hypoth\`ese, si $f_2$ est une application de caract\'eristique Jonqui\`eres, le (ou les) point-base de multiplicit\'e maximale de $f_2$ n'est ni $p_0$ ni $p_1$ ni $p_2$. Ainsi, par le m\^eme argument que pr\'ec\'edemment, il existe un triplet $t_1$ ne contenant pas le point $p_0$. Si ce triplet contient le point $p_1$, comme $m_1<d-1$ il existe un triplet $t'$ diff\'erent ne contenant pas le point $p_1$. Nous \'echangeons alors un point du triplet $t'$ diff\'erent de $p_0$ avec le point $p_1$ qui se trouve dans le triplet $t_1$. De m\^eme que pr\'ec\'edemment, si $t_1$ ne contient pas le point $q$ nous pouvons \'echanger $q$ avec un des trois points de $t_1$ qui n'apparaît pas dans un triplet contenant $q$. Ainsi tous les triplets sont positifs et un triplet contient $q$ mais pas $p_0$ et $p_1$ et nous obtenons la m\^eme majoration que dans le cas pr\'ec\'edent.
    \end{enumerate}
\vspace{-0,6cm}
\end{proof}

Il nous reste maintenant \`a montrer qu'il existe toujours un
point-base de $f_2^{-1}$ dont la multiplicit\'e associ\'ee est
inf\'erieure ou \'egale \`a $\frac{5n}{16}$.
\begin{fait}
{\rm Si $\lambda_{p_0}\geq \frac{3n}{8}$ alors il existe un
point-base $p$ de $f_2^{-1}$ tel que $ \lambda_p\leq\frac{5n}{16}$.}
\end{fait}
\begin{proof}
Dans ce cas, par le th\'eor\`eme \cite[Th\'eor\`eme
4.1]{Lovoronoi1}, $f_1$ est une application de caract\'eristique
Jonqui\`eres dont l'inverse a pour base maximal le point $p_0$. Si
$f_1$ est de degr\'e sup\'erieur ou \'egal \`a $3$, toujours
d'apr\`es le m\^eme th\'eor\`eme nous obtenons la majoration
attendue pour tout point $p\in\B(\P^2)$ diff\'erent de $p_0$ :
\[\lambda_p\leq \frac{n-\lambda_{p_0}}{2}\leq \frac{5n}{16}.\]
Si $f_1$ est de degr\'e $2$, d'apr\`es \cite[Th\'eor\`eme
4.1]{Lovoronoi1}, nous avons $
n=\lambda_{p_0}+\lambda_{p_1}+\lambda_{p_2}$. Ainsi, pour tout point
$p\in\B(\P^2)$ diff\'erent de $p_0$ et de $p_1$ nous obtenons :
\[\lambda_p\leq\lambda_{p_2}\leq\frac{\lambda_{p_2}+\lambda_{p_1}}{2} =\frac{n-\lambda_{p_0}}{2}\leq\frac{5n}{16}.\]
Par cons\'equent, tout point-base de $f_2^{-1}$ diff\'erent de $p_0$
et de $p_1$ poss\`ede la propri\'et\'e attendue.
\end{proof}

D'apr\`es \cite[Th\'eor\`eme 4.1]{Lovoronoi1}, comme la classe $c_1$
n'est pas sp\'eciale, nous avons :
\begin{equation}\label{eq_mult_f1_f2}
\lambda_q\geq\lambda_p \text{ pour tout } q\in\Bs(f_1^{-1}) \text{
et pour tout }p\notin\Bs(f_1^{-1}).
\end{equation}
Nous avons besoin de la variante suivante en consid\'erant des
points-base de $f_2^{-1}$.

\begin{fait}\label{fait_pt_min_f_2}
{\rm    Il existe un point-base de $f_2^{-1}$,  $p\in\Bs(f_2^{-1})$,
tel que pour tout point-base $q\in\Bs(f_1^{-1})$, les
multiplicit\'es associ\'ees satisfont : $\lambda_p\leq \lambda_q$. }
\end{fait}
\begin{proof}
    Si $\Bs(f_2^{-1})$ n'est pas inclus dans $\Bs(f_1^{-1})$ alors il existe $p\in \Bs(f_2^{-1})$ tel que $p\notin \Bs(f_1^{-1})$. Ainsi, d'apr\`es l'in\'egalit\'e (\ref{eq_mult_f1_f2}), pour tout point $q\in\Bs(f_1^{-1})$, $\lambda_p\leq \lambda_q$ et le fait est prouv\'e dans ce cas.

    Sinon, nous distinguons deux cas suivant que $f_1$ est de caract\'eristique Jonqui\`eres ou non. Si $f_1$ n'est pas de caract\'eristique Jonqui\`eres alors d'apr\`es \cite[Th\'eor\`eme 4.1]{Lovoronoi1}, pour tout point $p\in \Bs(f_2^{-1})$, $\lambda_p=\frac{n}{3}=\lambda_q$ pour tout point $q\in\Bs(f_1^{-1})$ ce qui prouve le fait dans ce cas.
    Supposons maintenant que $f_1^{-1}$ est une application de caract\'eristique Jonqui\`eres. Si elle est de degr\'e $2$, comme nous sommes dans le cas o\`u les points-base de $f_2^{-1}$ sont inclus dans ceux de $f_1^{-1}$,  $f_2^{-1}$ poss\`ede les m\^emes trois points-base et est une application quadratique. Or par hypoth\`ese \ref{enum_jonq}, ceci est impossible.
    L'application $f_1^{-1}$ est par cons\'equent de degr\'e sup\'erieur ou \'egal \`a trois. Comme $f_2^{-1}$ poss\`ede au moins trois points-base, il en existe un diff\'erent de $p_0$, not\'e $p$ et par \cite[Th\'eor\`eme 4.1]{Lovoronoi1}, $\lambda_p=\frac{n-\lambda_0}{2}=\lambda_q\leq \lambda_{p_0}$ pour tout point $q\in\Bs(f_1^{-1})$ diff\'erent de $p_0$ ce qui finit de démontrer le fait.
\end{proof}

Dans le cas o\`u $\lambda_{p_0}$ est strictement inf\'erieur \`a
$\frac{3n}{8}$, nous distinguons les $4$ possibilit\'es :
\hyperref[enum_card]{a)}, \hyperref[enum_align]{b)},
\hyperref[enum_conique]{c)}, \hyperref[enum_adherents]{d)}, et
montrons qu'il existe un point-base de $f_2^{-1}$ dont la
multiplicit\'e associ\'ee est inf\'erieure ou \'egale \`a
$\frac{5n}{16}$.

\begin{fait}
{\rm Si $\lambda_{p_0}< \frac{3n}{8}$ et si nous sommes dans le cas
\hyperref[enum_card]{a)} alors il existe un point-base $p$ de
$f_2^{-1}$ tel que $ \lambda_p\leq\frac{5n}{16}$.}
\end{fait}
\begin{proof}
Notons $\lambda_{\min}$ la plus petite multiplicit\'e pour $c$
correspondant \`a un point-base de l'union des points-base de
$f_1^{-1}$ et de $f_2^{-1}$. Comme l'union des points-base de
$f_1^{-1}$ et de $f_2^{-1}$ est de cardinal sup\'erieur ou \'egal
\`a $10$, nous avons alors par l'in\'egalit\'e contre
l'anti-canonique
\ref{proprietes_c}.\hyperref[propriete_classe_canonique]{2)} :
\[10\lambda_{\min}\leq \sum\limits_{p\in\Bs(f_1^{-1})\cup\Bs(f_2^{-1})}\lambda_p\leq\sum\limits_{p\in\B(\P^2)}\lambda_p\leq 3n.\]
Ainsi, $\lambda_{\min}\leq \frac{3n}{10}\leq \frac{5n}{16}$. Si
cette multiplicit\'e n'est pas associ\'ee \`a un point-base de
$f_2^{-1}$, elle correspond donc \`a un point-base de $f_1^{-1}$ et
par le fait \ref{fait_pt_min_f_2}, il existe $p\in \Bs(f_2^{-1})$
dont la multiplicit\'e associ\'ee est inf\'erieure \`a
$\lambda_{\min}$: $\lambda_{p}\leq \lambda_{\min}\leq\frac{5n}{16}$.
\end{proof}
\begin{fait}
{\rm Si $\lambda_{p_0}< \frac{3n}{8}$ et si nous sommes dans le cas
\hyperref[enum_align]{b)} alors il existe un point-base $p$ de
$f_2^{-1}$ tel que $ \lambda_p\leq\frac{5n}{16}$.}
\end{fait}
\begin{proof}
Si $4$ des points de l'union des points-base de $f_1^{-1}$ et de
$f_2^{-1}$ sont align\'es alors en notant $\lambda_{\min}$ la plus
petite des $4$ multiplicit\'es et $p_{\min}$ le point associ\'e,
nous avons par l'in\'egalit\'e de B\'ezout
\ref{proprietes_c}.\hyperref[propriete_classe_bezout]{4)} :
\[4\lambda_{\min}\leq n.\]
Par cons\'equent, $\lambda_{\min}\leq \frac{n}{4}\leq\frac{5n}{16}$.
De plus, comme pr\'ec\'edemment, si $p_{\min}\notin \Bs(f_2^{-1})$,
par le fait \ref{fait_pt_min_f_2}, il existe un point $p\in
\Bs(f_2^{-1})$ tel que $\lambda_p\leq \lambda_{\min}\leq
\frac{5n}{16}$ ce qui ach\`eve la preuve.
\end{proof}
Le cas o\`u $7$ points sont sur une conique se prouve de fa\c{c}on
similaire.
\begin{fait}
{\rm Si $\lambda_{p_0}< \frac{3n}{8}$ et si nous sommes dans le cas
\hyperref[enum_conique]{c)} alors il existe un point-base $p$ de
$f_2^{-1}$ tel que $ \lambda_p\leq\frac{5n}{16}$.}
\end{fait}
\begin{fait}
{\rm Si $\lambda_{p_0}< \frac{3n}{8}$ et si nous sommes dans le cas
\hyperref[enum_adherents]{d)} alors il existe un point-base $p$ de
$f_2^{-1}$ tel que $ \lambda_p\leq\frac{5n}{16}$.}
\end{fait}
\begin{proof}
Consid\'erons le cas o\`u $2$ points $q_1$ et $q_2$ de l'ensemble de
l'union de points-base de $f_1^{-1}$ et de $f_2^{-1}$ sont
adh\'erents \`a un troisi\`eme point $q_0$. En notant
$\lambda_{\min}$ la plus petite des deux multiplicit\'es associ\'ees
\`a ces points pour $c$, nous avons par positivit\'e des exc\`es :
\[2\lambda_{\min}\leq \lambda_{q_1}+\lambda_{q_2}\leq \lambda_{q_0}\leq \frac{3n}{8}.\]
Comme pr\'ec\'edemment, si le point dont la multiplicit\'e est
$\lambda_{\min}$ n'est pas un point-base de $f_2^{-1}$ alors c'est
un point-base de $f_1^{-1}$ et nous pouvons conclure par le fait
\ref{fait_pt_min_f_2}.
\end{proof}
Ceci ach\`eve la preuve du lemme \ref{lemme_nbr_intersection} dans
le cas o\`u la classe $c_1$ n'est pas sp\'eciale.

\item[$\bullet$] Int\'eressons-nous maintenant au cas o\`u la classe $c_1$ est sp\'eciale. Par le th\'eor\`eme \cite[Th\'eor\`eme~4.1]{Lovoronoi1} l'application $f_1$ est une application de caract\'eristique Jonqui\`eres et le point-base maximal de $f_1^{-1}$ est $p_0$. De plus, notons $p_1$ et $p_2$ deux points adh\'erents au point $p_0$ tels que $\lambda_{p_0}\geq\lambda_{p_1}\geq \lambda_{p_2}\geq \lambda_p$ pour tout $p\in\B(\P^2)\setminus\{p_0,p_1,p_2\}$.

Comme $f_2$ est le germe d'une cellule quasi-adjacente \`a $\V(\id)$
soit elle est de caract\'eristique Jonqui\`eres soit elle ne
poss\`ede pas deux points-base adh\'erents \`a un m\^eme troisi\`eme
de ses points-base (Th\'eor\`eme~\ref{cor_cellules_quasi_adjacentes}). Si elle est de
caract\'eristique Jonqui\`eres, d'apr\`es \ref{enum_jonq}, le point
$p_0$ n'est pas le point-base de multiplicit\'e maximal de
$f_2^{-1}$ (ou l'un des dans le cas o\`u $f_2$ est quadratique). Par
cons\'equent, $f_2^{-1}$ ne peut pas poss\'eder deux points-base
adh\'erents au point $p_0$. Ainsi, quelle que soit $f_2$, l'application
$f_2^{-1}$ ne poss\`ede pas deux points adh\'erents au point $p_0$.

Parmi les points-base de $f_2^{-1}$ diff\'erents de $p_0$ (si ce
dernier appartient \`a $\Bs(f_2^{-1})$), consid\'erons-en deux de
multiplicit\'es maximales pour $c$. Notons-les $q$ et $r$ avec
$\lambda_{q}\geq\lambda_{r}$. Comme ces deux points ne sont pas
adh\'erents au point $p_0$ et par positivit\'e des exc\`es pour $c$
\ref{proprietes_c}.\hyperref[propriete_classe_exces]{3)}, les trois
points $p_0$, $q$ et $r$ sont align\'es ou forment le support d'une
application quadratique. Comme $c$ appartient \`a $V(\id)$ nous
obtenons par la proposition \cite[Proposition 3.5]{Lovoronoi1} que
pour tout triplet $(p_i,p_j,p_k)$ de points-base de $f_2^{-1}$ ne
contenant pas deux fois le point $p_0$ ni deux fois le point $q$ :
\begin{equation}\label{pt_grand_f_2}
n-\lambda_{p_i}-\lambda_{p_j}-\lambda_{p_k}\geq
n-\lambda_{p_0}-\lambda_{q}-\lambda_{r}\geq 0.
\end{equation}
De plus, par le lemme \ref{lemme_classe_speciale_mult},
$\lambda_{r}\leq\frac{n}{4}$ et c'est le cas de toutes les
multiplicit\'es pour $c$ associ\'ees aux points-base de $f^{-1}_2$
hors $p_0$ et $p_{q}$.

Si l'application $f^{-1}_2$ n'est pas une application de
caract\'eristique Jonqui\`eres de point-base maximal le point $q$
alors les multiplicit\'es $m_{p_0}$ et $m_{q}$ sont strictement
inf\'erieures \`a $d-1$. Par cons\'equent, nous pouvons partitionner
le multi-ensemble des points-base de $f_2^{-1}$ compt\'es avec
multiplicit\'e en $d-1$ triplets de sorte qu'un triplet ne contienne
pas $p_0$ et $q$ et que tous les autres contiennent au plus une fois
le point $p_0$ et une fois le point $q$. Par l'\'equation
\eqref{pt_grand_f_2}, les termes de la somme \eqref{eq_triplets}
associ\'es \`a ces triplets sont tous positifs. De plus, le terme
associ\'e au triplet ne contenant pas les points $p_0$ et $q$ est
minor\'e par : $n-\frac{3n}{4}=\frac{n}{4}$. Ainsi, \[c_1\cdot
\isome{{f_2}}(\l)\geq \frac{5}{4}n.\]

Si maintenant le point $q$ est un point-base maximal de
l'application $f_2^{-1}$ de caract\'eristique Jonqui\`eres alors le
point $q$ est dans $\P^2$ et les points $p_0$, $p_1$ et $q$ sont
soit align\'es soit le support d'une application quadratique. Le
lemme \ref{lemme_classe_speciale_mult} implique que $\lambda_{q}\leq
\frac{n}{4}$ et c'est le cas de tous les points-base de $f^{-1}_2$
except\'e $p_0$ si c'est un point-base de $f_2^{-1}$. Comme
pr\'ec\'edemment, la multiplicit\'e $m_{p_0}$ est strictement
inf\'erieure \`a $d-1$ et il existe donc au moins un triplet o\`u le
point $p_0$ n'appara\^{\i}t pas. Par cons\'equent, le terme de la
somme \eqref{eq_triplets} correspondant au triplet o\`u le point
$p_0$ n'appara\^{\i}t pas est minor\'e par :
$n-\frac{3n}{4}=\frac{n}{4}$. Tous les autres triplets sont
constitu\'es d'indices deux \`a deux disjoints ainsi d'apr\`es
l'in\'egalit\'e \eqref{pt_grand_f_2} ils sont tous positifs ou nuls.
Par cons\'equent dans ce cas-l\`a nous obtenons la m\^eme minoration
\[c_1\cdot \isome{{f_2}}(\l)\geq \frac{5}{4}n.\] Ceci ach\`eve la
preuve dans le cas o\`u la classe $c_1$ est sp\'eciale et par
cons\'equent celle du lemme \ref{lemme_nbr_intersection}.
\end{enumerate}

\vspace{-0,67cm}
\end{proof}

        \begin{lemme}\label{lemme_dist_petite_nb_inter}
        Soient $c_1\in\V(\id)$, $c_2\in\orbl$ et $\epsilon>0$ tels que $\dist(c_1,c_2)\leq \epsilon$.
        Alors, pour toute application $f_2\in\Bir(\P^2)$ telle que $c_2\in\V(f_2)$, nous avons l'in\'egalit\'e :
       \[0\leq1-\dfrac{c_1\cdot\l}{c_1\cdot \isome{{f_2}}(\l)}\leq 2\epsilon.\]
        \end{lemme}

        \begin{proof}
        Par la d\'efinition des cellules de Vorono\"{\i}, nous avons d'une part \[\dist(c_1,\l)\leq\dist(c_1,\isome{{f_2}}(\l)) \text{ et }  \dist(c_2,\isome{{f_2}}(\l))\leq\dist(c_2,\l)\] et d'autre part, par l'in\'egalit\'e triangulaire nous obtenons :
        \begin{equation}\label{dist_c1_f_2}
        \dist(c_1,\isome{{f_2}}(\l)) \leq \epsilon+ \dist(c_2,\isome{{f_2}}(\l))\leq \epsilon+\dist(c_2,\l)\leq 2\epsilon +\dist(c_1,\l).
        \end{equation}

         Par cons\'equent nous avons l'encadrement suivant :
         \begin{equation*}
        0\leq \dist(c_1,\isome{{f_2}}(\l))-\dist(c_1,\l)\leq 2\epsilon.
         \end{equation*}

        En utilisant la majoration pr\'ec\'edente et par concavit\'e de la fonction $\argcosh$ nous obtenons l'in\'egalit\'e annonc\'ee :
        \begin{align*}
        2\epsilon \geq \dist(c_1,\isome{{f_2}}(\l))-\dist(c_1,\l)&=\argcosh(c_1\cdot\isome{{f_2}}(\l))-\argcosh(c_1\cdot\l)\\
        &\geq \frac{c_1\cdot\isome{{f_2}}(\l)-c_1\cdot\l}{\sqrt{(c_1\cdot\isome{{f_2}}(\l))^2-1}}\\
        &\geq \frac{c_1\cdot\isome{{f_2}}(\l)-c_1\cdot\l}{c_1\cdot\isome{{f_2}}(\l)}= 1-\frac{c_1\cdot\l}{c_1\cdot\isome{{f_2}}(\l)}.
        \end{align*}

\vspace{-0.9cm}
        \end{proof}

\medskip

Soient $E_1$ et $E_2$ deux sous-ensembles de $\orbl$. Nous
d\'efinissons :
\[\dist(E_1,E_2)=\inf\{\dist(c_1,c_2)\mid x_1\in E_1, \ x_2\in E_2\}.\]
Attention, bien que souvent appel\'ee \og distance \fg{} entre $E_1$
et $E_2$ ce n'est pas une distance sur les sous-ensembles de
$\orbl$.

\begin{lemme}\label{lemme_distance_cellules_disjointes}
    Il existe $M>0$ tel que pour toutes applications $f_1,f_2\in\Bir(\P^2)$ telles que $\dist_{\bar{\D}}(\isome{{f_1}}(\l),\isome{{f_2}}(\l))= 2$, nous avons :
    \[\dist(\V(f_1),\V(f_2))\geq M.\]
\end{lemme}

\begin{proof}
Soient $c\in\V(f_1)$ et $c'\in\V(f_2)$. Montrons que
$\dist(c,c')\geq M$. Consid\'erons le segment $[c,c']$. Quitte \`a
faire agir le germe de la cellule travers\'ee juste apr\`es
$\V(f_1)$, nous pouvons supposer que c'est la cellule associ\'ee \`a
l'identit\'e et que la cellule $\V(f_1)$ est adjacente \`a la
cellule $\V(\id)$. Notons $c_1$ la classe du segment $[c,c']$ qui se
trouve dans l'intersection $\V(\id)\cap\V(f_1)$ et $n$ son degr\'e.

Consid\'erons dans un premier temps le cas o\`u la cellule $\V(f_2)$
est quasi-adjacente \`a $\V(\id)$. Notons $c_2$ la classe de
l'intersection $[c,c']\cap\V(f_2)$ qui minimise la distance \`a $c$
:
\[\dist(c,c_2)\leq \dist(c,y) \text{ pour tout }  y\in\V(f_2)\cap[c,c']. \]
Reprenons la constante $K>1$ du lemme \ref{lemme_nbr_intersection}
et posons $M=\frac{K-1}{3K}$. Montrons que la distance entre $c_1$
et $c_2$ est sup\'erieure ou \'egale \`a $M$. Supposons le contraire
alors par le lemme \ref{lemme_dist_petite_nb_inter}, nous obtenons :
\[1-\frac{c_1\cdot\l}{c_1\cdot \isome{{f_2}}(\l)}\leq 2M.\] Or
d'apr\`es le lemme \ref{lemme_nbr_intersection}, nous avons $ Kn\leq
c_1\cdot \isome{{f_2}}(\l)$, ce qui implique la contradiction $\frac{K-1}{K}\leq 2M =\frac{2(K-1)}{3K}$.

Pla\c{c}ons-nous maintenant dans le cas o\`u la cellule $\V(f_2)$
n'est pas quasi-adjacente \`a $\V(\id)$. Montrons qu'il existe trois
cellules travers\'ees par le segment $[c,c']$ ayant la configuration
pr\'ec\'edente. Consid\'erons les cellules travers\'ees par le
segment $[c,c']$ et consid\'erons le premier germe $g_1$ tel que les
cellules $\V(f_1)$ et $\V(g_1)$ soient \`a distance $2$ dans le
graphe de quasi-adjacence
$\dist_{\bar{\D}}(\isome{{f_1}}(\l),\isome{{g_1}}(\l))= 2$. Par
cons\'equent, celle juste avant not\'ee $\V(g)$ est \`a distance $1$
de $\V(f_1)$. Alors, les cellules $\V(g)$ et $\V(g_1)$ sont
adjacentes entre elles et $\V(g)$ et $\V(f_1)$ sont quasi-adjacentes
entre elles. Nous nous retrouvons dans le cas pr\'ec\'edent o\`u $g$
remplace $\id$, $g_1$ remplace $f_1$ et $f_1$ remplace $f_2$. Ceci
ach\`eve la preuve du lemme.
\end{proof}

\begin{prop}\label{prop_diam_borne_segment}
    Pour toute constante $L>0$, il existe $B(L)>0$ tel que pour tous $c,c'\in\orbl$ v\'erifiant $\dist(c,c')\leq L$, le diam\`etre du graphe associ\'e au segment est born\'e par $B(L)$ : \[\diam(\bar{\D}_{[c,c']})\leq B(L).\]
\end{prop}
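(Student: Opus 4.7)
Le plan consiste à utiliser, sous forme contraposée, le Lemme \ref{lemme_distance_cellules_disjointes}. Une inspection de sa démonstration montre qu'elle établit en fait un énoncé légèrement plus fort : dès que $\dist_{\bar{\D}}(\isome{f_1}(\l), \isome{f_2}(\l))\geq 2$, on a $\dist(\V(f_1), \V(f_2))\geq M$. En effet, dans le second cas de la preuve on choisit le long du segment $[c,c']$ le premier germe $g_1$ avec $\dist_{\bar{\D}}(\isome{f_1}(\l), \isome{g_1}(\l)) = 2$, construction qui n'utilise à aucun moment que $\dist_{\bar{\D}}(\isome{f_1}(\l), \isome{f_2}(\l))$ vaille exactement $2$. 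Par contraposition, toute paire $(\V(f_1), \V(f_2))$ avec $\dist(\V(f_1), \V(f_2)) < M$ vérifie $\dist_{\bar{\D}}(\isome{f_1}(\l), \isome{f_2}(\l)) \leq 1$.

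Étant donnés $c, c'\in\orbl$ avec $\dist(c,c')\leq L$, je paramétriserais le segment géodésique $[c,c']$ par abscisse curviligne et le découperais en $N := \lceil 2L/M\rceil$ sous-segments de longueur au plus $M/2$, d'extrémités $c = y_0, y_1, \dots, y_N = c'$. Comme les cellules de Voronoï recouvrent $\orbl$, pour chaque $i$ on peut choisir $h_i\in\Bir(\P^2)$ tel que $y_i\in\V(h_i)$, fournissant $N+1$ sommets privilégiés de $\bar{\D}_{[c,c']}$. Pour toute application $f\in\Bir(\P^2)$ dont la cellule $\V(f)$ rencontre le segment en un point $x\in[y_i, y_{i+1}]$, on a $\dist(\V(f), \V(h_i)) \leq \dist(x, y_i) \leq M/2 < M$, et donc $\dist_{\bar{\D}}(\isome{f}(\l), \isome{h_i}(\l)) \leq 1$ par l'énoncé étendu ci-dessus ; de même $\dist_{\bar{\D}}(\isome{h_i}(\l), \isome{h_{i+1}}(\l)) \leq 1$ pour tout $i\in\{0,\dots,N-1\}$.

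Pour conclure, si $\isome{f}(\l)$ et $\isome{g}(\l)$ sont deux sommets quelconques de $\bar{\D}_{[c,c']}$, avec $\V(f)$ rencontrant $[y_i, y_{i+1}]$ et $\V(g)$ rencontrant $[y_j, y_{j+1}]$, la concaténation $\isome{f}(\l) - \isome{h_i}(\l) - \isome{h_{i+1}}(\l) - \cdots - \isome{h_j}(\l) - \isome{g}(\l)$ fournit un chemin contenu dans $\bar{\D}_{[c,c']}$ (chaque $\V(h_k)$ rencontrant $[c,c']$) et de longueur au plus $|i-j|+2 \leq N+2$. Il suffira alors de poser $B(L) := \lceil 2L/M\rceil + 2$. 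Le point le plus délicat à rédiger sera la justification soigneuse de l'extension du Lemme \ref{lemme_distance_cellules_disjointes} au cas $\dist_{\bar{\D}}\geq 2$, mais elle suit sans modification de la démonstration déjà rédigée ; une fois cette extension acquise, le reste est une application combinatoire directe de la couverture de $\orbl$ par les cellules de Voronoï.
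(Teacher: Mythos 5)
Votre preuve est correcte et repose sur le même ingrédient clé que celle du texte, à savoir le lemme \ref{lemme_distance_cellules_disjointes}, mais l'organisation est réellement différente. Le texte procède par une subdivision \emph{gloutonne} du segment : $t_i$ est le premier instant où apparaît une cellule à distance exactement $2$ (dans $\bar{\D}$) de la cellule de référence précédente, et c'est le lemme, pris sous sa forme littérale \og $\dist_{\bar{\D}}=2$ \fg{}, qui garantit que deux instants consécutifs sont séparés d'au moins $M$, donc que le procédé s'arrête après au plus $\lceil L/M\rceil+1$ étapes, chaque tronçon ne rencontrant que des cellules à distance au plus $1$ de sa cellule de référence. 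Vous inversez la logique : subdivision uniforme en tronçons de longueur au plus $M/2$ fixée à l'avance, puis contraposée du lemme pour conclure que toutes les cellules rencontrant un même tronçon sont à distance au plus $1$ de la cellule pivot $\V(h_i)$. Cela vous oblige à étendre le lemme au cas $\dist_{\bar{\D}}\geq 2$, et votre justification est valable : le lemme \ref{lemme_nbr_intersection} est déjà énoncé sous l'hypothèse \og $\geq 2$ \fg{}, et dans le second cas de la preuve du lemme \ref{lemme_distance_cellules_disjointes} l'existence d'un premier germe à distance exactement $2$ le long du segment n'utilise que $\dist_{\bar{\D}}(\isome{{f_1}}(\l),\isome{{f_2}}(\l))\geq 2$, les cellules consécutivement traversées étant adjacentes donc la distance de graphe ne sautant que d'au plus $1$. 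Les deux arguments donnent une borne linéaire en $L/M$ ; le vôtre évite la construction récursive et la vérification de sa terminaison, au prix de cette extension à justifier. Un seul point de rédaction à corriger : votre chemin $\isome{f}(\l)-\isome{{h_i}}(\l)-\cdots-\isome{g}(\l)$ n'est pas nécessairement \emph{contenu} dans $\bar{\D}_{[c,c']}$ (deux cellules quasi-adjacentes $\V(h_i)$ et $\V(h_{i+1})$ ne partagent pas forcément un point du segment, donc l'arête correspondante peut ne pas appartenir à $\bigcup_x\bar{\D}_x$) ; c'est sans conséquence puisque le diamètre de $\bar{\D}_{[c,c']}$ est mesuré dans le graphe ambiant $\bar{\D}$, comme dans la condition \ref{critere_bow3} du critère de Bowditch et dans la preuve du texte, mais il vaut mieux ne pas affirmer la containance.
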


\begin{proof}
    Param\'etrons le segment $[c, c']$ par $\gamma:[0,1]\rightarrow [c,c']$ avec $\gamma(0) = c$ et $\gamma(1) = c'$.
    Nous construisons une suite de points $\{t_i\}$ de $[0,1]$, et une suite d'applications $f_i \in \Bir(\P^2)$, par le proc\'ed\'e de r\'ecurrence suivant. Initialisons en posant $t_0 = 0$, et en choisissant $f_0 \in \Bir(\P^2)$ tel que $c = \gamma(0) \in \V(f_0)$.
    Pour $i \geq 1$, supposons le point $t_{i-1} \in [0,1[$ construit et l'application $f_{i-1}$ choisie. Si pour tout $t\in[t_{i-1},1]$, toute application $f\in\Bir(\P^2)$ telle que $\gamma(t)\in\V(f)$ satisfait $\dist_{\bar{\D}}(\isome{{f_{i-1}}}(\l),\isome{{f_{i}}}(\l))\leq1$, alors nous posons $t_i=1$ et le proc\'ed\'e s'arr\^ete. Sinon nous posons : \[ t_i = \inf \Big\{t \in ]t_{i-1},1]\mid \exists f \in \Bir(\P^2), \gamma(t) \in \V(f) \text{ et } \dist_{\bar \D}(\isome{{f_{i-1}}}(\l), \isome{{f_i}}(\l))=2\Big\}.\]
    Ensuite, nous choisissons $f_i \in \Bir(\P^2)$ satisfaisant $\dist_{\bar \D}(\isome{{f_{i-1}}}(\l), \isome{{f_i}}(\l))=2$ et $\gamma(t_i) \in \V(f_i)$.
    Si $t_i < 1$, le lemme \ref{lemme_distance_cellules_disjointes} implique qu'il existe une constante universelle $M > 0$ telle que $d(\gamma(t_i), \gamma(t_{i-1})) \ge M$.
    En cons\'equence, apr\`es un nombre fini d'\'etapes le proc\'ed\'e s'arr\^ete.
    Plus pr\'ecis\'ement, la suite poss\`ede au plus $\lceil \frac{L}{M}\rceil+1$ termes.
    Posons pour tout $i$, $c_i=\gamma(t_i)$. Pour tout $i\geq 0$ les sommets du sous-graphe $\bar{\D}_{[c_i,c_{i+1}[}$ sont par construction \`a distance $1$ du sommet $\bar{f_i}$. Ainsi, le diam\`etre de ce sous-graphe est born\'e :
    \[\diam(\bar{\D}_{[c_i,c_{i+1}[})\leq 1,5.\]
    Nous obtenons finalement : \[\diam(\bar{\D}_{[c,c']})\leq \sum\limits_{i=0}^{\lceil \frac{L}{M}\rceil-1}\diam(\bar{\D}_{[c_i,c_{i+1}[})+\diam(\bar{\D}_{c'})\leq 1,5\times\bigg(\bigg\lceil \frac{L}{M}\bigg\rceil+1\bigg),\]
    ce qui ach\`eve la preuve de la proposition en posant $B(L)=1,5\times (\lceil \frac{L}{M}\rceil+1)$.
\end{proof}

    \subsubsection{Hyperbolicit\'e du graphe de quasi-adjacence}\label{subsubsection_hyperbolicite_adjacent}
    Nous pouvons maintenant appliquer le crit\`ere de Bowditch (Th\'eor\`eme \ref{thm_bow}) pour obtenir notre r\'esultat principal.

    \begin{thm}
        Le graphe de quasi-adjacence est hyperbolique au sens de Gromov.
    \end{thm}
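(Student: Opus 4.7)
The plan is to apply the Bowditch criterion (Théorème \ref{thm_bow}) to the family of sub-graphs $\bar{\D}(x,y) := \bar{\D}_{[x,y]}$ already introduced in Section \ref{subsubsection_construction_sous_graphes}, which are arc-connected by Corollaire \ref{cor_connexite_par_arcs}. The constant $h$ required by Bowditch will be produced from the Gromov-hyperbolicity constant $\delta$ of the ambient space $\orbl$ (which inherits hyperbolicity as a convex subset of $\H$) via the function $B$ of Proposition \ref{prop_diam_borne_segment}.

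Condition (\ref{critere_bow1}) is immediate: for any sommet $x = \isome{f}(\l)$, one has $x \in \bar{\D}_x \subseteq \bar{\D}_{[x,y]}$. For condition (\ref{critere_bow2}) (thin triangles), take sommets $x$, $y$, $z$ and a vertex $v$ of $\bar{\D}(x,y)$ coming from a cell $\V(h)$ meeting $[x,y]$ in a point $c$. By $\delta$-hyperbolicité de $\orbl$, il existe $c' \in [x,z] \cup [y,z]$ avec $\dist_\H(c, c') \le \delta$; n'importe quel sommet $u$ de $\bar{\D}_{c'}$ appartient à $\bar{\D}(x,z) \cup \bar{\D}(y,z)$. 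Les deux sommets $v$ et $u$ étant dans $\bar{\D}_{[c,c']}$, et le segment $[c,c']$ étant de longueur au plus $\delta$, la Proposition \ref{prop_diam_borne_segment} donne $\dist_{\bar{\D}}(v,u) \le B(\delta)$, ce qui fournit la condition (\ref{critere_bow2}) avec $h := B(\delta)$.

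La partie délicate est la condition (\ref{critere_bow3}): lorsque $\dist_{\bar{\D}}(x,y) \le 1$, c'est-à-dire que $\V(f)$ et $\V(g)$ sont quasi-adjacentes, il faut borner uniformément $\diam_{\bar{\D}}(\bar{\D}_{[x,y]})$. Le principal obstacle est que si $\V(f)$ et $\V(g)$ ne partagent qu'une classe $\xi$ au bord à l'infini, la distance hyperbolique $\dist_\H(x,y)$ peut être arbitrairement grande et la Proposition \ref{prop_diam_borne_segment} ne suffit pas à elle seule. J'aborderais ce point en montrant que toute cellule $\V(h)$ rencontrée par le segment géodésique $[x,y]$ contient également $\xi$ dans son adhérence à l'infini, donc est quasi-adjacente à la fois à $\V(f)$ et à $\V(g)$; chaque sommet de $\bar{\D}_{[x,y]}$ se retrouve alors à $\bar{\D}$-distance $1$ de $x$, ce qui donne une borne uniforme (de l'ordre de $3$) sur le diamètre. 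Le cas où $\V(f)\cap\V(g)$ n'est pas vide dans $\orbl$ se traite de manière analogue en choisissant un point commun $c$ et en découpant $[x,y]$ en deux sous-segments passant par $c$. Une fois les trois conditions établies, le critère de Bowditch fournit une constante de $\delta'$-hyperbolicité ne dépendant que de $h$, ce qui conclut.
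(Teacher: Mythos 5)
Votre plan suit la même route que la preuve du texte : mêmes sous-graphes $\bar{\D}(x,y)=\bar{\D}_{[x,y]}$, même traitement des conditions \ref{critere_bow1} et \ref{critere_bow2} du critère de Bowditch via la $\delta$-hyperbolicité de $\orbl$ et la proposition \ref{prop_diam_borne_segment}, et vous identifiez correctement que toute la difficulté est concentrée dans la condition \ref{critere_bow3}, ainsi que l'idée qui la résout dans le cas critique : montrer que toute cellule rencontrée par le segment $[x,y]$ possède la classe commune $\xi$ dans son adhérence à l'infini. Jusque-là, rien à redire.

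Le problème est que cette dernière affirmation --- le c\oe ur technique du théorème --- est annoncée (\og j'aborderais ce point en montrant que... \fg{}) mais jamais démontrée, et elle n'a rien d'automatique : a priori, une géodésique entre les centres de deux cellules ne partageant qu'un point à l'infini pourrait traverser des cellules qui ne voient pas ce point. Le texte ne l'établit d'ailleurs que dans le seul cas où elle est nécessaire, celui où $f_1^{-1}$ possède exactement $9$ points-base en position presque générale, de sorte que $\xi=s=3\l-\sum_{p\in\Bs(f_1^{-1})}e_p$ ; la preuve procède alors par récurrence le long de la chaîne de cellules fournie par la proposition \ref{prop_connexité}, en combinant le fait que les classes du segment ont pour support exactement $\Bs(f_1^{-1})$ (donc ne sont pas spéciales) avec \cite[Proposition 5.19]{Lovoronoi1}, \cite[Lemme 5.15]{Lovoronoi1}, \cite[Théorème 4.1]{Lovoronoi1} et \cite[Proposition 5.16]{Lovoronoi1}. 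Rien de cet argument ne figure dans votre proposition. Par ailleurs, votre réduction du cas où $\V(f)\cap\V(g)\neq\emptyset$ (\og découper $[x,y]$ en deux sous-segments passant par un point commun $c$ \fg{}) ne fonctionne pas telle quelle : le sous-graphe $\bar{\D}(x,y)$ est défini à partir de la géodésique $[x,y]$ et non de la concaténation $[x,c]\cup[c,y]$, et les longueurs $\dist(x,c)$ et $\dist(c,y)$ ne sont pas bornées puisque les cellules ne le sont pas. Le texte traite ce cas autrement : pour un germe de caractéristique Jonquières, le segment $[x,y]$ est contenu dans $\V(f)\cup\V(g)$ (\cite[Lemme 4.11]{Lovoronoi1}) ; pour les autres germes adjacents, le degré est majoré par $17$ (\cite[Corollaire 4.9]{Lovoronoi1}) et la proposition \ref{prop_diam_borne_segment} s'applique avec $L=17$. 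Il vous reste donc à démontrer l'affirmation clé du troisième cas et à corriger le traitement du cas adjacent.
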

    \begin{proof}
    Soit $h=\max(B(\delta),B(17))$ o\`u $\delta=\ln(1+\sqrt{2})$ est la constante de Gromov-hyperbolicit\'e de $\H$ et $B$ est l'application de la proposition \ref{prop_diam_borne_segment}. \`A tous sommets $\isome{f}(\l)$ et $\isome{g}(\l)$ nous associons les sous-graphes construits dans la sous-section \ref{subsubsection_construction_sous_graphes} : \[\bar{\D}(\isome{f}(\l),\isome{g}(\l)).\]
        Par construction, pour tous $f,g\in\Bir(\P^2)$, les sommets $\isome{f}(\l)$ et $\isome{g}(\l)$ appartiennent au sous-graphe $\bar{\D}(\isome{f}(\l),\isome{g}(\l))$. De plus d'apr\`es le corollaire \ref{cor_connexite_par_arcs}, ce sous-graphe $\bar{\D}(\isome{f}(\l),\isome{g}(\l))$ est connexe par arcs. Ceci prouve le point \hyperref[critere_bow1]{1)} du crit\`ere.

        Montrons que le point \hyperref[critere_bow2]{2)} est v\'erifi\'e. Consid\'erons trois sommets $\isome{{f_1}}(\l)$, $\isome{{f_2}}(\l)$ et $\isome{{f_3}}(\l)$ du graphe de quasi-adjacence. Soit $\isome{{f}}(\l)$ un sommet de $\bar{\D}(\isome{{f_1}}(\l),\isome{{f_2}}(\l))$. Par construction, il existe une classe $c_{f}\in[\isome{{f_1}}(\l),\isome{{f_2}}(\l)]$ telle que $\isome{{f}}(\l)$ est un sommet du sous-graphe $\bar{\D}_{c_{f}}$ associ\'e \`a $c_f$.
        L'espace $\orbl$ \'etant $\delta$-hyperbolique, il existe une classe $c_2\in [\isome{{f_1}}(\l),\isome{{f_3}}(\l)]\cup [\isome{{f_2}}(\l),\isome{{f_3}}(\l)]$ telle que $\dist(c_2,c_f)\leq  \delta$.
        Pour tout sommet $\isome{{g}}(\l)\in\bar{\D}_{c_2}\subset\bar{\D}(\isome{{f_2}}(\l),\isome{{f_3}}(\l))\cup\bar{\D}(\isome{{f_1}}(\l),\isome{{f_3}}(\l))$, nous avons d'apr\`es la proposition \ref{prop_diam_borne_segment} : \[\dist_{\bar{\D}}(\isome{{f}}(\l),\isome{{g}}(\l))\leq \diam(\bar{\D}_{[c_f,c_2]})\underset{\ref{prop_diam_borne_segment}}{\leq} B(\delta).\]
        Ceci prouve le point \hyperref[critere_bow2]{2)}.

        Int\'eressons-nous \`a pr\'esent au point \hyperref[critere_bow3]{3)}.
        Soient $\isome{{f_1}}(\l)$ et $\isome{{f_2}}(\l)$ deux sommets de $\bar{\D}_c$ \`a distance inf\'erieure ou \'egale \`a $1$. Si les deux sommets sont identiques, la condition \hyperref[critere_bow3]{3)} est imm\'ediatement v\'erifi\'ee puisque le sous-graphe associ\'e \`a ces deux sommets est r\'eduit au sommet $\isome{{f_1}}(\l)$. Supposons \`a pr\'esent que les deux sommets $\isome{{f_1}}(\l)$ et $\isome{{f_2}}(\l)$ sont \`a distance $1$. Nous voulons montrer que $\bar{\D}(\isome{{f_1}}(\l),\isome{{f_2}}(\l))$ est de diam\`etre born\'e. Quitte \`a faire agir $f^{-1}_2$, nous pouvons supposer que $f_2=\id$ et que $f_1$ est le germe d'une cellule quasi-adjacente \`a $\V(\id)$. D'apr\`es le corollaire \ref{cor_cellules_quasi_adjacentes}, l'application $f_1$ est soit une application de caract\'eristique Jonqui\`eres, soit elle n'est pas de caract\'eristique Jonqui\`eres mais son inverse poss\`ede au plus $8$ points-base en position presque g\'en\'erale, soit ce n'est pas une application de caract\'eristique Jonqui\`eres et son inverse poss\`ede exactement $9$ points-base en position presque g\'en\'erale. Nous distinguons ces trois cas.
        \begin{enumerate}[leftmargin=0pt, labelwidth=-24pt, listparindent=\parindent]
            \item[$\bullet$] Si l'application $f_1$ est de caract\'eristique Jonqui\`eres alors par le lemme \cite[Lemme 4.11]{Lovoronoi1}, nous avons l'inclusion : \[[\l,\isome{{f_1}}(\l)]\subset \V(\id)\cup\V(f_1)\] qui implique que  \[\diam\Big(\bar{\D}(\isome{{f_1}}(\l),\isome{\id}(\l))\Big)\leq 1,5.\]

            \item[$\bullet$] Consid\'erons le cas o\`u l'application $f_1$ n'est pas de caract\'eristique Jonqui\`eres et son inverse poss\`ede au plus $8$ points-base en position presque g\'en\'erale. C'est donc le germe d'une cellule adjacente \`a la cellule $\V(\id)$ (Corollaire \ref{cor_cellule_adjacente}).
            D'apr\`es \cite[Corollaire 4.9]{Lovoronoi1} le degr\'e de $f_1$ est inf\'erieur ou \'egal \`a $17$, ainsi la longueur du segment g\'eod\'esique $[\l,\isome{{f_1}}(\l)]$ est inf\'erieure ou \'egale \`a $17$. La proposition \ref{prop_diam_borne_segment} implique que le diam\`etre de $\bar{\D}(\isome{{f_1}}(\l),\isome{{\id}}(\l))$ est inf\'erieur ou \'egal \`a $B(17)$.
            Comme $h\geq B(17)$, le point \hyperref[critere_bow3]{3)} du crit\`ere de Bowditch est v\'erifi\'e dans ce cas-l\`a.
            \item[$\bullet$] Consid\'erons le dernier cas, celui o\`u l'application $f_1$ n'est pas de caract\'eristique Jonqui\`eres et son inverse  poss\`ede exactement $9$ points-base en position presque g\'en\'erale. C'est le germe d'une cellule quasi-adjacente non adjacente \`a $\V(\id)$ (Corollaire \ref{cor_cellules_quasi_adjacentes}). Le segment reliant $\l$ et $\isome{{f_1}}(\l)$ est constitu\'e de classes ayant exactement les $9$ points-base de $f_1^{-1}$ comme support. En particulier, les classes de ce segment g\'eod\'esique ne sont pas sp\'eciales.
            Montrons que pour tout sommet $\isome{{g}}(\l)$ de $\bar{\D}_{[\l,\isome{{f_1}}(\l)]}$ la cellule $\V(g)$ poss\`ede dans son bord \`a l'infini la classe \[s=3\l-\sum\limits_{p\in\Bs(f_1^{-1})}e_p\] et est par cons\'equent \`a distance $1$ du sommet $\isome{{\id}}(\l)$ dans le graphe de quasi-adjacence. Le diam\`etre de $\bar{\D}(\isome{{f_1}}(\l),\isome{{\id}}(\l))$ sera ainsi inf\'erieur ou \'egal \`a $1,5$.

            D'apr\`es la proposition \ref{prop_connexite}, il existe un entier $n\in\N$ et une suite finie $\{c_i\}_{0\leq i\leq n}$ de classes du segment $[\l,\isome{{f_1}}(\l)]$ telle que $c_0=\l$, $c_n=\isome{{f_1}}(\l)$ et \[\bar{\D}_{[\l,\isome{{f_1}}(\l)]}=\underset{1\leq i \leq n}{\bigcup}\bar{\D}_{c_i},\] o\`u l'intersection $\bar{\D}_{c_i}\cap\bar{\D}_{c_{i+1}}$ est non-vide pour tout $0\leq i\leq n-1$.
            Montrons par r\'ecurrence sur $n$ que tous les sommets de $\bar{\D}_{c_i}$ ont la classe $s$ dans leur bord \`a l'infini.
            C'est le cas pour $\isome{\id}(\l)$ et $\bar{\D}_{\l}=\{\isome{\id}(\l)\}$ par \cite[Proposition 5.19]{Lovoronoi1}. Supposons que le r\'esultat soit vrai pour $c_{i-1}$ et montrons-le pour $c_i$.

            Soit $f\in\Bir(\P^2)$ telle que $\isome{f}(\l)\in\bar{\D}_{c_{i-1}}\cap\bar{\D}_{c_i}$. Par hypoth\`ese de r\'ecurrence, la classe $s$ est dans le bord \`a l'infini de la cellule $\V(f)$. D'apr\`es la proposition \cite[Proposition 5.19]{Lovoronoi1} cela implique que $\Bs(f^{-1})$ est inclus dans le support de $s$. En utilisant le lemme \cite[Lemme 5.15]{Lovoronoi1}, nous obtenons que \[\isome{f}^{-1}(s)=3\l-\underset{q\in\Bs(f)}{\sum}e_q-\underset{\substack{p\in\supp(s)\\p\notin\Bs(f)}}{\sum}\isome{f}^{-1}(e_p)\] est une classe $9$-sym\'etrique appartenant au bord \`a l'infini de la cellule $\V(\id)$. Soit $g\in\bar{\D}_{c_i}$. La classe $\isome{f}^{-1}(c_i)$ appartient \`a l'intersection des cellules $\V(\id)\cap \V(f^{-1}\circ g)$.
            De plus, les supports de $c_i$ et de $s$ \'etant les m\^emes il en est de m\^eme des supports de $\isome{f}^{-1}(c_i)$ et de $\isome{f}^{-1}(s)$. Ainsi, d'apr\`es le th\'eor\`eme \cite[Th\'eor\`eme 4.1]{Lovoronoi1} les points-base de $f^{-1}\circ g$ sont inclus dans le support de $\isome{f}^{-1}(s)$, ce qui implique que la classe $\isome{f}^{-1}(s)$ est dans le bord \`a l'infini de la cellule $\V(f^{-1}\circ g)$, par la proposition \cite[Proposition 5.19]{Lovoronoi1}. En appliquant $f$ nous obtenons que la classe $s$ est dans le bord \`a l'infini de la cellule $\V(g)$ comme attendu. Ceci ach\`eve la preuve.
        \end{enumerate}

\vspace{-0.65cm}
    \end{proof}

\bibliographymark{R\'ef\'erences}

\providecommand{\bysame}{\leavevmode\hbox
to3em{\hrulefill}\thinspace}
\providecommand{\arXiv}[2][]{\href{https://arxiv.org/abs/#2}{arXiv:#1#2}}

\providecommand{\og}{``} \providecommand{\fg}{''}
\providecommand{\smfandname}{\&}
\providecommand{\smfedsname}{\'eds.}
\providecommand{\smfedname}{\'ed.}
\providecommand{\smfmastersthesisname}{M\'emoire}
\providecommand{\smfphdthesisname}{Th\`ese}

\end{document}